\let\csname ver@natbib.sty\endcsname\relax
\let\c@author\relax
\newrobustcmd{\MakeTitleCase}[1]{%
  \ifthenelse{\ifcurrentfield{booktitle}\OR\ifcurrentfield{booksubtitle}%
    \OR\ifcurrentfield{maintitle}\OR\ifcurrentfield{mainsubtitle}%
    \OR\ifcurrentfield{journaltitle}\OR\ifcurrentfield{journalsubtitle}%
    \OR\ifcurrentfield{issuetitle}\OR\ifcurrentfield{issuesubtitle}%
    \OR\ifentrytype{book}\OR\ifentrytype{mvbook}\OR\ifentrytype{bookinbook}%
    \OR\ifentrytype{booklet}\OR\ifentrytype{suppbook}%
    \OR\ifentrytype{collection}\OR\ifentrytype{mvcollection}%
    \OR\ifentrytype{suppcollection}\OR\ifentrytype{manual}%
    \OR\ifentrytype{periodical}\OR\ifentrytype{suppperiodical}%
    \OR\ifentrytype{proceedings}\OR\ifentrytype{mvproceedings}%
    \OR\ifentrytype{reference}\OR\ifentrytype{mvreference}%
    \OR\ifentrytype{report}\OR\ifentrytype{thesis}}
    {#1}
    {\MakeSentenceCase{#1}}}
\newtheorem{theorem}{Theorem}[section]
\newtheorem{lemma}[theorem]{Lemma}
\newtheorem{proposition}[theorem]{Proposition}
\newtheorem{corollary}[theorem]{Corollary}
\newtheorem{definition}[theorem]{Definition}
\newtheorem{remark}[theorem]{Remark}
\newenvironment{Proof}{\begin{proof}}{\end{proof}}
\newcommand{\dmatroid}{delta-matroid\xspace}
\newcommand{\dmatroids}{delta-matroids\xspace}
\newcommand{\sdif}{\mathop{\mathrm{\Delta}}}
\newcommand{\delm}{\setminus}
\newcommand{\ort}{\mathrm{Ort}}
\newcommand{\ortE}{\mathcal{E}}
\newcommand{\prodv}[2]{\pi_{#2}(#1)}
\renewcommand{\emptyset}{\varnothing}
\renewcommand{\vec}[1]{#1}
\begin{document}

%\date{}
%\maketitle
%\abstract{}

\begin{frontmatter}

\title{Orienting Transversals and Transition Polynomials of Multimatroids}

\author{Robert Brijder\fnref{fn1}}
\ead{robert.brijder@uhasselt.be}
\fntext[fn1]{R.B.\ is a postdoctoral fellow of the Research Foundation -- Flanders (FWO).}
\address{Hasselt University, Belgium}

\begin{abstract}
Multimatroids generalize matroids, delta-matroids, and isotropic systems, and transition polynomials of multimatroids subsume various polynomials for these latter combinatorial structures, such as the interlace polynomial and the Tutte-Martin polynomial.

We prove evaluations of the Tutte-Martin polynomial of isotropic systems from Bouchet directly and more efficiently in the context of transition polynomials of multimatroids. Moreover, we generalize some related evaluations of the transition polynomial of 4-regular graphs from Jaeger to multimatroids. These evaluations are obtained in a uniform and matroid-theoretic way. We also translate the evaluations in terms of the interlace polynomial of graphs. Finally, we give an excluded-minor theorem for the class of binary tight 3-matroids (a subclass of multimatroids) based on the excluded-minor theorem for the class of binary delta-matroids from Bouchet.
\end{abstract}

\begin{keyword}
multimatroid \sep isotropic system \sep transition polynomial \sep Tutte polynomial \sep interlace polynomial \sep matroid \sep 4-regular graph

\MSC 05B35
\end{keyword}

\end{frontmatter}

\section{Introduction}
Inspired by the theory of circuit partitions in 4-regular graphs, Bouchet developed the notions of \dmatroids \cite{mp/Bouchet87} and isotropic systems \cite{Bouchet/87/ejc/isotropicsys}. %Delta-matroids are used to study the restricted case of circuit partitions in 2-in, 2-out digraphs.
Delta-matroids generalize matroids, and like matroids, delta-matroids have two kinds of minor operations: deletion and contraction. On the other hand, isotropic systems
%, which deal with circuit partitions in 4-regular graphs in general,
have three types of minor operations (defined in a completely symmetric way). The notion of an isotropic system is however tied to binary spaces. With both notions in place, Bouchet then developed a matroid-theoretic notion with an arbitrary number $k$ of types of minor operations, called multimatroids or $k$-matroids \cite{DBLP:journals/siamdm/Bouchet97}, subsuming both delta-matroids and isotropic systems: delta-matroids are equivalent to $2$-matroids and isotropic systems form a strict subclass of $3$-matroids. In this way, matroids can be viewed as a subclass of $2$-matroids. Like isotropic systems, but unlike delta-matroids or matroids, the $k$ types of minor operations of multimatroids are defined in a completely symmetric way. In order to avoid clutter, we do not recall \dmatroids and isotropic systems in this paper, but instead formulate everything in terms of multimatroids.

We note that the theory of circuit partitions in 4-regular graphs is but one research topic that leads to multimatroids. As another example, embedded graphs (or ribbon graphs) have three kinds of minor operations resulting from the operations of partial duality and partial Petrie duality, which also naturally lead to $2$-matroids (using only partial duality) and $3$-matroids (using both partial duality and partial Petrie duality) \cite{EUJC/Traldi/transmat,CMNR/2016/embeddedDM}.

Various polynomials exist for each of the above combinatorial structures, and some polynomials are generalizations of others. For example, the global Tutte-Martin polynomial $M(S;y)$ of isotropic systems $S$ generalizes the Martin polynomial of 4-regular graphs, and the restricted Tutte-Martin polynomial $m(S,T;y)$ of isotropic systems ($T$ is a ``transversal'' that restricts $S$) generalizes the Tutte polynomial $T(M;x,y)$ of binary matroids $M$ on the diagonal (the case $x=y$), see \cite{TutteMartinOrientingVectors/Bouchet91}. Also, the transition polynomial of vf-safe delta-matroids \cite{BH/InterlacePolyDM/14} generalizes the Penrose polynomial \cite{EllisMonaghan/PenroseEmb/2011} and the Bollob\'as-Riordan polynomial \cite{BollobasRiordan:PolEmbedded} of embedded graphs, see \cite{CMNR/2016/embeddedDM}. The (multivariate) transition polynomial of multimatroids, defined in \cite{BenardBouchet/TutteMartin1997} and independently in \cite{BH/InterlacePolyDM/14}, has the above polynomials and others as specializations, see \cite{BH/InterlacePolyDM/14}. As such it provides a unifying matroid-theoretic framework to study these polynomials.

%These generalizations turn out to often retain many of the interesting properties (such as their recursive relations and a number of evaluations). In fact, generalizations may gain interesting properties: for example, the Penrose polynomial for binary delta-matroids has a characteristic recursive relation, while the Penrose polynomial for binary matroids has not.

In this paper we prove several results from \cite{TutteMartinOrientingVectors/Bouchet91} concerning the global and the restricted Tutte-Martin polynomial of isotropic systems in terms of the transition polynomial of multimatroids. One of these results is that for all isotropic systems $S$, we have $m(S,T;3) = k|m(S,T;-1)|$ for some odd integer $k$. A special case of this result is the well-known fact that for all binary matroids $M$, $T(M;3,3) = k|T(M;-1,-1)|$ for some odd integer $k$. We stress that the generalization from $T(M;y,y)$ to $m(S,T;y)$ is significant because, unlike the former, the latter generalizes the Martin polynomial of 4-regular graphs and also because it has a much more profound influence on the theory of embedded graphs.

We significantly simplify the proof of the evaluation of $m(S,T;3)$ from \cite{TutteMartinOrientingVectors/Bouchet91} by showing that the evaluation of $m(S,T;3)$ can be treated in a way that is similar to that of the more easily obtained evaluation of $M(S;4)$. Moreover, while isotropic systems intrinsically deal with binary vector spaces, using instead the matroid-theoretic context of multimatroids allows us to factor out properties that hold for larger classes of multimatroids. This singles out one crucial property of binary tight $3$-matroids (this class of multimatroids exactly corresponds to isotropic systems \cite{BT/IsotropicMatI}) that makes the evaluations work (cf.\ Theorem~\ref{thm:bin_one_ort}). This crucial property concerns the notion of an orienting transversal which corresponds to the notion of orienting vector from isotropic systems. Additionally, we generalize some related evaluations of the transition polynomial of 4-regular graphs \cite{Jaeger1990Transition} to the more general setting of binary tight $3$-matroids.

The interlace polynomial of a graph (we allow loops, but not multiple edges) \cite{Arratia2004199} is equivalent to the restricted Tutte-Martin polynomial, and the global interlace polynomial of a graph \cite{Aigner200411} is equivalent to the global Tutte-Martin polynomial, see \cite{DBLP:journals/dm/Bouchet05}. By taking inspiration from \cite{Arratia2004199}, we translate in Subsection~\ref{ssec:conseq_int_pol} a number of  transition-polynomial evaluations formulated in terms of multimatroid notions to evaluations of interlace polynomials formulated in terms of graph-theoretical notions (such as Eulerian induced subgraphs).

Finally, based on the excluded-minor theorem of binary \dmatroids from Bouchet \cite{Bouchet_1991_67}, we give in Section~\ref{sec:excl_minor_bin_t3m} an excluded-minor theorem of binary tight $3$-matroids and show that the unique (up to isomorphism) excluded minor for this class of multimatroids does not satisfy the crucial property mentioned above (cf.\ Theorem~\ref{thm:bin_one_ort}).

%This paper is organized as follows. In Sections~\ref{sec:mm} and \ref{sec:sh_mm_ortho} we recall some notions and results regarding multimatroids.
%In Section~\ref{sec:ort_transv} we consider orienting transversals.
%Section~\ref{sec:mm_pols} we recall a multivariate multimatroid polynomial $Q$ and prove a number of results regarding this polynomial.
%Section~\ref{sec:ort_bin_t3m} we consider orienting transversals for binary tight $3$-matroids.
%Section~\ref{sec:pol_evals} we study evaluations of $Q$ and their consequences for the Tutte and interlace polynomial.
%Section~\ref{sec:excl_minor_bin_t3m} excluded minor theorem.

%% ====================================
\section{Multimatroids} \label{sec:mm}
%% ====================================

We recall some notions and notation concerning multimatroids \cite{DBLP:journals/siamdm/Bouchet97}. We assume the reader is familiar with the basic notions concerning matroids, which can be found, e.g., in \cite{Welsh/MatroidBook,Oxley/MatroidBook-2nd}.

We take the terminology of multimatroids as developed by Bouchet \cite{DBLP:journals/siamdm/Bouchet97,DBLP:journals/combinatorics/Bouchet98,DBLP:journals/ejc/Bouchet01}.
A \emph{carrier} is a tuple $(U,\Omega)$ where $\Omega$ is a partition of a finite set $U$, called the \emph{ground set}. Every $\omega \in \Omega$ is called a \emph{skew class}. A \emph{transversal} $T$ of $\Omega$ is a subset of $U$ such that $|T \cap \omega| = 1$ for all $\omega \in \Omega$. A \emph{subtransversal} of $\Omega$ is a subset of a transversal of $\Omega$. We denote the set of transversals of $\Omega$ by $\mathcal{T}(\Omega)$, and the set of subtransversals of $\Omega$ by $\mathcal{S}(\Omega)$. The power set of a set $X$ is denoted by $2^X$.

Before we recall the notion of a multimatroid from \cite{DBLP:journals/siamdm/Bouchet97}, we define the notion of a semi-multimatroid.

%Note: pre-multimatroid is already reserved for when $(T,\mathcal{C}\cap 2^T)$ is not necessarily a matroid, like in the multimatroid interlace polynomials article. That is why the different name of semi-multimatroid is used.
\begin{definition} \label{def:semi-multimatroid}
A \emph{semi-multimatroid} $Z$ (described by its circuits) is a triple $(U,\Omega,\mathcal{C})$, where $(U,\Omega)$ is a carrier and $\mathcal{C} \subseteq \mathcal{S}(\Omega)$ such that for each $T \in \mathcal{T}(\Omega)$, $(T,\mathcal{C}\cap 2^T)$ is a matroid (described by its circuits).
\end{definition}

Terminology concerning $\Omega$ carries over to $Z$: hence we may, e.g., speak of a transversal of $Z$. We often simply write $U$ and $\Omega$ to denote the ground set and the partition of the semi-multimatroid $Z$ under consideration, respectively. Each $C \in \mathcal{C}$ in the definition of semi-multimatroid $Z$ is called a \emph{circuit} of $Z$. The family of circuits of $Z$ is denoted by $\mathcal{C}(Z)$. We say that $Z$ is a semi-multimatroid \emph{over} carrier $(U,\Omega)$.

The \emph{order} of a semi-multimatroid $Z = (U,\Omega,\mathcal{C})$ is $|\Omega|$. For any $X \subseteq U$, the \emph{restriction} of $Z$ to $X$, denoted by $Z[X]$, is the semi-multimatroid $(X,\Omega',\mathcal{C}\cap 2^{X})$ with $\Omega' = \{ \omega \cap X \mid \omega \cap X \neq \emptyset, \omega \in \Omega\}$. For a set $X$, the \emph{deletion} of $X$ from $Z$, denoted by $Z-X$, is $Z[U\setminus X]$. If $X$ is a subtransversal, then we identify $Z[X]$ with the matroid $(X,\mathcal{C}\cap 2^{X})$ since $\Omega' = \{\{u\} \mid u \in X\}$ captures no additional information. Note that $C$ is a circuit of $Z$ if and only if $C$ is a circuit of some matroid $Z[T]$ with $T \in \mathcal{T}(\Omega)$. Similarly, we say that $I \in \mathcal{S}(\Omega)$ is an \emph{independent set} of $Z$ if $I$ is an independent set of some matroid $Z[T]$. The family of independent sets of $Z$ is denoted by $\mathcal{I}(Z)$. The \emph{rank} of $S \in \mathcal{S}(\Omega)$ in $Z$, denoted by $r_Z(S)$, is the rank $r(Z[S])$ of the matroid $Z[S]$. The \emph{nullity} of $S \in \mathcal{S}(\Omega)$ in $Z$, denoted by $n_Z(S)$, is $n_Z(S) = |S| - r_Z(S)$, i.e., the nullity $n(Z[S])$ of the matroid $Z[S]$. A \emph{basis} $B$ of $Z$ is an element of $\mathcal{I}(Z)$ that is maximal with respect to inclusion. The family of bases of $Z$ is denoted by $\mathcal{B}(Z)$. Care must be taken regarding bases: a basis $B$ of a matroid $Z[T]$ is not necessarily a basis of $Z$.

For a function $f: X \to Y$, we define for $X' \subseteq X$, $f(X') = \cup_{x \in X'} \{f(x)\}$. For semi-multimatroids $Z_1 = (U_1,\Omega_1,\mathcal{C}_1)$ and $Z_2 = (U_2,\Omega_2,\mathcal{C}_2)$, we say that $\varphi: U_1 \to U_2$ is an \emph{isomorphism} from $Z_1$ to $Z_2$ if $\varphi$ is a one-to-one correspondence such that (1) for all $\omega \in \Omega_1$, we have $\varphi(\omega) \in \Omega_2$ and (2) for all $X \in \mathcal{S}(\Omega_1)$, $X \in \mathcal{C}_1$ if and only if $\varphi(X) \in \mathcal{C}_2$.

The \emph{minor} of $Z$ induced by $X \in \mathcal{S}(\Omega)$, denoted by $Z|X$, is the semi-multimatroid $(U',\Omega',\mathcal{C}')$, where $\Omega' = \{\omega \in \Omega \mid \omega \cap X = \emptyset\}$, $U' = \bigcup \Omega'$, and $\mathcal{C}' \subseteq \mathcal{S}(\Omega')$ such that $C \in \mathcal{C}'$ if and only if $C \in \mathcal{C}(Z[T \cup X] \slash X)$ for some $T \in \mathcal{T}(\Omega')$. (As usual, matroid contraction is denoted by $\slash$.) As shown in \cite{DBLP:journals/combinatorics/Bouchet98}, $Z|X$ is indeed a semi-multimatroid. For any set $U''$ disjoint from $X$, we have $(Z-U'')|X = (Z|X)-U''$. In case $X = \{u\}$ is a singleton, we also write $Z|u$ to denote $Z|\{u\}$. An element $u \in U$ is called \emph{singular} in $Z$ if $\{u\}$ is a circuit of $Z$.
%If $X \subseteq Y \subseteq U$, then $(Z|X)[Y \cap U'] = (Z[Y])|X$, where $U'$ is the ground set of $Z|X$.

It is unfortunate that the standard way (introduced by Bouchet in \cite{DBLP:journals/combinatorics/Bouchet98}) to denote a multimatroid minor (i.e., $Z|X$) clashes with the usual way to denote matroid restriction. Therefore, in order to minimize confusion with multimatroid minors, we denote in this paper the restriction of a matroid $M$ to a subset $X$ of its ground set by $M[X]$ (which is compatible with the notation of multimatroid restriction $Z[X]$).

Let us denote the ground set of a matroid $M$ by $E(M)$. Recall that for a matroid $M$ and disjoint subsets $X$ and $Y$ of $E(M)$, we have that the nullity $n_{M \slash X}(Y)$ of $Y$ in $M \slash X$ is equal to $n_M(X \cup Y) - n_M(X)$. Therefore, for all $S \in \mathcal{S}(\Omega')$, $n_{Z|X}(S) = n_Z(S \cup X) - n_Z(X)$.

A semi-multimatroid $Z$ is called \emph{nondegenerate} if $|\omega| > 1$ for all $\omega \in \Omega$.

Let $(U,\Omega)$ be a carrier. Then $p \subseteq \omega \in \Omega$ with $|p|=2$ is called a \emph{skew pair} of $\omega$. We now recall the notion of a multimatroid.
\begin{definition}
A semi-multimatroid $Z = (U,\Omega,\mathcal{C})$ is called a \emph{multimatroid} if for all $C_1,C_2 \in \mathcal{C}$, $C_1 \cup C_2$ does not include precisely one skew pair.
\end{definition}

Like matroids, semi-multimatroids and multimatroids can also be defined in terms of rank and independent sets, see \cite{DBLP:journals/siamdm/Bouchet97}.

Note that if $Z$ is a multimatroid, then each skew class contains at most one singular element. A skew class that contains a singular element is called \emph{singular}.

\begin{theorem} \label{thm:char_mm}
Let $Z = (U,\Omega,\mathcal{C})$ be a semi-multimatroid. Then the following conditions are equivalent:
\begin{enumerate}
\item \label{item:char_mm_def} $Z$ is a multimatroid,

\item \label{item:char_mm_coloop} for every $S \in \mathcal{S}(\Omega)$ and $\omega \in \Omega$ with $\omega \cap S = \emptyset$, there is at most one $x \in \omega$ with $\mathcal{C}(Z[S \cup \{x\}]) \neq \mathcal{C}(Z[S])$,

\item \label{item:char_mm_nullity} for every $S \in \mathcal{S}(\Omega)$ and $\omega \in \Omega$ with $\omega \cap S = \emptyset$, there is at most one $x \in \omega$ with $n_Z(S \cup \{x\}) \neq n_Z(S)$,

\item \label{item:char_mm_nullity_restr} for every $S \in \mathcal{S}(\Omega)$ with $|S| = |\Omega|-1$, there is at most one $x \in \omega$ with $n_Z(S \cup \{x\}) \neq n_Z(S)$, where $\omega \in \Omega$ is the unique skew class such that $\omega \cap S = \emptyset$, and

\item \label{item:char_mm_minor} every minor of $Z$ of order one has at most one circuit.
\end{enumerate}
\end{theorem}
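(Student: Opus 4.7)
The plan is to close a cycle $(\ref{item:char_mm_def}) \Rightarrow (\ref{item:char_mm_nullity}) \Rightarrow (\ref{item:char_mm_nullity_restr}) \Rightarrow (\ref{item:char_mm_minor}) \Rightarrow (\ref{item:char_mm_def})$ of implications while handling the two ``off-cycle'' equivalences $(\ref{item:char_mm_coloop}) \Leftrightarrow (\ref{item:char_mm_nullity})$ and $(\ref{item:char_mm_nullity_restr}) \Leftrightarrow (\ref{item:char_mm_minor})$ separately. The workhorse is a simple dictionary: for any subtransversal $S$ and $x \in \omega$ with $\omega \cap S = \varnothing$, the three statements ``$\mathcal{C}(Z[S \cup \{x\}]) = \mathcal{C}(Z[S])$'', ``$x$ is a coloop of the matroid $Z[S \cup \{x\}]$'' and ``$n_Z(S \cup \{x\}) = n_Z(S)$'' are three paraphrases of the single assertion that $x$ lies in no circuit of $Z[S \cup \{x\}]$. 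Reading off this dictionary at once gives $(\ref{item:char_mm_coloop}) \Leftrightarrow (\ref{item:char_mm_nullity})$.

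For $(\ref{item:char_mm_nullity_restr}) \Leftrightarrow (\ref{item:char_mm_minor})$, I would unpack $Z|S$ when $|S| = |\Omega| - 1$. The resulting semi-multimatroid has order one, its unique skew class is the class $\omega$ left unhit by $S$, and its only nonempty subtransversals are the singletons $\{x\}$ with $x \in \omega$. Such an $\{x\}$ is a circuit of $Z|S$ iff $x$ is a loop of $Z[S \cup \{x\}] \slash S$; by the nullity identity $n_{Z|S}(\{x\}) = n_Z(S \cup \{x\}) - n_Z(S)$ recalled just before the theorem, this happens precisely when $n_Z(S \cup \{x\}) \neq n_Z(S)$. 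Thus $(\ref{item:char_mm_nullity_restr})$ is literally the statement that every order-one minor of $Z$ has at most one circuit, which is $(\ref{item:char_mm_minor})$.

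The implication $(\ref{item:char_mm_def}) \Rightarrow (\ref{item:char_mm_nullity})$ I would prove by contrapositive: if $x_1, x_2 \in \omega$ both violate the nullity equality at $S$, the dictionary produces circuits $C_i \subseteq S \cup \{x_i\}$ of $Z$ with $x_i \in C_i$, and then $C_1 \cup C_2 \subseteq S \cup \{x_1, x_2\}$ contains $\{x_1, x_2\}$ as a skew pair and no other (since $S$ is a subtransversal disjoint from $\omega$), violating $(\ref{item:char_mm_def})$. The step $(\ref{item:char_mm_nullity}) \Rightarrow (\ref{item:char_mm_nullity_restr})$ is pure specialization. For the closing step $(\ref{item:char_mm_minor}) \Rightarrow (\ref{item:char_mm_def})$ I again argue by contrapositive: given circuits $C_1, C_2$ of $Z$ whose union contains a unique skew pair $\{x_1, x_2\} \subseteq \omega$ with $x_i \in C_i$, the set $X := (C_1 \cup C_2) \setminus \{x_1, x_2\}$ is a subtransversal disjoint from $\omega$, and I extend it to a subtransversal $S$ of size $|\Omega| - 1$ still avoiding $\omega$. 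Because $C_i \subseteq S \cup \{x_i\}$ is a circuit of $Z[S \cup \{x_i\}]$ passing through $x_i$, the element $x_i$ lies in the closure of $S$ in this matroid, so $\{x_i\}$ is a circuit of $Z|S$, producing two distinct circuits in an order-one minor.

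I expect the main obstacle to be the bookkeeping in $(\ref{item:char_mm_minor}) \Rightarrow (\ref{item:char_mm_def})$: verifying that $X$ is genuinely a subtransversal (which uses the hypothesis that $\{x_1, x_2\}$ is the \emph{only} skew pair in $C_1 \cup C_2$), that an extension $S$ of size $|\Omega| - 1$ avoiding $\omega$ is always available, and that the loop-to-circuit translation in $Z|S$ goes through cleanly. None of these individual checks is deep, but the multimatroid axiom really has to be tracked against the subtransversal structure here, and this is the step where the argument is most easily derailed.
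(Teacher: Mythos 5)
Your proposal is correct, and it takes a genuinely different route from the paper. The paper anchors the theorem to two external citations (for $(\ref{item:char_mm_def})\Leftrightarrow(\ref{item:char_mm_coloop})$ and $(\ref{item:char_mm_def})\Leftrightarrow(\ref{item:char_mm_nullity})$), proves $(\ref{item:char_mm_nullity})\Leftrightarrow(\ref{item:char_mm_nullity_restr})$ via the submodularity of the nullity function, and proves $(\ref{item:char_mm_coloop})\Leftrightarrow(\ref{item:char_mm_minor})$ directly (in both directions) by a circuit argument. You instead make the whole theorem self-contained: the dictionary you describe (\emph{not a coloop} $\Leftrightarrow$ \emph{nullity increments} $\Leftrightarrow$ \emph{circuit family changes}) handles $(\ref{item:char_mm_coloop})\Leftrightarrow(\ref{item:char_mm_nullity})$ in one stroke; the order-one minor unpacking shows $(\ref{item:char_mm_nullity_restr})$ and $(\ref{item:char_mm_minor})$ are literal restatements of each other; and the cycle $(\ref{item:char_mm_def})\Rightarrow(\ref{item:char_mm_nullity})\Rightarrow(\ref{item:char_mm_nullity_restr})\Rightarrow(\ref{item:char_mm_minor})\Rightarrow(\ref{item:char_mm_def})$ replaces the paper's use of submodularity for $(\ref{item:char_mm_nullity_restr})\Rightarrow(\ref{item:char_mm_nullity})$ by the circuit-extension trick in $(\ref{item:char_mm_minor})\Rightarrow(\ref{item:char_mm_def})$ (circuits witnessed in a small subtransversal $X$ remain circuits in any extension $S\supseteq X$, which is why you can always pad $X$ up to size $|\Omega|-1$ while avoiding $\omega$). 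What this buys: you avoid both the external citations and the submodularity lemma, at the cost of having to re-derive $(\ref{item:char_mm_def})\Rightarrow(\ref{item:char_mm_nullity})$ from scratch; what the paper's approach buys is brevity by leaning on prior work. One minor redundancy in your plan: you list $(\ref{item:char_mm_nullity_restr})\Leftrightarrow(\ref{item:char_mm_minor})$ as an off-cycle equivalence, but its forward half already appears on your cycle, so it is doing double duty. This does no harm, but it slightly obscures that your true new content is $(\ref{item:char_mm_coloop})\Leftrightarrow(\ref{item:char_mm_nullity})$, $(\ref{item:char_mm_def})\Rightarrow(\ref{item:char_mm_nullity})$, and $(\ref{item:char_mm_minor})\Rightarrow(\ref{item:char_mm_def})$.
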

\begin{Proof}
The equivalence of Conditions~\ref{item:char_mm_def} and \ref{item:char_mm_coloop} is shown in \cite{DBLP:journals/tcs/Brijder15}, and the equivalence of Conditions~\ref{item:char_mm_def} and \ref{item:char_mm_nullity} is shown in \cite[Proposition~5.4]{DBLP:journals/siamdm/Bouchet97}.

We now show the equivalence of Condition~\ref{item:char_mm_nullity} and Condition~\ref{item:char_mm_nullity_restr}.

Condition~\ref{item:char_mm_nullity} directly implies Condition~\ref{item:char_mm_nullity_restr}. Conversely, assume Condition~\ref{item:char_mm_nullity_restr} holds. Assume to the contrary that there is an $S \in \mathcal{S}(\Omega)$ and $\omega \in \Omega$ with $\omega \cap S = \emptyset$ such that $n_Z(S \cup \{x_1\}) = n_Z(S \cup \{x_2\}) = n_Z(S) + 1$ for distinct $x_1, x_2 \in \omega$. Let $S' \in \mathcal{S}(\Omega)$ with $S \subseteq S'$, $|S'| = |\Omega|-1$, and $\omega \cap S' = \emptyset$. By the matroid submodularity property of the nullity function applied to the sets $S'$ and $S \cup \{x_i\}$ with $i \in \{1,2\}$, we have $n_Z(S' \cup \{x_i\}) + n_Z(S) \geq n_Z(S') + n_Z(S \cup \{x_i\}) = n_Z(S') + n_Z(S) + 1$. Thus $n_Z(S' \cup \{x_i\}) = n_Z(S') + 1$ for $i \in \{1,2\}$, which contradicts Condition~\ref{item:char_mm_nullity_restr}.

% BELOW direct proof of regarding Condition~\ref{item:char_mm_nullity}.
%Condition~\ref{item:char_mm_coloop} directly implies Condition~\ref{item:char_mm_nullity}. Assume Condition~\ref{item:char_mm_nullity} holds. To show that Condition~\ref{item:char_mm_def} holds, we assume to the contrary that there are circuits $C_1$ and $C_2$ of $Z$ such that $C_1 \cup C_2$ includes precisely one skew pair $p = \{x_1,x_2\}$. Notice that $I_1 = C_1 \setminus p$ and $I_2 = C_2 \setminus p$ are independent sets of $Z$. Since $S = I_1 \cup I_2$ is a subtransversal of $Z$, we have by the submodularity inequality, $n_Z(S) + n_Z(I_i \cup \{x_i\}) \leq n_Z(S \cup \{x_1\}) + n_Z(I_i)$ for $i \in \{1,2\}$. Since $n_Z(I_i \cup \{x_i\}) = n_Z(I_1)$, we have $n_Z(S) = n_Z(S \cup \{x_i\})$ for $i \in \{1,2\}$ --- a contradiction.

Finally we show the equivalence of Condition~\ref{item:char_mm_coloop} and Condition~\ref{item:char_mm_minor}.

Assume Condition~\ref{item:char_mm_coloop} holds and let $S \in \mathcal{S}(\Omega)$ with $|S| = |\Omega|-1$. Let $\omega \in \Omega$ be the unique skew class disjoint from $S$. Since there is at most one $x \in \omega$ with $\mathcal{C}(Z[S \cup \{x\}]) \neq \mathcal{C}(Z[S])$, $Z|S$ is either of the form $(\omega,\{\omega\},\emptyset)$ or of the form $(\omega,\{\omega\},\{\{x\}\})$ for some $x \in \omega$. Hence Condition~\ref{item:char_mm_minor} holds.

Assume Condition~\ref{item:char_mm_minor} holds. Let $S \in \mathcal{S}(\Omega)$ and $\omega \in \Omega$ with $\omega \cap S = \emptyset$. Assume to the contrary that there are distinct $x,y \in \omega$ with $\mathcal{C}(Z[S \cup \{x\}]) \neq \mathcal{C}(Z[S]) \neq \mathcal{C}(Z[S \cup \{y\}])$. Let $S' \in \mathcal{S}(\Omega)$ with $S \subseteq S'$, $|S'| = |\Omega|-1$ and $S' \cap \omega = \emptyset$. Then $\mathcal{C}(Z[S' \cup \{x\}]) \neq \mathcal{C}(Z[S']) \neq \mathcal{C}(Z[S' \cup \{y\}])$. We have that both $\{x\}$ and $\{y\}$ are circuits of $Z|S'$, which is of order one. This contradicts Condition~\ref{item:char_mm_minor}. Consequently, Condition~\ref{item:char_mm_coloop} holds.
\end{Proof}
Note that by Condition~\ref{item:char_mm_minor} of Theorem~\ref{thm:char_mm}, the class of multimatroids is closed under minor operations (see also \cite{DBLP:journals/combinatorics/Bouchet98}). Also note that $\mathcal{C}(Z[S \cup \{x\}]) \neq \mathcal{C}(Z[S])$ in Theorem~\ref{thm:char_mm} is equivalent to saying that $x$ is not a coloop of matroid $Z[S \cup \{x\}]$.

%We recall the following result of \cite{DBLP:journals/ejc/Bouchet01}.
%\begin{proposition}[Proposition~5.5 of \cite{DBLP:journals/ejc/Bouchet01}] \label{prop:minor_singular}
%If a skew class $\omega$ of a multimatroid $Z$ is singular, then $Z|u = Z - \omega$ for all $u \in \omega$.
%\end{proposition}

It is shown in \cite{DBLP:journals/siamdm/Bouchet97}, that if multimatroid $Z$ is nondegenerate, then $\mathcal{B}(Z) = \mathcal{I}(Z) \cap \mathcal{T}(\Omega)$, i.e., every basis is a transversal.

\begin{definition} \label{def:tight}
A multimatroid $Z$ is called \emph{tight} if for every $S \in \mathcal{S}(\Omega)$ with $|S| = |\Omega|-1$, there is an $x \in \omega$ such that $\mathcal{C}(Z[S \cup \{x\}]) \neq \mathcal{C}(Z[S])$, where $\omega \in \Omega$ is the unique skew class such that $S \cap \omega = \emptyset$.
\end{definition}
By Condition~\ref{item:char_mm_coloop} of Theorem~\ref{thm:char_mm}, if $Z$ is tight, then there is exactly one such $x$ of Definition~\ref{def:tight}.

\begin{remark}
The definition of tight in this paper is slightly different from the definition in \cite{DBLP:journals/ejc/Bouchet01}: in \cite{DBLP:journals/ejc/Bouchet01} tightness also requires that $Z$ is nondegenerate. We have chosen to use this more general notion since various results concerning tightness also hold when $Z$ is degenerate.
\end{remark}

\begin{proposition}[Theorem~4.2b of \cite{DBLP:journals/ejc/Bouchet01}] \label{prop:char_tight}
Let $Z$ be a multimatroid. Then the following conditions are equivalent:
\begin{enumerate}
\item \label{item:char_tight_def_tight} $Z$ is tight,

\item \label{item:char_tight_nullity} for every $S \in \mathcal{S}(\Omega)$ with $|S| = |\Omega|-1$, there is an $x \in \omega$ with $n_Z(S \cup \{x\}) = n_Z(S)+1$ (and $n_Z(S\cup \{y\}) = n_Z(S)$ for all $y \in \omega \setminus\{x\}$), where $\omega \in \Omega$ is the unique skew class such that $S \cap \omega = \emptyset$,

\item \label{item:char_tight_emptyOne} no minor of $Z$ is of the form $(\omega,\{\omega\},\emptyset)$,

\item \label{item:char_tight_circ} every minor of $Z$ with nonempty ground set has a circuit, and

\item \label{item:char_tight_circOne} every minor of $Z$ of order one has a circuit.
%
%\item every minor of $Z$ of order one has exactly one circuit.
\end{enumerate}
\end{proposition}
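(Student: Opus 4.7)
The plan is to prove four equivalences anchored on Theorem~\ref{thm:char_mm}, using as a common translation the observation that for a subtransversal $S$ and $x \in \omega$ with $\omega \cap S = \emptyset$, the three conditions ``$\mathcal{C}(Z[S \cup \{x\}]) \neq \mathcal{C}(Z[S])$'', ``$x$ is not a coloop of the matroid $Z[S \cup \{x\}]$'', and ``$n_Z(S \cup \{x\}) = n_Z(S) + 1$'' coincide, the contrary case being that $x$ is a coloop, the nullity is preserved, and the circuit family is unchanged. With this dictionary in hand, (\ref{item:char_tight_def_tight}) $\Leftrightarrow$ (\ref{item:char_tight_nullity}) is just a rewrite of Definition~\ref{def:tight}, with the ``at most one'' parenthetical in (\ref{item:char_tight_nullity}) supplied by Theorem~\ref{thm:char_mm}(\ref{item:char_mm_nullity}).

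Next, for (\ref{item:char_tight_def_tight}) $\Leftrightarrow$ (\ref{item:char_tight_circOne}), I would observe that the minors $Z|S$ of order one are precisely those arising from $S \in \mathcal{S}(\Omega)$ with $|S| = |\Omega|-1$, whose unique skew class is the $\omega$ of Definition~\ref{def:tight}. Unrolling the definition of $Z|S$, $\{x\}$ is a circuit of $Z|S$ iff $x$ is a loop of $Z[S \cup \{x\}]\slash S$, i.e., $r_Z(S \cup \{x\}) = r_Z(S)$, which by the dictionary is the tightness condition at $S$. The equivalence (\ref{item:char_tight_circOne}) $\Leftrightarrow$ (\ref{item:char_tight_emptyOne}) is then immediate, since a semi-multimatroid of order one has no circuit iff it equals $(\omega,\{\omega\},\emptyset)$.

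Since every order-one minor has nonempty ground set, (\ref{item:char_tight_circ}) $\Rightarrow$ (\ref{item:char_tight_circOne}) is trivial; the substantive step is (\ref{item:char_tight_emptyOne}) $\Rightarrow$ (\ref{item:char_tight_circ}), which I would prove by contrapositive. Suppose a minor $Z' = Z|X$ has nonempty ground set but $\mathcal{C}(Z') = \emptyset$. Pick any subtransversal $S$ of $Z'$ with $|S| = |\Omega'|-1$, where $\Omega'$ is the partition of $Z'$, and let $\omega$ be the skew class of $Z'$ disjoint from $S$. Since $Z'$ has no circuit, each matroid $Z'[S \cup \{x\}]$ for $x \in \omega$ is free, so $n_{Z'}(S \cup \{x\}) = n_{Z'}(S)$. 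Applying the dictionary inside $Z'$, no $\{x\}$ is a circuit of $Z'|S$, whence $Z'|S = (\omega,\{\omega\},\emptyset)$. Since minors compose, this is also a minor of $Z$, contradicting (\ref{item:char_tight_emptyOne}).

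The main obstacle I anticipate is precisely this last step: one must bridge ``$Z'$ has no circuit'' and ``some order-one minor of $Z$ has the forbidden shape'', and for this the nullity/circuit dictionary has to be invoked inside the intermediate multimatroid $Z'$ rather than in $Z$ itself, which means one should keep careful track of the partition $\Omega'$ versus $\Omega$ and verify that the free-matroid conclusion propagates correctly down to the order-one minor $Z'|S$.
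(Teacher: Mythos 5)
Your proof is correct. The logical skeleton (establishing $(1) \Leftrightarrow (2)$, $(1) \Leftrightarrow (5) \Leftrightarrow (3)$, and $(4) \Leftrightarrow (5)$) matches the paper's, but you execute two of the steps differently. For $(1) \Leftrightarrow (3) \Leftrightarrow (5)$, the paper simply cites Bouchet's Theorem~4.2b, while you supply a direct proof: unrolling $Z|S$ to the single-element contraction $Z[S\cup\{x\}]\slash S$ and reading off when $\{x\}$ is a circuit. For $(1) \Rightarrow (2)$, the paper picks a circuit $C$ with $y\in C$ in $Z[S\cup\{y\}]$ and invokes submodularity of the nullity function, whereas you deduce $n_Z(S\cup\{x\}) = n_Z(S)+1$ directly from the coloop observation that the paper itself records after Theorem~\ref{thm:char_mm}; this is a more elementary route that avoids submodularity. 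Your $(3) \Rightarrow (4)$ by contrapositive, passing through the intermediate multimatroid $Z'$ and its order-one minor $Z'|S$, is the same idea as the paper's ``if $Z$ has no circuits, then no minor of $Z$ has circuits,'' just phrased explicitly via nullities instead of circuit preservation; both rely on the composition of minors, which is legitimate but worth noting as an ingredient since the paper leaves it tacit. Net effect: your proof buys self-containedness (no citation to Bouchet's Theorem~4.2b) and a single uniform ``dictionary'' used across all implications, at no extra cost.
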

\begin{Proof}
Condition~\ref{item:char_tight_nullity} directly implies Condition~\ref{item:char_tight_def_tight}. Conversely, let $Z$ be tight. Let $S \in \mathcal{S}(\Omega)$ with $|S| = |\Omega|-1$ and $\omega \in \Omega$ be the unique skew class such that $S \cap \omega = \emptyset$. Since $Z$ is tight, there is a $y \in \omega$ such that $\mathcal{C}(Z[S \cup \{y\}]) \neq \mathcal{C}(Z[S])$. Let $C \in \mathcal{C}(Z[S \cup \{y\}])$ with $y \in C$. Then $C \setminus \{y\}$ is an independent set of $Z$, and so $n_Z(C) = 1$. We have by the submodularity inequality of matroids, $n_Z(S) + n_Z(C) \leq n_Z(S \cup \{y\}) + n_Z(C \setminus \{y\})$. Since $n_Z(C) = 1$, we have $n_Z(S) + 1 = n_Z(S \cup \{y\})$. Thus Condition~\ref{item:char_tight_nullity} holds.

By Condition~\ref{item:char_mm_minor} of Theorem~\ref{thm:char_mm}, every minor of a multimatroid $Z$ of order one is either of the form $(\omega,\{\omega\},\emptyset)$ or of the form $(\omega,\{\omega\},\{\{u\}\})$ for some $u \in \omega$. Theorem~4.2b of \cite{DBLP:journals/ejc/Bouchet01} says that $Z$ is tight if and only if every minor of $Z$ of order one is of the latter form. This establishes the equivalence of Conditions~\ref{item:char_tight_def_tight}, \ref{item:char_tight_emptyOne}, and \ref{item:char_tight_circOne}.

Finally, Condition~\ref{item:char_tight_circ} trivially implies Condition~\ref{item:char_tight_circOne}. Conversely, if $Z$ has no circuits, then any minor of $Z$ also has no circuits. Thus Condition~\ref{item:char_tight_circOne} trivially implies Condition~\ref{item:char_tight_circ}.
\end{Proof}

By Proposition~\ref{prop:char_tight}, tightness is preserved under minors (see also \cite[Proposition~4.1]{DBLP:journals/ejc/Bouchet01}).

An $(\ell,k)$-\emph{carrier}, for nonnegative integer $\ell$ and positive integer $k$, is a carrier $(U,\Omega)$ such that $|\Omega| = \ell$ and for all $\omega \in \Omega$, $|\omega|=k$. A \emph{$k$-matroid} is a multimatroid over a $(\ell,k)$-carrier for some $\ell$. Note that an (ordinary) matroid corresponds to a $1$-matroid. While the notion of minor for 1-matroids corresponds to the notion of contraction for matroids, the usual notion of minor for matroids can be retrieved when considering a matroid as a special type of 2-matroid (see Section~\ref{sec:sh_mm_ortho} below).

%% ====================================
\section{Sheltered multimatroids and orthogonality} \label{sec:sh_mm_ortho}
%% ====================================

Let $Z = (U,\Omega,\mathcal{C})$ be a semi-multimatroid and $M$ be a matroid over $U$. We say that $Z$ is \emph{sheltered} by $M$ if $Z[T]=M[T]$ for all $T \in \mathcal{T}(\Omega)$. Note that $M$ and $(U,\Omega)$ together uniquely determine $Z$. However, not every semi-multimatroid (or even multimatroid) is sheltered by a matroid \cite{DBLP:journals/siamdm/Bouchet97}. Also note that if $Z$ is sheltered by $M$ and $U' \subseteq U$, then $Z[U']$ is sheltered by $M[U']$. Moreover, if $Z$ is sheltered by $M$ and $X \in \mathcal{S}(\Omega)$, then $Z|X$ is sheltered by $M\slash X \delm Y$, where $Y = \{ u \in U \mid u \in \omega \setminus X, \omega \cap X \neq \emptyset, \omega \in \Omega\}$. We say that a semi-multimatroid $Z$ is \emph{representable} over some field $\mathbb{F}$ if there is a matroid representable over $\mathbb{F}$ that shelters $Z$ \cite{BT/IsotropicMatI}. Note that representability is preserved under restriction and taking minors. Also note that the above definition of representability for 1-matroids corresponds to the usual definition of representability for matroids. We say that a semi-multimatroid $Z$ is \emph{binary} or \emph{quaternary} if $Z$ is representable over $GF(2)$ or $GF(4)$, respectively.

If $M$ is a matroid and $f: E(M) \to X$ is an injective function, then $f(M)$ denotes the matroid obtained from $M$ by renaming each element $e \in E(M)$ to $f(e)$.
\begin{definition} \label{def:free_sum}
Let $\sigma = (M_1,\ldots,M_k)$ be a sequence of matroids with a common ground set $E$. For each $i \in I = \{1,\ldots,k\}$, let $\varphi_i$ be the function which sends every $e \in E$ to $(e,i)$. For each $e\in E$, let $\omega_e = \{(e,i) \mid i \in I\}$. Let $\Omega = \{ \omega_e \mid e \in E \}$ and $U = \bigcup \Omega$. Then the semi-multimatroid over $(U,\Omega)$ that is sheltered by the direct sum of the matroids $\varphi_i(M_i)$, $i \in I$, is called the \emph{free sum} of $\sigma$.
\end{definition}

For matroids $M_1$ and $M_2$ over some common ground set $E$, we say that $M_1$ and $M_2$ are \emph{orthogonal} if for all $C \in \mathcal{C}(M_1)$ and $C' \in \mathcal{C}(M_2)$ we have $|C \cap C'| \neq 1$.

\begin{remark}
We remark that orthogonality is closely related to the notion of a ``strong map''. We have that matroids $M_1$ and $M_2$ are orthogonal if and only if $M_1$ is a strong map of $M_2^*$, denoted by $M_1 \to M_2^*$. By symmetry of orthogonality, $M_1 \to M_2^*$ if and only if $M_2 \to M_1^*$.
\end{remark}

\begin{theorem}
Let $\sigma = (M_1,\ldots,M_k)$ be a sequence of matroids with a common ground set $E$. Then the free sum of $\sigma$ is a multimatroid (in fact, a $k$-matroid) if and only if the matroids of $\sigma$ are mutually orthogonal.
\end{theorem}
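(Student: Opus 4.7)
The plan is to invoke Condition~\ref{item:char_mm_minor} of Theorem~\ref{thm:char_mm}, reducing the problem to analyzing order-one minors of the free sum. Since every skew class $\omega_e$ already has cardinality $k$ by construction, the ``$k$-matroid'' claim is automatic once we know the free sum is a multimatroid, so the entire statement collapses to: the free sum $Z$ of $\sigma$ is a multimatroid iff every order-one minor of $Z$ has at most one circuit iff the $M_i$'s are mutually orthogonal.

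First I would parametrize the order-one minors. Such a minor has the form $Z|X$ where $X \in \mathcal{S}(\Omega)$ has size $|E|-1$; write $\omega_{e_0}$ for the unique skew class disjoint from $X$. Specifying $X$ is equivalent to specifying an ordered partition $E\setminus\{e_0\} = F_1 \sqcup \cdots \sqcup F_k$, via $F_i = \{e : (e,i) \in X\}$. Since $Z$ is sheltered by the direct sum of the $\varphi_i(M_i)$, the matroid $Z[T\cup X]$ for $T=\{(e_0,j)\} \in \mathcal{T}(\{\omega_{e_0}\})$ is the direct sum over $i\neq j$ of $\varphi_i(M_i[F_i])$ together with $\varphi_j(M_j[F_j \cup \{e_0\}])$; contracting $X$ yields a one-element matroid on $\{(e_0,j)\}$ whose single element is a loop exactly when $e_0$ is in the closure of $F_j$ in $M_j$. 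Hence $\{(e_0,j)\}$ is a circuit of $Z|X$ if and only if there is a circuit $C_j \in \mathcal{C}(M_j)$ with $e_0 \in C_j \subseteq F_j \cup \{e_0\}$.

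Now I would read off both directions. For the nontrivial direction, suppose the $M_i$'s are mutually orthogonal and that some order-one minor $Z|X$ has two circuits $\{(e_0,j)\},\{(e_0,j')\}$ with $j \neq j'$. By the claim above there are circuits $C_j \in \mathcal{C}(M_j)$, $C_{j'} \in \mathcal{C}(M_{j'})$ with $e_0 \in C_j \cap C_{j'}$, $C_j \setminus \{e_0\} \subseteq F_j$, $C_{j'} \setminus \{e_0\} \subseteq F_{j'}$; since $F_j$ and $F_{j'}$ are disjoint this forces $|C_j \cap C_{j'}| = 1$, contradicting orthogonality of $M_j$ and $M_{j'}$. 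Thus every order-one minor has at most one circuit and $Z$ is a multimatroid. Conversely, if $M_j$ and $M_{j'}$ fail to be orthogonal, pick circuits $C_j, C_{j'}$ with $C_j \cap C_{j'} = \{e_0\}$; then $C_j \setminus \{e_0\}$ and $C_{j'} \setminus \{e_0\}$ are disjoint, so one can construct a partition $(F_1,\ldots,F_k)$ of $E\setminus\{e_0\}$ with $C_j\setminus\{e_0\}\subseteq F_j$ and $C_{j'}\setminus\{e_0\}\subseteq F_{j'}$ (distribute the remaining elements arbitrarily). The associated $X$ witnesses an order-one minor $Z|X$ having two distinct circuits, so by Theorem~\ref{thm:char_mm} $Z$ is not a multimatroid.

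The main obstacle is really just Step~1: correctly identifying the circuits of $Z|X$ via the shelter. Once we unwind the definition of a minor for a semi-multimatroid sheltered by a matroid (namely $Z|X$ is sheltered by $M/X \setminus Y$ where $Y$ collects the unused elements of the touched skew classes), everything else is bookkeeping and the equivalence with orthogonality drops out immediately.
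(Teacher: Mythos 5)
Your proof is correct. The key step is sound: for the free sum $Z$ sheltered by $\bigoplus_i \varphi_i(M_i)$, and $X$ corresponding to the ordered partition $(F_1,\ldots,F_k)$ of $E\setminus\{e_0\}$, the matroid $Z[\{(e_0,j)\}\cup X]/X$ has $\{(e_0,j)\}$ as a circuit precisely when $(e_0,j)$ lies in the closure of $X$, and because $M[\{(e_0,j)\}\cup X]$ decomposes as a direct sum this reduces to $e_0 \in \mathrm{cl}_{M_j}(F_j)$, i.e.\ the existence of a circuit $C_j$ of $M_j$ with $e_0 \in C_j \subseteq F_j\cup\{e_0\}$. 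With that in hand, both directions are indeed immediate from the disjointness of the $F_i$'s. Your route differs from the paper's in two respects: the paper cites the if-direction (mutual orthogonality implies multimatroid) from Bouchet's Proposition~4.3 and only proves the only-if direction directly, and for that direction it appeals to Condition~\ref{item:char_mm_coloop} of Theorem~\ref{thm:char_mm} (the coloop characterization applied to the subtransversal $S = \varphi_i(C\setminus\{e\})\cup\varphi_j(C'\setminus\{e\})$) rather than Condition~\ref{item:char_mm_minor}. Your version has the advantage of being self-contained and of treating both implications uniformly through a single description of order-one minors; the paper's version is shorter because it offloads half the work to a citation. The underlying construction of the witnessing subtransversal in the only-if direction is essentially the same in both proofs.
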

\begin{Proof}
The if-direction is shown in \cite[Proposition 4.3]{DBLP:journals/combinatorics/Bouchet98}.

Conversely, let the free sum $\mathcal{Z}_\sigma$ of $\sigma$ be a multimatroid. Assume to the contrary that there are distinct matroid $M_i$ and $M_j$ of $\sigma$ that are not orthogonal. Thus, there is a circuit $C$ of $M_i$ and a circuit $C'$ of $M_j$ such that $|C \cap C'| = 1$. Let $C \cap C' = \{e\}$. Let $S = \varphi_i(C \setminus \{e\}) \cup \varphi_j(C' \setminus \{e\})$, where $\varphi_i$ and $\varphi_j$ are as in Definition~\ref{def:free_sum}. Then $S$ is a subtransversal of $\mathcal{Z}_\sigma$. Let $\omega_e$ be the skew class corresponding to $e$. Then $\varphi_i(C) \in \mathcal{C}(Z[S \cup \{\varphi_i(e)\}]) \neq \mathcal{C}(Z[S])$ and $\varphi_j(C') \in \mathcal{C}(Z[S \cup \{\varphi_j(e)\}]) \neq \mathcal{C}(Z[S])$. Since $\varphi_i(e), \varphi_j(e) \in \omega_e$ are distinct, we have a contradiction of Condition~\ref{item:char_mm_coloop} of Theorem~\ref{thm:char_mm}.
\end{Proof}

A \emph{transversal $k$-tuple} $\tau$ of $\Omega$ is a sequence $(T_1, \ldots, T_k)$ of $k$ mutually disjoint transversals of $\Omega$.

We recall from, e.g., \cite[Proposition~2.1.11]{Oxley/MatroidBook-2nd} that a matroid $M$ is orthogonal to its dual $M^*$. The free sum of $(M^*,M)$ is denoted by $\mathcal{Z}_M$. For a matroid $M$, we also define the transversal $2$-tuple $\tau(M) = (T_1,T_2)$ of $\Omega$ where $T_i = \{ (e,i) \mid e \in E(M) \}$ for $i \in \{1,2\}$. Note that $\mathcal{Z}_M[T_1]$ is isomorphic to $M^*$ and $\mathcal{Z}_M[T_2]$ is isomorphic to $M$. Note also that $\mathcal{Z}_{M^*}$ is isomorphic to $\mathcal{Z}_{M}$. It is shown in \cite[Corollary~5.5]{DBLP:journals/ejc/Bouchet01} that $\mathcal{Z}_{M}$ is tight for all matroids $M$. Also, $M$ and $\mathcal{Z}_{M}$ have the same number of bases. Indeed, a transversal $T$ of $\mathcal{Z}_{M}$ is a basis of $\mathcal{Z}_{M}$ if and only if $\varphi^{-1}_2(T \cap T_2)$ is a basis of $M$ (conversely, each $X \subseteq E(M)$ is of the form $\varphi^{-1}_2(T \cap T_2)$ for some unique transversal $T$ of $\mathcal{Z}_{M}$), see \cite[Corollary~4.6]{DBLP:journals/combinatorics/Bouchet98}.

Not every tight 2-matroid is isomorphic to $\mathcal{Z}_M$ for some matroid $M$ (it is easy to come up with examples of tight 2-matroids that are not even isomorphic to a free sum of two orthogonal matroids). Note that if $u \in T_1$ then $\mathcal{Z}_{M}|u = \mathcal{Z}_{M \delm u}$, and if $u \in T_2$, then $\mathcal{Z}_{M}|u = \mathcal{Z}_{M \slash u}$, see also \cite[Corollary~5.3]{DBLP:journals/combinatorics/Bouchet98}. Hence matroid deletion and contraction correspond to minors on 2-matroids of the form $\mathcal{Z}_{M}$. Also, we observe that $M$ is representable over some field $\mathbb{F}$ if and only if $\mathcal{Z}_{M}$ is representable over $\mathbb{F}$. In this way, the notions of minor operations and representability carry over naturally from matroids to tight 2-matroids and, indeed, multimatroids in general. Consequently, multimatroids \emph{generalize} matroids.

The following is shown in \cite[Theorem~13]{BH/InterlacePolyDM/14}.
\begin{proposition} [\cite{BH/InterlacePolyDM/14}] \label{prop:unique_tight_k+1_mm}
%Let $(U,\Omega)$ be a carrier and let $T \in \mathcal{T}(\Omega)$. If $Z$ is a nondegenerate multimatroid over carrier $(U\setminus T,\Omega')$ with $\Omega' = \{ \omega \setminus T \mid \omega \in \Omega \}$, then there is at most one tight multimatroid $Z'$ over $(U,\Omega)$ such that $Z'-T = Z$.
Let $Z_1 = (U,\Omega,\mathcal{C}_1)$ and $Z_2 = (U,\Omega,\mathcal{C}_2)$ be tight multimatroids with $|\omega| \geq 3$ for all $\omega \in \Omega$. Let $T \in \mathcal{T}(\Omega)$. If $Z_1 - T = Z_2 - T$, then $Z_1 = Z_2$.
\end{proposition}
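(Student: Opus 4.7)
The plan is to establish that $\mathcal{C}(Z_1[S]) = \mathcal{C}(Z_2[S])$ for every $S \in \mathcal{S}(\Omega)$, which is equivalent to $Z_1 = Z_2$. I will proceed by induction on $|S \cap T|$. The base case $|S \cap T| = 0$ is immediate, since $S$ is then a subtransversal of the carrier of $Z_i - T$ and the hypothesis gives the equality directly.

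For the inductive step, take $|S \cap T| = n \geq 1$, pick $t \in S \cap T$, set $S_0 = S \setminus \{t\}$, and let $\omega$ denote the skew class containing $t$. Since $|\omega| \geq 3$ and $|\omega \cap T| = 1$, the set $\omega \setminus \{t\}$ contains at least two elements $y_1, y_2$, all lying in $U \setminus T$. For each $y \in \omega \setminus \{t\}$ we have $|(S_0 \cup \{y\}) \cap T| = n - 1$, so by induction $\mathcal{C}(Z_1[S_0]) = \mathcal{C}(Z_2[S_0])$ and $\mathcal{C}(Z_1[S_0 \cup \{y\}]) = \mathcal{C}(Z_2[S_0 \cup \{y\}])$. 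By Condition~2 of Theorem~\ref{thm:char_mm}, at most one $x \in \omega$ satisfies $\mathcal{C}(Z_i[S_0 \cup \{x\}]) \neq \mathcal{C}(Z_i[S_0])$. If this distinguished element is, for some $i$, a $y \in \omega \setminus \{t\}$, then by the inductive equalities the same $y$ is distinguished for both $Z_1$ and $Z_2$, forcing $\mathcal{C}(Z_i[S_0 \cup \{t\}]) = \mathcal{C}(Z_i[S_0])$ and the desired equality.

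The remaining subcase is when every $y \in \omega \setminus \{t\}$ satisfies $\mathcal{C}(Z_i[S_0 \cup \{y\}]) = \mathcal{C}(Z_i[S_0])$; here $t$ is the only possible distinguished element, but we must still show both multimatroids agree on whether and how $t$ is distinguished. I would extend $S_0$ to a subtransversal $S_0^*$ with $|S_0^*| = |\Omega| - 1$, $S_0^* \cap \omega = \emptyset$, and $S_0^* \setminus S_0 \subseteq U \setminus T$ (possible because $|\omega' \setminus T| \geq 2$ for each $\omega' \in \Omega$). Tightness (Proposition~\ref{prop:char_tight}) identifies a unique distinguished element at $S_0^*$, the same for both $Z_i$ by induction applied to the candidates $S_0^* \cup \{y\}$. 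If the distinguished element lies in $\omega \setminus \{t\}$, then the supermodularity argument used in the proof of Theorem~\ref{thm:char_mm} forces $t$ not to be distinguished at the smaller set $S_0$, and we are done. Otherwise $t$ itself is the distinguished element at $S_0^*$ for both $Z_i$, and here I would invoke the multimatroid circuit-pair axiom to pin down the new circuit through $t$: for each transversal $T^{\dagger}$ obtained from $S_0^* \cup \{t\}$ by swapping some $e \in T \cap S_0^*$ with a non-$T$ element $f$ in the skew class of $e$ (so $|T^{\dagger} \cap T| < n$ and $Z_i[T^{\dagger}]$ is inductively determined), the axiom constrains the circuits of $Z_i[S_0^* \cup \{t\}]$ through $t$ in terms of circuits of $Z_i[T^{\dagger}]$ through $f$; combining these constraints over several such pivots should pin the new circuit down uniquely. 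The edge case $n = 1$, in which $T \cap S_0^* = \emptyset$ and there is nothing to pivot, is handled separately via tightness applied to the order-one minor $Z_i | S_0^*$.

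The main obstacle is precisely this final rigidification step: tightness together with the inductive hypothesis readily forces $Z_1$ and $Z_2$ to agree on whether new circuits through $t$ exist, but not obviously on their exact form. The hypothesis $|\omega| \geq 3$ is essential here because it provides the two simultaneous alternatives $y_1, y_2 \in \omega \setminus \{t\}$ needed to pair, via the multimatroid circuit-pair axiom, with two independent pivoted transversals; in the delta-matroid regime $|\omega| = 2$ this rigidity disappears and the analogous statement genuinely fails.
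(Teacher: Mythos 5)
Your base case, your ``some $y\in\omega\setminus\{t\}$ is distinguished'' subcase, and your submodularity argument (when the distinguished element at $S_0^*$ lies in $\omega\setminus\{t\}$) are all correct. However, there is a genuine gap in the remaining subcase, namely when $t$ is the distinguished element at $S_0^*$ for both $Z_1$ and $Z_2$. Knowing $n_{Z_i}(S_0^*\cup\{t\})=n_{Z_i}(S_0^*)+1$ gives you the nullity at the \emph{larger} transversal $S_0^*\cup\{t\}$, but nothing forces the nullity to be the same at the smaller set $S=S_0\cup\{t\}$: submodularity only gives $n_{Z_i}(S)\le n_{Z_i}(S_0)+1$, which is vacuous. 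The ``pivoting via the circuit-pair axiom'' is not carried out, and it is not clear that it could be; the root cause is that inducting solely on $|S\cap T|$ gives you no control over $n_{Z_i}(S\cup\{y\})$ when $y$ lies in a skew class \emph{disjoint} from $S$ (because $|(S\cup\{y\})\cap T|=|S\cap T|$), which is exactly the information you would need.

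The fix is to strengthen the induction rather than to look for a rigidification lemma. Prove $n_{Z_1}(S)=n_{Z_2}(S)$ for all $S\in\mathcal{S}(\Omega)$ by lexicographic induction on the pair $(|S\cap T|,\ |\Omega|-|S|)$; this determines both multimatroids. The base $|S\cap T|=0$ is your base case. In the inductive step with $|S\cap T|=n\ge 1$, split on whether $S$ is a transversal. If $|S|=|\Omega|$, run essentially your argument: take $t\in S\cap T$, $S_0=S\setminus\{t\}$, and its skew class $\omega$; by induction $n_{Z_1}(S_0)=n_{Z_2}(S_0)$ and $n_{Z_1}(S_0\cup\{y\})=n_{Z_2}(S_0\cup\{y\})$ for every $y\in\omega\setminus\{t\}$ (all these pairs are lexicographically smaller since $|S_0\cap T|=n-1$); tightness (Proposition~\ref{prop:char_tight}, Condition~2) says exactly one element of $\omega$ raises the nullity over $S_0$, and since the two multimatroids agree on all $y\in\omega\setminus\{t\}$, they must agree on whether that distinguished element is $t$, hence $n_{Z_1}(S)=n_{Z_2}(S)$. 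If $|S|<|\Omega|$, take any skew class $\omega'$ with $\omega'\cap S=\emptyset$ and choose two distinct $y_1,y_2\in\omega'\setminus T$ --- this is where $|\omega'|\ge 3$ is indispensable. By Condition~3 of Theorem~\ref{thm:char_mm}, at most one element of $\omega'$ raises the nullity over $S$, so $n_{Z_i}(S)=\min\bigl(n_{Z_i}(S\cup\{y_1\}),\ n_{Z_i}(S\cup\{y_2\})\bigr)$; both quantities on the right have index pair $(n,\ |\Omega|-|S|-1)$, lexicographically smaller, so the induction hypothesis gives $n_{Z_1}(S)=n_{Z_2}(S)$. This removes the problematic subcase entirely: tightness is used only at transversals, and the descent to proper subtransversals uses only the multimatroid axiom together with the two spare non-$T$ elements in a free skew class.
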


If for a nondegenerate multimatroid $Z$, there exists such a tight multimatroid $Z'$ such that $Z'-T=Z$ for some transversal $T$ of $Z'$, then we call $Z$ \emph{tightly extendable} and call $Z'$ a \emph{tight extension} of $Z$.

It is shown in \cite{BH/BicyclePenrose} that all $\mathcal{Z}_{M}$ with $M$ a quaternary matroid are tightly extendable. More specifically, it is shown in \cite{BH/BicyclePenrose} that the class of quaternary matroids is a strict subclass of the class of vf-safe \dmatroids, and vf-safe \dmatroids in turn correspond to tightly extendable 2-matroids by \cite[Theorem~16]{BH/InterlacePolyDM/14}.
\begin{proposition} [\cite{BH/BicyclePenrose}] \label{prop:quat_mat}
Let $M$ be a quaternary matroid. Let $T_i = \{ (e,i) \mid e \in E(M) \}$ for $i \in \{1,2,3\}$. Let $\omega_e = \{(e,i) \mid i \in \{1,2,3\}\}$ for all $e \in E(M)$. Let $\Omega = \{ \omega_e \mid e \in E(M) \}$ and $U = \bigcup \Omega$. Then there is a unique tight $3$-matroid $Z$ over $(U,\Omega)$ such that $Z - T_3 = \mathcal{Z}_{M}$.
\end{proposition}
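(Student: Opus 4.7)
The plan is to obtain existence from the tight-extendability of $\mathcal{Z}_M$ and uniqueness from Proposition~\ref{prop:unique_tight_k+1_mm}. These two pieces match up cleanly because $|\omega_e|=3$ for every $e\in E(M)$, so the hypothesis of Proposition~\ref{prop:unique_tight_k+1_mm} is satisfied, and $T_3$ is, by construction, a transversal of $\Omega$.

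For existence, I would invoke the chain of results cited just above the proposition. Since $M$ is quaternary, the $2$-matroid $\mathcal{Z}_M$ is quaternary (by the remark that $\mathcal{Z}_M$ is representable over $\mathbb{F}$ iff $M$ is). By the result of \cite{BH/BicyclePenrose} that quaternary matroids lie in the class of vf-safe delta-matroids, and by \cite[Theorem~16]{BH/InterlacePolyDM/14} identifying vf-safe delta-matroids with tightly extendable $2$-matroids, $\mathcal{Z}_M$ is tightly extendable. Unpacking the definition of tightly extendable given just above Proposition~\ref{prop:quat_mat}, this yields a tight $3$-matroid $Z'$ over some carrier $(U',\Omega')$ and a transversal $T'$ of $\Omega'$ with $Z'-T'=\mathcal{Z}_M$. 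Composing with the obvious bijection that is the identity on $E(M)\times\{1,2\}$ and sends the element of $T'$ in the skew class containing $(e,1),(e,2)$ to $(e,3)$, I transport $Z'$ to a tight $3$-matroid $Z$ over $(U,\Omega)$ with $Z-T_3 = \mathcal{Z}_M$, as required.

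For uniqueness, suppose $Z_1$ and $Z_2$ are two tight $3$-matroids over $(U,\Omega)$ with $Z_1-T_3 = Z_2-T_3 = \mathcal{Z}_M$. Since $|\omega|=3\geq 3$ for every $\omega\in\Omega$ and $T_3\in\mathcal{T}(\Omega)$, Proposition~\ref{prop:unique_tight_k+1_mm} applies and gives $Z_1 = Z_2$.

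The only real content is the existence step; the uniqueness is essentially immediate from Proposition~\ref{prop:unique_tight_k+1_mm}. The main (hidden) obstacle is already absorbed into the quoted results: producing an explicit tight $3$-matroid structure from a quaternary matroid requires the vf-safe delta-matroid machinery of \cite{BH/BicyclePenrose,BH/InterlacePolyDM/14}, and once those are invoked as black boxes the argument reduces to a bookkeeping step on skew classes to match the indexing scheme of the statement.
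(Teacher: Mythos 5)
Your proof is correct and matches exactly the argument the paper lays out (implicitly) in the paragraph preceding the proposition: existence comes from the fact, cited to \cite{BH/BicyclePenrose} and \cite[Theorem~16]{BH/InterlacePolyDM/14}, that $\mathcal{Z}_M$ is tightly extendable when $M$ is quaternary, and uniqueness follows from Proposition~\ref{prop:unique_tight_k+1_mm} applied with $T=T_3$, whose hypotheses ($|\omega|=3\ge 3$, $T_3\in\mathcal{T}(\Omega)$) hold by construction. The paper states this proposition as a citation rather than proving it, so there is no paper proof to compare against in detail; but the supporting discussion before the proposition is precisely the existence-via-tight-extendability plus uniqueness-via-Proposition~\ref{prop:unique_tight_k+1_mm} route you take, including the routine relabeling of the tight extension's third transversal to $T_3$.
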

We denote $Z$ of Proposition~\ref{prop:quat_mat} by $\mathcal{Z}_{M,3}$, where subscript $3$ is to indicate that $Z$ is a $3$-matroid (unlike $\mathcal{Z}_{M}$). The quaternary matroid $\varphi_3^{-1}(\mathcal{Z}_{M,3}[T_3])$, where $\varphi_3$ is as in Definition~\ref{def:free_sum}, is the so-called \emph{bicycle matroid} of $M$, see \cite{BH/BicyclePenrose} for a definition. The nullity of the bicycle matroid is called the \emph{bicycle dimension} of $M$.

We remark that Proposition~\ref{prop:quat_mat} holds more generally for a larger class of quaternary 2-matroids, see \cite{BH/BicyclePenrose} where this larger class is defined in \dmatroid terminology. Delta-matroids, which we do not recall in this paper, are essentially equivalent to 2-matroids, however the (usual) notion of representability of \dmatroids is more restrictive.

%% ====================================
\section{Orienting transversals} \label{sec:ort_transv}
%% ====================================
% First lemma can be stated for semi-multimatroids, and third result (second lemma) can be more clear why multimatroid is assumed instead of semi-multimatroid.
%
Let $Z$ be a nondegenerate multimatroid. We define $\ort(Z) = \{ T \in \mathcal{T}(\Omega) \mid Z-T \mbox{ is tight} \}$. The transversals in $\ort(Z)$ are called \emph{orienting} in $Z$. The name ``orienting'' is borrowed from the corresponding notion of ``orienting vector'' for isotropic systems defined in \cite{TutteMartinOrientingVectors/Bouchet91}. While we do not define isotropic systems here, we mention that they are equivalent to binary tight $3$-matroids, see \cite{BT/IsotropicMatI}. The notion of orienting vector is in turn borrowed from the corresponding notion of ``Eulerian orientation'' \cite{Vergnas1988367} in the case where the isotropic system can be obtained from circuit partitions of a 4-regular graph.

Note that if $Z$ is the empty multimatroid (i.e., $U = \Omega = \mathcal{C} = \emptyset$), then $\ort(Z) = \{ \emptyset \}$. As another example, we have that $T_3 \in \ort(\mathcal{Z}_{M,3})$ for all quaternary matroids $M$ (with $T_3$ as in Proposition~\ref{prop:quat_mat}), since $\mathcal{Z}_{M,3} - T_3 = \mathcal{Z}_{M}$ is tight.

We now collect a number of results concerning orienting transversals.

%Remove theorem below, because not really needed?
\begin{lemma} \label{lem:eul_char_two}
Let $Z$ be a nondegenerate multimatroid and let $T \in \mathcal{T}(\Omega)$. Then $T \in \ort(Z)$ if and only if for every $S \in \mathcal{S}(\Omega)$ with $|S| = |\Omega|-1$ and $S \cap T = \emptyset$, $Z|S$ has a circuit disjoint from $T$.
\end{lemma}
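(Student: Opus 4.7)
The strategy is to reduce the lemma to Condition~\ref{item:char_tight_circOne} of Proposition~\ref{prop:char_tight}, which characterizes tightness through the existence of a circuit in every order-one minor. Since $T \in \ort(Z)$ means by definition that $Z - T$ is tight, I would apply this criterion to $Z - T$ and then translate the resulting condition back into the language of $Z$.

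The first step is to parameterize the order-one minors of $Z - T$. Nondegeneracy of $Z$ (i.e., $|\omega| \geq 2$ for every $\omega \in \Omega$) ensures that each skew class $\omega \setminus (T \cap \omega)$ of $Z - T$ is nonempty, so $Z - T$ has the same order $|\Omega|$ as $Z$. Its order-one minors are therefore of the form $(Z - T)|S$, where $S$ is a subtransversal of $Z - T$ with $|S| = |\Omega| - 1$. Such $S$ are precisely the subtransversals of $Z$ satisfying $|S| = |\Omega| - 1$ and $S \cap T = \emptyset$, which is exactly the quantification appearing in the lemma.

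Next I would invoke the identity $(Z - U'')|X = (Z|X) - U''$, stated earlier in the excerpt whenever $U''$ is disjoint from $X$, with $U'' = T$ and $X = S$. This rewrites $(Z - T)|S$ as $(Z|S) - T$. The minor $Z|S$ has ground set equal to the unique skew class $\omega \in \Omega$ disjoint from $S$, and $T$ meets $\omega$ in exactly one element, say $t$. Since deletion of $T$ is just restriction to the complementary set, the circuits of $(Z|S) - T$ are exactly the circuits of $Z|S$ that do not contain $t$, i.e., the circuits of $Z|S$ disjoint from $T$. Combining, $Z - T$ is tight iff for every such $S$ the order-one minor $Z|S$ has a circuit disjoint from $T$, which is the statement of the lemma.

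The proof is essentially a bookkeeping exercise, so no step is a genuine obstacle; the one place where attention is required is invoking nondegeneracy of $Z$ to conclude that the order of $Z - T$ is still $|\Omega|$, so that its order-one minors are indexed by subtransversals of size $|\Omega| - 1$ rather than by some smaller number.
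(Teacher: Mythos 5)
Your proof is correct and follows essentially the same route as the paper's: translate $T\in\ort(Z)$ into tightness of $Z-T$, invoke the order-one-minor characterization of tightness, and use the commutation $(Z-T)|S = (Z|S)-T$ to pull the condition back to $Z$. The one thing you do that the paper leaves implicit is spell out why nondegeneracy guarantees that $Z-T$ has the same order as $Z$, which is a worthwhile clarification but not a substantive difference.
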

\begin{Proof}
We have $T \in \ort(Z)$ if and only if $Z-T$ is tight. Let $\Omega'$ be the set of skew classes of $Z-T$. Then, $Z-T$ is tight if and only if for every $S \in \mathcal{S}(\Omega')$ with $|S| = |\Omega|-1$, $(Z-T)|S$ contains a circuit. We observe that: (1) for $S \in \mathcal{S}(\Omega)$, we have $S \in \mathcal{S}(\Omega')$ if and only if $S \cap T = \emptyset$, and (2) $(Z-T)|S = (Z|S)-T$ contains a circuit if and only if $Z|S$ has a circuit disjoint from $T$. This obtains the result.
\end{Proof}

%Characterization below suggests a generalization of the definition of $\ort(Z)$ from transversals to arbitrary subsets of $U$.

The following result provides a characterization of the notion of orienting in the case where $Z$ is tight.
\begin{theorem} \label{thm:eul_char}
Let $Z$ be a tight nondegenerate multimatroid and let $T \in \mathcal{T}(\Omega)$. Then $T \in \ort(Z)$ if and only if for all circuits $C$ of $Z$, $|C \cap T| \neq 1$.
\end{theorem}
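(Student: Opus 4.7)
The plan is to reduce the statement to the characterization in Lemma~\ref{lem:eul_char_two} and then use tightness together with the multimatroid property (Condition~\ref{item:char_mm_minor} of Theorem~\ref{thm:char_mm}) to identify the circuits of the order-one minors $Z|S$ with the circuits of $Z$ meeting $T$ in exactly one element. I will prove both directions by contrapositive.

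For the ``only if'' direction, suppose $C$ is a circuit of $Z$ with $C \cap T = \{t\}$, and let $\omega$ be the skew class containing $t$. Set $C' = C \setminus \{t\}$; since $t$ is the only element of $C$ in $T$ and the only element of $C$ in $\omega$, $C'$ is a subtransversal disjoint from both $T$ and $\omega$. Using nondegeneracy ($|\omega'| > 1$ for every $\omega'$), I will extend $C'$ to an $S \in \mathcal{S}(\Omega)$ with $|S| = |\Omega| - 1$, $S \cap T = \emptyset$, and $S \cap \omega = \emptyset$. By tightness (Proposition~\ref{prop:char_tight}, Condition~\ref{item:char_tight_circ}) and Condition~\ref{item:char_mm_minor} of Theorem~\ref{thm:char_mm}, the order-one minor $Z|S$ has exactly one circuit, which is a singleton $\{x\}$ with $x \in \omega$. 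To conclude that $x = t$, apply the supermodularity of nullity to $S$ and $C$: since $n_Z(C) = 1$, $n_Z(C') = 0$, and $S \cap C = C'$, $S \cup C = S \cup \{t\}$, one gets $n_Z(S \cup \{t\}) \geq n_Z(S) + 1$, so $\{t\}$ is a circuit of $Z|S$. Hence $x = t \in T$ and $Z|S$ has no circuit disjoint from $T$, which by Lemma~\ref{lem:eul_char_two} gives $T \notin \ort(Z)$.

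For the ``if'' direction, suppose $T \notin \ort(Z)$. By Lemma~\ref{lem:eul_char_two} there is $S \in \mathcal{S}(\Omega)$ with $|S| = |\Omega| - 1$ and $S \cap T = \emptyset$ such that $Z|S$ has no circuit disjoint from $T$. Let $\omega$ be the unique skew class disjoint from $S$. By tightness together with Condition~\ref{item:char_mm_minor} of Theorem~\ref{thm:char_mm}, $Z|S$ has exactly one circuit, say $\{x\}$, with $x \in \omega$; the hypothesis forces $x \in T$. Unwinding the definition of minor, $\{x\}$ being a circuit of $Z|S$ means $x$ is a loop of $Z[S \cup \{x\}] \slash S$, so $x$ is dependent on $S$ in the matroid $Z[S \cup \{x\}]$, hence there is a circuit $C$ of $Z[S \cup \{x\}]$ (and therefore of $Z$) containing $x$. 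Since $C \subseteq S \cup \{x\}$, $S \cap T = \emptyset$, and $x \in T$, we obtain $C \cap T = \{x\}$, contradicting the assumption that $|C \cap T| \neq 1$ for every circuit.

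The main obstacle is the direction that turns a hypothesis about all circuits of $Z$ into a statement about the unique singleton circuit of $Z|S$; the key trick is the supermodularity computation with $C$ and $S$, mirroring the submodularity argument used in the proof of Proposition~\ref{prop:char_tight}. Once that is in place, both directions follow by the same bookkeeping: the singleton circuit of the order-one minor $Z|S$ is exactly the ``$T$-side'' element of a circuit $C$ of $Z$ with $|C \cap T| = 1$, so the existence of such a $C$ is the same thing as the existence of an $S$ witnessing the failure of the criterion in Lemma~\ref{lem:eul_char_two}.
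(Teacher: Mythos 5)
Your proof is correct and follows essentially the same route as the paper: both directions reduce to analyzing the unique singleton circuit of an order-one minor $Z|S$ (via Lemma~\ref{lem:eul_char_two} and the tightness/multimatroid dichotomy), identifying it with the element $x$ of a circuit $C$ with $|C \cap T| = 1$. The only differences are cosmetic — you spell out the supermodularity step showing $n_Z(S \cup \{t\}) = n_Z(S) + 1$ where the paper simply asserts that $\{x\}$ is a circuit of $Z|S$, and you route the converse through Lemma~\ref{lem:eul_char_two} while the paper unwinds the tightness of $Z-T$ directly; neither change alters the substance of the argument.
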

\begin{Proof}
Let $T \in \ort(Z)$. Then $Z-T$ is tight. Assume to the contrary that there is a circuit $C$ of $Z$ with $|C \cap T| = 1$. Let $C \cap T = \{x\}$. Consider now a subtransversal $S$ with $C \setminus \{x\} \subseteq S$, $S \cap T = \emptyset$, and $|S| = |\Omega|-1$. Note that such $S$ exists because $(C \setminus \{x\}) \cap T = \emptyset$. Then $Z|S$ contains the circuit $\{x\}$. Since $Z|S$ has only one skew class, there are no other circuits in $Z|S$. Since $\{x\}\subseteq T$, this contradicts the only-if implication of Lemma~\ref{lem:eul_char_two}.
%Then, by Proposition~\ref{prop:sssp_mm}, $\mathcal{C}(Z[S]) \subsetneq \mathcal{C}(Z[S \cup \{x\}])$ and therefore $\mathcal{C}(Z[S]) = \mathcal{C}(Z[S \cup \{y\}])$ for all $y \in \omega \setminus \{x\}$ where $\omega$ is the skew class of $Z$ containing $x$. Since $S$ is a subtransversal of $Z-T$ with $|S| = |\Omega|-1$ and $\omega \setminus \{x\}$ a skew class of $Z-T$, we obtain that $Z-T$ is not tight --- a contradiction.

Conversely, assume that for all circuits $C$ of $Z$, $|C \cap T| \neq 1$.
%
%Assume to the contrary that $T \notin \mathcal{T}(\Omega)$. By Theorem~\ref{thm:eul_char_two}, there is an $S \in \mathcal{S}(\Omega)$ with $|S| = |\Omega|-1$ and $S \cap T = \emptyset$, such that $Z|S$ has no circuit disjoint from $T$. Since $Z$ is tight, $Z|S$ has a circuit $C$. Since $Z|S$ has only one skew class, .....
%
Assume to the contrary that $Z-T$ is not tight. Hence there is a subtransversal $S$ of $Z$ with $|S| = |\Omega|-1$ and $S \cap T = \emptyset$ and there is a skew class $\omega \in \Omega$ of $Z$ with $S \cap \omega = \emptyset$ such that $\mathcal{C}(Z[S]) = \mathcal{C}(Z[S \cup \{y\}])$ for all $y \in \omega \setminus T$. Since $Z$ is tight, $\mathcal{C}(Z[S]) \subsetneq \mathcal{C}(Z[S \cup \{x\}])$ for the unique $x \in \omega \cap T$. Let $C \in \mathcal{C}(Z[S \cup \{x\}]) \setminus \mathcal{C}(Z[S])$. Then $|C \cap T| = 1$ --- a contradiction.
\end{Proof}

For a nondegenerate multimatroid $Z = (U,\Omega,\mathcal{C})$ and $T \in \mathcal{T}(\Omega)$, define $\ortE_{T,Z} = \ort(Z) \cap 2^{U\setminus T}$. In other words, $\ortE_{T,Z} = \{ Y \in \ort(Z) \mid Y \cap T = \emptyset \}$. For notational convenience, we drop the subscript $Z$ when the multimatroid $Z$ under consideration is clear, writing $\ortE_T$ instead of $\ortE_{T,Z}$.

We denote symmetric difference by $\sdif$.
\begin{lemma} \label{lem:eul_null}
Let $Z$ be a $3$-matroid. Let $T \in \mathcal{T}(\Omega)$, let $x \in T$, and let $p_1$ and $p_2$ be the skew pairs containing $x$. If $n_Z(T \sdif p_1) = n_Z(T \sdif p_2) = n_Z(T)-1$, then $\ortE_{T \sdif p_1} \cap \ortE_{T \sdif p_2} = \emptyset$ and $\ortE_T = \ortE_{T \sdif p_1} \cup \ortE_{T \sdif p_2}$.
\end{lemma}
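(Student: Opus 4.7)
The plan is to reduce both assertions to a single non-existence claim: no $Y \in \ort(Z)$ satisfies $Y \cap T = \{x\}$. Write $\omega$ for the skew class containing $x$; since $Z$ is a $3$-matroid, $\omega = \{x,y,z\}$ for some $y,z$, and we may take $p_1 = \{x,y\}$ and $p_2 = \{x,z\}$, so that $T\sdif p_1 = (T\setminus\{x\})\cup\{y\}$ and $T\sdif p_2 = (T\setminus\{x\})\cup\{z\}$. The three transversals $T$, $T\sdif p_1$, $T\sdif p_2$ all agree outside $\omega$, so the disjointness conditions defining $\ortE_T$, $\ortE_{T\sdif p_1}$, $\ortE_{T\sdif p_2}$ share the common clause $Y \cap (T\setminus\{x\}) = \emptyset$ and differ only in the constraint on $Y\cap\omega$, namely $\{y,z\}$, $\{x,z\}$, $\{x,y\}$, respectively.

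From these descriptions $\ortE_{T\sdif p_1}\cap\ortE_{T\sdif p_2}$ consists exactly of those $Y$ with $Y\cap\omega = \{x\}$ (equivalently $Y\cap T=\{x\}$), while $\ortE_{T\sdif p_1}\cup\ortE_{T\sdif p_2}$ differs from $\ortE_T$ by precisely the same set. Thus both conclusions follow once it is shown that no $Y \in \ort(Z)$ has $Y\cap T=\{x\}$.

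To prove this, I would set $S = T\setminus\{x\}$; then $|S|=|\Omega|-1$ and $\omega$ is the unique skew class disjoint from $S$. The hypothesis reads as $n_Z(S\cup\{y\}) = n_Z(S\cup\{z\}) = n_Z(S\cup\{x\})-1$, while Condition~\ref{item:char_mm_nullity_restr} of Theorem~\ref{thm:char_mm} allows at most one element of $\omega$ to strictly raise the nullity of $S$ upon addition. Together these force $x$ to be that unique element, giving $n_Z(S\cup\{x\}) = n_Z(S)+1$ and $n_Z(S\cup\{y\}) = n_Z(S\cup\{z\}) = n_Z(S)$. Translating via $n_{Z|S}(\{e\}) = n_Z(S\cup\{e\})-n_Z(S)$, the minor $Z|S$ (whose ground set is the single skew class $\omega$) has $\{x\}$ as a loop and $\{y\}$, $\{z\}$ independent, so $\{x\}$ is the unique circuit of $Z|S$.

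If some $Y\in\ort(Z)$ satisfied $Y\cap T=\{x\}$, then $Y\cap S = \emptyset$, and Lemma~\ref{lem:eul_char_two} would demand a circuit of $Z|S$ disjoint from $Y$; but the only circuit $\{x\}$ meets $Y$, contradiction. The main obstacle is exactly this last step: recognizing that the lemma reduces to a statement about the single-skew-class minor $Z|S$, and then parlaying the nullity hypothesis together with the multimatroid axiom into the conclusion that the only circuit of $Z|S$ is $\{x\}$.
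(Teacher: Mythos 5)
Your proof is correct and rests on the same core mechanism as the paper's: use the nullity hypothesis to pin down the behavior of the order-one minor $Z|(T\setminus\{x\})$ (so that $\{x\}$ is its only circuit), and then invoke tightness of $Z-Y$ — here via Lemma~\ref{lem:eul_char_two}, in the paper via the nullity form of tightness — to exclude $x\in Y$. Your preliminary reduction of both set-theoretic conclusions to the single non-existence claim ``no $Y\in\ort(Z)$ has $Y\cap T=\{x\}$'' is a cleaner reorganization than the paper's element-chasing, but the mathematical substance is the same.
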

\begin{Proof}
Let $\omega$ be the skew class containing $x$. Let $p_1 = \{x,x_1\}$ and $p_2 = \{x,x_2\}$.

Let $Y \in \ortE_{T\sdif p_1}$. Then $Y \cap (T\sdif p_1) = \emptyset$ and $Z-Y$ is tight. We have either $x \in Y$ or $x_2 \in Y$, but not both. Consider the subtransversal $S = T \setminus \omega$. Since $Z-Y$ is tight, $n_Z(S) = n_Z(S \cup \{z\})-1$ for some $z \in \omega \setminus Y$. Since $n_Z(S) = n_Z(T)-1$, we have $z = x$ and so $x \notin Y$ and $x_2 \in Y$. Thus, $Y \cap T = \emptyset$ and $Y \cap (T\sdif p_2) \neq \emptyset$. Therefore, $Y \in \ortE_{T}$ and $Y \notin \ortE_{T \sdif p_2}$.

The case where $Y \in \ortE_{T\sdif p_2}$ is analogous. Hence $\ortE_{T \sdif p_1} \cap \ortE_{T \sdif p_2} = \emptyset$ and $\ortE_T \supseteq \ortE_{T \sdif p_1} \cup \ortE_{T \sdif p_2}$. Let now $Y \in \ortE_T$. Then $Y \cap T = \emptyset$. We have either $Y \cap (T \sdif p_1) = \emptyset$ or $Y \cap (T \sdif p_2) = \emptyset$. Thus, $Y \in \ortE_{T \sdif p_1} \cup \ortE_{T \sdif p_2}$.
\end{Proof}

%\begin{lemma} \label{lem:mm_circ_triangle}
%Let $Z$ be a multimatroid, let $T \in \mathcal{T}(\Omega)$, and let $x \in T$. There is a circuit $C$ of $Z[T]$ with $x \in C$ if and only if $n_Z(T \sdif p) = n_Z(T)-1$ for all skew pairs $p$ containing $x$.
%\end{lemma}
%\begin{Proof}
%By the reformulated definition of multimatroid in terms of circuits, there is at most one $y \in \omega$ such that $\mathcal{C}(Q[(T\setminus\{x\})\cup\{y\}]) \neq \mathcal{C}(Q[T\setminus\{x\}])$. Hence $y = x$.
%\end{Proof}

\begin{lemma} \label{lem:max_one_ort}
Let $Z$ be a 3-matroid and let $B \in \mathcal{T}(\Omega)$ be a basis of $Z$. Then $|\ortE_B| \leq 1$.
\end{lemma}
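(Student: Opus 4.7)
The plan is to apply Lemma~\ref{lem:eul_char_two} with the natural family of subtransversals $S_b := B \setminus \{b\}$, one for each $b \in B$. Fixing $Y \in \ortE_B$, each $S_b$ has size $|\Omega|-1$ and is disjoint from $Y$ (since $Y \cap B = \emptyset$), so the lemma forces $Z|S_b$ to contain a circuit disjoint from $Y$.

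Next I would analyse the structure of these circuits. The minor $Z|S_b$ has order one with single skew class $\omega_b$ (the skew class of $Z$ containing $b$), so by Condition~\ref{item:char_mm_minor} of Theorem~\ref{thm:char_mm} it has at most one circuit. Since the subtransversals of a single skew class are either empty or singletons, this circuit, if it exists, must be $\{a_b\}$ for some $a_b \in \omega_b$. A short check---which I expect to be the only slightly delicate step---shows $a_b \neq b$: by the definition of multimatroid minor, $\{b\}$ could only be a circuit of $Z|S_b$ via the matroid $Z[B]\slash S_b$, but $B$ is independent (as a basis of the nondegenerate multimatroid $Z$), so $b$ does not lie in the closure of $S_b$ in $Z[B]$ and is not a loop of $Z[B]\slash S_b$.

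The crucial point is then that $a_b$ depends only on $Z$ and $B$ (not on $Y$) and lies in the two-element set $\omega_b \setminus \{b\}$. The requirement $a_b \notin Y$, combined with $|Y \cap \omega_b| = 1$, pins down $Y \cap \omega_b$ as the unique third element of $\omega_b \setminus \{b, a_b\}$. Letting $b$ range over $B$ recovers all of $Y$ uniquely, so $|\ortE_B| \leq 1$. I do not foresee any genuine obstacle: once Lemma~\ref{lem:eul_char_two} and the order-one case of Theorem~\ref{thm:char_mm} are in hand, the argument essentially reads off $Y \cap \omega_b$ from the (necessarily unique) singular element of the order-one minor $Z|S_b$.
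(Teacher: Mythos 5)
Your proof is correct. You and the paper both localize to the order-one minors $Z|(B\setminus\{b\})$ and both use the fact that a basis has nullity zero, but the routes differ: the paper argues by contradiction, taking two hypothetical $T_1,T_2\in\ortE_B$, fixing a skew class $\omega$ where they disagree, and applying the nullity form of tightness (Proposition~\ref{prop:char_tight}, Condition~\ref{item:char_tight_nullity}) to $Z-T_1$ and $Z-T_2$ together with the multimatroid nullity condition to force $n_Z(B)>0$, a contradiction. You argue constructively via the circuit form of Lemma~\ref{lem:eul_char_two} and Condition~\ref{item:char_mm_minor} of Theorem~\ref{thm:char_mm}: the unique circuit $\{a_b\}$ of $Z|(B\setminus\{b\})$, which exists whenever $\ortE_B\neq\emptyset$ and satisfies $a_b\neq b$ because $B$ is independent, pins down $Y\cap\omega_b$ as the third element of the skew class, so $Y$ is recovered entirely from $Z$ and $B$. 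Both arguments are essentially the same length; yours has the small advantage of exhibiting the candidate orienting transversal explicitly (it is $\{\,\omega_b\setminus\{b,a_b\}: b\in B\,\}$ whenever it exists), which makes the "at most one" conclusion transparent, while the paper's nullity computation is closer in style to the surrounding lemmas that also manipulate $n_Z$ directly.
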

\begin{Proof}
Assume to the contrary that $|\ortE_B| > 1$. Let $T_1,T_2 \in \ortE_B$ with $T_1 \neq T_2$. Hence $T_1 \cap B = \emptyset$, $T_2 \cap B = \emptyset$, $Z-T_1$ is tight, and $Z-T_2$ is tight.

Let $\omega \in \Omega$ with $\omega \cap T_1 \neq \omega \cap T_2$. Let $b,t_1,t_2 \in \omega$ where $b \in B$, $t_1 \in T_1$, and $t_2 \in T_2$. Note that $S = B \setminus \omega$ is a subtransversal of both $Z-T_1$ and $Z-T_2$. Since $Z-T_1$ is a tight 2-matroid, we have, by Condition~\ref{item:char_tight_nullity} of Proposition~\ref{prop:char_tight}, that $n_Z(S \cup \{b\}) \neq n_Z(S \cup \{t_2\})$. Because $Z-T_2$ is a tight 2-matroid, we have $n_Z(S \cup \{b\}) \neq n_Z(S \cup \{t_1\})$. Again by Condition~\ref{item:char_tight_nullity} of Proposition~\ref{prop:char_tight}, $n_Z(S \cup \{t_1\}) = n_Z(S \cup \{t_2\}) = n_Z(S \cup \{b\}) - 1$. Therefore, $n_Z(B) = n_Z(S \cup \{b\}) > 0$ --- a contradiction because $B$ is a basis. Thus $|\ortE_B| \leq 1$.
\end{Proof}

The following lemma concerning orienting transversals will also be useful.
\begin{lemma} \label{lem:singular_ort}
Let $u \in U$ be a singular element of a nondegenerate multimatroid $Z$. Then $\ort(Z) = \{ T \cup \{v\} \mid T \in \ort(Z-\omega), v \in \omega \setminus \{u\} \}$, where $\omega$ is the skew class of $Z$ that contains $u$. In particular, $|\ort(Z)| = (|\omega|-1) \cdot |\ort(Z-\omega)|$.
\end{lemma}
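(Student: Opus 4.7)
The plan is to partition any candidate $T \in \mathcal{T}(\Omega)$ according to its intersection with $\omega$, writing $T = T' \cup \{w\}$ where $w$ is the unique element of $T \cap \omega$ and $T' = T \setminus \omega$ is a transversal of the carrier of $Z - \omega$. Applying Lemma~\ref{lem:eul_char_two}, $T \in \ort(Z)$ is equivalent to the requirement that $Z|S$ have a circuit disjoint from $T$ for every $S \in \mathcal{S}(\Omega)$ with $|S| = |\Omega|-1$ and $S \cap T = \emptyset$. I would split such $S$ into two types: those with $S \cap \omega = \emptyset$ (type I) and those with $S \cap \omega = \{v\}$ for some $v \in \omega \setminus \{w\}$ (type II), and analyze each type.

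Type I is handled by the singularity hypothesis. In this case $Z|S$ is a multimatroid of order one over $\omega$, and since $\{u\}$ is a circuit of $Z$ and $u \notin S$, $\{u\}$ remains a circuit of $Z|S$; by Condition~\ref{item:char_mm_minor} of Theorem~\ref{thm:char_mm} it is the unique circuit. Hence the type~I requirement holds for every admissible $S$ precisely when $u \notin T$, which is the same as $w \in \omega \setminus \{u\}$.

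Type II reduces to the same condition for $T'$ in $Z - \omega$, via the identity $Z|S = (Z-\omega)|S'$ with $S' = S \setminus \{v\}$. The circuits of $Z|S$ are computed from $Z[\{x\} \cup S]/S$ as $x$ runs through the unique skew class $\omega_1 \neq \omega$ disjoint from $S$. In the matroid $Z[\{x\} \cup S]$ the element $v$ is either a loop (when $v = u$, because $\{u\}$ is a circuit) or a coloop (when $v \neq u$, because the remark after Theorem~\ref{thm:char_mm} together with Condition~\ref{item:char_mm_coloop} identifies $u$ as the unique element of $\omega$ introducing new circuits when adjoined to $\{x\} \cup S'$). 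In either case, contracting $v$ coincides with deleting $v$, so $Z[\{x\} \cup S]/S = (Z-\omega)[\{x\} \cup S']/S'$, yielding $Z|S = (Z-\omega)|S'$. Since the circuits on the right lie in $U \setminus \omega$, disjointness from $T$ reduces to disjointness from $T'$, and applying Lemma~\ref{lem:eul_char_two} once more (now to $Z - \omega$ and $T'$) shows that the type~II condition on $T$ is exactly $T' \in \ort(Z - \omega)$.

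Combining the two types, $T \in \ort(Z)$ iff $w \in \omega \setminus \{u\}$ and $T' \in \ort(Z-\omega)$, which gives the displayed decomposition of $\ort(Z)$; the cardinality statement follows because $(T',w) \mapsto T' \cup \{w\}$ is clearly a bijection from $\ort(Z-\omega) \times (\omega \setminus \{u\})$ to $\ort(Z)$. The main technical obstacle is the coloop identification of $v \neq u$ in type~II: this is the point at which the multimatroid axiom (propagating singularity of $u$ to coloop behaviour of the other elements of $\omega$) is invoked essentially, and without it the key reduction $Z|S = (Z-\omega)|S'$ would fail.
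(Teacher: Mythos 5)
Your proof is correct and rests on the same core fact the paper uses: the singularity of $u$ forces every other element of $\omega$ to be a coloop in the relevant restrictions (via Condition~\ref{item:char_mm_coloop} of Theorem~\ref{thm:char_mm}), so that contracting any element of $\omega$ agrees with deleting $\omega$. The paper packages this as the identity $Z|v = Z-\omega$ and then treats the two directions of the biconditional separately (forward via minor-closure of tightness, backward by checking the tightness of $Z-T$ directly), whereas you funnel both directions through Lemma~\ref{lem:eul_char_two} with a Type~I/Type~II split on $S$; the substance of the argument is the same.
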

\begin{Proof}
It is easy to observe that for all elements $v$ in a singular skew class $\omega$ of a multimatroid $Z$, we have $Z|v = Z - \omega$ (see also \cite[Proposition 5.5]{DBLP:journals/ejc/Bouchet01}).

Let $T$ be a transversal of $Z$. We have $(Z-T)|u = Z-T-\omega = (Z-\omega)-T = (Z-\omega)-(T \setminus \omega)$.

First assume that $T \in \ort(Z)$. Then $Z-T$ is tight. Since tightness is closed under minors, $(Z-T)|u=(Z-\omega)-(T \setminus \omega)$ is tight. Hence $T \setminus \omega \in \ort(Z-\omega)$.

Assume now that $T \setminus \omega \in \ort(Z-\omega)$. In other words, $(Z-\omega)-(T \setminus \omega)$ is tight. It suffices to show now that $T \in \ort(Z)$ if and only if $u \notin T$. Let $S$ be a subtransversal of $Z-T$ with $|S| = |\Omega|-1$. Let $\omega'$ be the unique skew class of $\Omega$ with $S \cap \omega' = \emptyset$.

If $\omega \neq \omega'$, then since $(Z-\omega)-(T \setminus \omega)$ is tight, there is a $x \in \omega'$ such that $(S \setminus \omega) \cup \{x\}$ contains a circuit $C$ of $(Z-\omega)-(T \setminus \omega)$ with $x \in C$. Therefore, $C \subseteq (S \setminus \omega) \cup \{x\} \subseteq S \cup \{x\}$ is a circuit $C$ of $Z-T$ with $x \in C$.

If $\omega = \omega'$, then $S \cup \{u\}$ contains a circuit $C$ of $Z$ with $u \in C$ (namely $C = \{u\}$). Now, $C$ is a circuit of $Z-T$ if and only if $u \notin T$. Consequently, $Z-T$ is tight if and only if $u \notin T$.
\end{Proof}

We end this section with two lemmas that hold for tight nondegenerate multimatroids.

\begin{lemma} \label{lem:diff_not_1_dep}
Let $Z$ be a tight nondegenerate multimatroid and let $S \in \mathcal{S}(\Omega)$ be nonempty. If for every $C \in \mathcal{C}(Z)$, $S \cup C$ does not contain exactly one skew pair, then $S$ is a dependent set of $Z$ (i.e., $C' \subseteq S$ for some $C' \in \mathcal{C}(Z)$).
\end{lemma}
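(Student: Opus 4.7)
The plan is to prove the contrapositive: assuming $S$ is independent, I produce a circuit $C$ of $Z$ such that $S\cup C$ contains exactly one skew pair. Since $S$ and every circuit of $Z$ are subtransversals, the skew pairs of $S\cup C$ are in bijection with the elements $y\in C$ lying in a skew class that meets $S$ at some $x\neq y$. I argue by induction on $|\Omega|$, splitting the inductive step into the cases $|S|=|\Omega|$ (Case~1) and $|S|<|\Omega|$ (Case~2).

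For Case~1, fix $x\in S$ and set $S'=S\setminus\{x\}$; since $S$ is independent, $n_Z(S')=0$. Tightness (Condition~\ref{item:char_tight_nullity} of Proposition~\ref{prop:char_tight}) gives a unique element $z$ in the skew class $\omega_x$ of $x$ with $n_Z(S'\cup\{z\})=1$. The possibility $z=x$ would force $n_Z(S)=1$, contradicting the independence of $S$, so $z\in\omega_x\setminus\{x\}$. Any circuit $C\subseteq S'\cup\{z\}$ must contain $z$ (since $S'$ is independent), and then $S\cup C\subseteq S\cup\{z\}$ contains precisely the skew pair $\{x,z\}$, contradicting the hypothesis.

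For Case~2, pick a skew class $\omega$ disjoint from $S$. For each $y\in\omega$ the minor $Z|y$ is a tight nondegenerate multimatroid of order $|\Omega|-1$ in which $S$ remains a subtransversal. Every circuit $C'$ of $Z|y$ comes from a circuit $D$ of $Z$ with $D\in\{C',\,C'\cup\{y\}\}$, and since $y\in\omega$ and $\omega\cap S=\emptyset$, the sets $S\cup C'$ and $S\cup D$ have the same skew pairs; hence the hypothesis transfers to $(Z|y,S)$. By induction there is a circuit $C'_y\subseteq S$ of $Z|y$, with a lift $D_y\in\{C'_y,\,C'_y\cup\{y\}\}$ in $Z$. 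If $D_y=C'_y$ for some $y$, then $S$ contains $D_y\in\mathcal{C}(Z)$ and we are done. Otherwise $D_y=C'_y\cup\{y\}$ for every $y\in\omega$; choosing distinct $y_1,y_2\in\omega$, the union $D_{y_1}\cup D_{y_2}=C'_{y_1}\cup C'_{y_2}\cup\{y_1,y_2\}$ has $\{y_1,y_2\}$ as its only skew pair (the $C'_{y_i}$ lie in the subtransversal $S$, and $\omega\cap S=\emptyset$), contradicting the multimatroid axiom.

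The main delicate point I foresee is the descent/lift step in Case~2: verifying that every circuit of $Z|y$ corresponds to a circuit of $Z$ differing by at most $\{y\}$, and that the hypothesis transfers cleanly from $(Z,S)$ to $(Z|y,S)$. The key structural reason the argument closes is that the chosen skew class $\omega$ is disjoint from $S$, so two circuits lifted through different elements of $\omega$ contribute exactly one skew pair between them that cannot be absorbed by $S$, which is precisely what the multimatroid axiom forbids.
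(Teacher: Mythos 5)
Your proof is correct, but it takes a genuinely different route from the paper's. The paper handles all cases uniformly in one paragraph: it extends the independent set $S$ to a basis $B$ of $Z$, observes that $B$ is a transversal because $Z$ is nondegenerate, picks $u \in S$, and applies tightness to the subtransversal $B \setminus \{u\}$; the resulting circuit $C$ either makes $S \cup C$ contain exactly one skew pair (contradicting the hypothesis) or lies inside the basis $B$ (contradicting independence of $B$). Your Case~1 (where $|S| = |\Omega|$) is precisely this argument specialized to $B = S$, but rather than invoking the basis-extension observation to cover $|S| < |\Omega|$, you run an induction on $|\Omega|$: pass to the minors $Z|y$ for $y$ in a skew class disjoint from $S$, check the hypothesis transfers (which you verify carefully via the lift $D \in \{C', C' \cup \{y\}\}$ and the fact that $y$'s skew class avoids both $S$ and the subtransversal $T \supseteq C'$), obtain a circuit $C'_y \subseteq S$ of $Z|y$ by induction, lift it, and finally apply the multimatroid axiom to two lifts $D_{y_1}, D_{y_2}$ whose union has $\{y_1,y_2\}$ as its only skew pair. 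Both arguments are sound. The paper's single insight that a basis containing $S$ is automatically a transversal collapses everything into one step, whereas your inductive descent is longer but makes explicit how the ``no circuit with exactly one skew pair against $S$'' hypothesis is inherited by minors $Z|y$, structural information the paper never needs to surface. One small inconsistency in your write-up: you announce a contrapositive argument (assume $S$ independent), but your Case~2 never uses that assumption --- it directly derives that $S$ is dependent, which is the lemma's conclusion rather than its contrapositive. This does not affect correctness, only the framing.
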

\begin{Proof}
Assume to the contrary that $S$ is independent. Let $B$ be a basis of $Z$ with $S \subseteq B$. Let $u \in S$ and consider $S' = B \setminus \{u\}$. Let $\omega \in \Omega$ with $u \in \omega$. Since $Z$ is tight and $B$ a transversal (because $Z$ is nondegenerate), $S' \cup \{x\}$ contains a circuit $C$ for some $x \in \omega$. Since $S \cup C$ does not contain exactly one skew pair, we have $x = u$. However, $S' \cup \{u\} = B$ is a basis, so it cannot contain $C$ --- a contradiction.
\end{Proof}

For $W \subseteq U$, let $\mathrm{Sc}(W) = \{ \omega \in \Omega \mid |W \cap \omega| \geq 2 \}$ be the set of skew classes having a skew pair of $W$ as a subset.
\begin{lemma} \label{lem:two_skew_pairs}
Let $Z$ be a tight nondegenerate multimatroid and let $\omega$ and $\omega'$ be distinct skew classes of $Z$ intersecting some circuit $C$ of $Z$. Then there is a circuit $C'$ of $Z$ such that
$\mathrm{Sc}(C \cup C') = \{\omega,\omega'\}$.
\end{lemma}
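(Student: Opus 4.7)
The plan is to realize $C'$ as the unique circuit inside an appropriate transversal, chosen so that the multimatroid axiom forces $C'$ to produce the desired skew pair in $\omega$.

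Let $\{u\}=C\cap\omega$ and $\{u'\}=C\cap\omega'$ (singletons, since $C$ is a subtransversal), and pick any $v\in\omega\setminus\{u\}$, which exists by nondegeneracy. I first claim that the subtransversal $S:=(C\setminus\{u\})\cup\{v\}$ is independent. Otherwise $S$ contains a circuit $C''$, and since $C\setminus\{u\}$ is independent we must have $v\in C''$. Then $C\cup C''\subseteq C\cup\{v\}$, whose only skew pair is $\{u,v\}\subseteq\omega$ (the other skew classes meet $C\cup\{v\}$ in at most one element, because $C$ is a subtransversal), so $C\cup C''$ includes exactly one skew pair, contradicting the multimatroid axiom.

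Extend $S$ to a basis $B$ of $Z$; nondegeneracy forces $B$ to be a transversal, so $B\cap\omega=\{v\}$ and $B\cap\omega'=\{u'\}$. Now apply Condition~\ref{item:char_tight_nullity} of Proposition~\ref{prop:char_tight} to the subtransversal $B\setminus\{u'\}$, whose uncovered skew class is $\omega'$: this yields a unique $\tilde v'\in\omega'$ with $n_Z((B\setminus\{u'\})\cup\{\tilde v'\})=1$, and $\tilde v'\neq u'$ because $n_Z(B)=0$. Set $B^*:=(B\setminus\{u'\})\cup\{\tilde v'\}$. The matroid $Z[B^*]$ has nullity $1$, hence a unique circuit $C'$, and $\tilde v'\in C'$ (otherwise $C'\subseteq B\setminus\{u'\}$ would be independent). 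I claim this $C'$ is the desired circuit.

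To verify $\mathrm{Sc}(C\cup C')=\{\omega,\omega'\}$: the distinct elements $u'\in C$ and $\tilde v'\in C'$ of $\omega'$ give $\omega'\in\mathrm{Sc}(C\cup C')$. For any skew class $\omega''\notin\{\omega,\omega'\}$, both $C\cap\omega''$ (using $C\setminus\{u,u'\}\subseteq S\subseteq B$) and $C'\cap\omega''\subseteq B^*\cap\omega''=B\cap\omega''$ lie inside the singleton $B\cap\omega''$, so $|(C\cup C')\cap\omega''|\le 1$. In $\omega$ we have $C'\cap\omega\subseteq B\cap\omega=\{v\}$, so a skew pair in $\omega$ occurs precisely when $v\in C'$. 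The multimatroid axiom forbids $C\cup C'$ from containing exactly one skew pair, and $\omega'$ already contributes one; this forces $v\in C'$, giving the required equality. The main obstacle, and the reason for all the setup above, is ensuring that the circuit extracted from tightness passes through \emph{both} $\omega$ and $\omega'$: once $B$ is arranged in advance to contain $v\in\omega\setminus\{u\}$, the multimatroid axiom partners the $\omega'$-pair produced by tightness with the $\omega$-pair and with nowhere else, closing the argument.
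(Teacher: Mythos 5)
Your proof is correct and follows essentially the same strategy as the paper's: construct a basis $B$ containing $C\setminus\omega$ but differing from $C$ in $\omega$, apply tightness to the subtransversal $B$ minus its $\omega'$-element to produce a circuit $C'$ forming a skew pair in $\omega'$, and then let the multimatroid axiom force a second skew pair, which can only be in $\omega$. The only divergence is a small detour at the start: you fix $v\in\omega\setminus\{u\}$ in advance and prove the independence of $(C\setminus\{u\})\cup\{v\}$ via the multimatroid axiom, whereas the paper simply extends the independent set $C\setminus\omega$ to a basis $B$ and notes $B\cap\omega\neq C\cap\omega$ because a basis cannot contain the circuit $C$; this saves an application of the axiom but both variants are sound.
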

\begin{Proof}
Since $C$ is a circuit, $C \setminus \omega$ is independent and so there is a basis $B$ with $C \setminus \omega \subseteq B$. Because $B$ is a transversal (as $Z$ is nondegenerate), $C \cup B$ contains exactly one skew pair $p$ (and we have $p \subseteq \omega$). Since $Z$ is tight, $B \sdif p'$ is not a basis for some skew pair $p' \subseteq \omega'$. Hence $B \sdif p'$ contains a circuit $C'$. We have that $C \cup C'$ contains the skew pair $p' \subseteq \omega'$. Since $C \cup C'$ does not contain exactly one skew pair, $C \cup C'$ must contain another skew pair. The only possible skew pair is $p$.
\end{Proof}

Lemma~\ref{lem:two_skew_pairs} is a generalization of the following well-known result (see, e.g., \cite[Proposition~4.2.6]{Oxley/MatroidBook-2nd}) from matroids (i.e., $2$-matroids of the form $Z = \mathcal{Z}_M$) to arbitrary tight multimatroids. In fact, the proof of Lemma~\ref{lem:two_skew_pairs} is similar as the proof of \cite[Proposition~4.2.6]{Oxley/MatroidBook-2nd}.

\begin{corollary}
Let $M$ be a matroid, let $C \in \mathcal{C}(M)$, and let $x,y \in C$ be distinct. Then there is a $C' \in \mathcal{C}(M^*)$ such that $C \cap C' = \{x,y\}$.
\end{corollary}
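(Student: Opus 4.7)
The plan is to derive this matroid statement as a direct application of Lemma~\ref{lem:two_skew_pairs} to the tight nondegenerate $2$-matroid $\mathcal{Z}_M$. Recall from Section~\ref{sec:sh_mm_ortho} that $\mathcal{Z}_M$ is tight for every matroid $M$, that its skew classes $\omega_e = \{(e,1),(e,2)\}$ all have size two (so $\mathcal{Z}_M$ is nondegenerate), and that $\mathcal{Z}_M$ is sheltered by the direct sum $\varphi_1(M^*) \oplus \varphi_2(M)$. From the direct-sum structure it follows that every circuit of $\mathcal{Z}_M$ is either $\varphi_1(D)$ for some $D \in \mathcal{C}(M^*)$ (and lies inside $T_1$) or $\varphi_2(D)$ for some $D \in \mathcal{C}(M)$ (and lies inside $T_2$). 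This dichotomy is the translation tool between matroid circuits of $M$ and $M^*$ and multimatroid circuits of $\mathcal{Z}_M$.

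The key step is then to take $\varphi_2(C) \in \mathcal{C}(\mathcal{Z}_M)$, which meets the two distinct skew classes $\omega_x$ and $\omega_y$, and apply Lemma~\ref{lem:two_skew_pairs} with these two skew classes to produce a circuit $C''$ of $\mathcal{Z}_M$ satisfying $\mathrm{Sc}(\varphi_2(C) \cup C'') = \{\omega_x,\omega_y\}$.

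To finish, I would read off the desired $D \in \mathcal{C}(M^*)$ from $C''$. For every $e \in C \setminus \{x,y\}$, the element $(e,2)$ already lies in $\varphi_2(C) \cap \omega_e$; since $\omega_e \notin \mathrm{Sc}(\varphi_2(C) \cup C'')$, this forces $C'' \cap \omega_e = \emptyset$. On the other hand, $\omega_x, \omega_y \in \mathrm{Sc}(\varphi_2(C) \cup C'')$ together with $(x,2),(y,2) \in \varphi_2(C)$ forces $(x,1),(y,1) \in C''$. Invoking the dichotomy from the first paragraph, the presence of the $T_1$-element $(x,1)$ in $C''$ pins $C''$ entirely inside $T_1$, so $C'' = \varphi_1(D)$ for some $D \in \mathcal{C}(M^*)$; we then read off $\{x,y\} \subseteq D$ and $D \cap (C \setminus \{x,y\}) = \emptyset$, that is, $C \cap D = \{x,y\}$.

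The only real subtlety, and thus the ``main obstacle'', is the implication ``$(x,1) \in C'' \Rightarrow C'' \subseteq T_1$'', which is where the specific free-sum structure of $\mathcal{Z}_M$ (as opposed to an arbitrary tight nondegenerate multimatroid) is essential; without isolating this point in advance, one cannot conclude that $C''$ comes from a circuit of $M^*$ rather than from some circuit of $M$.
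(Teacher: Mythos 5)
Your proposal is correct and takes essentially the same route as the paper: pass to $\mathcal{Z}_M$ (tight, nondegenerate), apply Lemma~\ref{lem:two_skew_pairs} to the circuit $\varphi_2(C)$ and the skew classes of $x$ and $y$, and then use the fact that every circuit of $\mathcal{Z}_M$ lies entirely in $\varphi_1(E(M))$ or entirely in $\varphi_2(E(M))$ to pin the produced circuit $C''$ inside $T_1$ and read off $C' \in \mathcal{C}(M^*)$ with $C\cap C' = \{x,y\}$. You merely spell out the ``read-off'' step more explicitly than the paper does; the underlying argument is the same.
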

\begin{Proof}
Let $Z = \mathcal{Z}_M$. Then $\varphi_2(C) \in \mathcal{C}(Z)$, where $\varphi_2$ is as in Definition~\ref{def:free_sum}. Let $\omega$ and $\omega'$ be the skew classes of $Z$ containing $\varphi_2(x)$ and $\varphi_2(y)$, respectively. By Lemma~\ref{lem:two_skew_pairs}, there is a circuit $C''$ of $Z$ such that $\mathrm{Sc}(\varphi_2(C) \cup C'') = \{\omega,\omega'\}$. Since every circuit of $Z = \mathcal{Z}_M$ is either a circuit contained in $\varphi_1(E(M))$ or a circuit contained in $\varphi_2(E(M))$, we have that $C'' \subseteq \varphi_1(E(M))$. Thus $C'' = \varphi_1(C')$ for some circuit $C'$ of $M^*$. Since $\mathrm{Sc}(\varphi_2(C) \cup C'') = \{\omega,\omega'\}$, we have $C \cap C' = \{x,y\}$.
\end{Proof}

%% ====================================
\section{Polynomials for Multimatroids} \label{sec:mm_pols}
%% ====================================

We recall from \cite{BenardBouchet/TutteMartin1997,BH/InterlacePolyDM/14} the transition polynomial for multimatroids, whose definition is inspired by \cite{Jaeger1990Transition} in the context of 4-regular graphs. We define it here for semi-multimatroids in general.
\begin{definition}
Let $Z$ be a semi-multimatroid over $(U,\Omega)$. We define the \emph{(weighted) transition polynomial} of $Z$ as
$$
Q(Z;\vec{x}, y) = \sum_{T \in \mathcal{T}(\Omega)} \prodv{\vec{x}}{T} y^{n_Z(T)},
$$
where $\vec{x}: U \to R$ is a function from $U$ to some commutative ring $R$, and $\prodv{\vec{x}}{T} = \prod_{u \in T} \vec{x}(u)$.
\end{definition}

For a function $x: U \to R$ and $U' \subseteq U$, we denote the restriction of $x$ to $U'$ by $x[U']$. For notational convenience, for a multimatroid $Z$ with ground set $U$ and a function $x$ with domain $U' \supseteq U$ we write simply $Q(Z;\vec{x}, y)$ to denote $Q(Z;\vec{x}[U], y)$.

\begin{lemma} \label{lem:split_tpol_iter}
Let $Z$ be a semi-multimatroid, let $\vec{x}, \vec{x'}, \vec{x''}: U \to R$ such that $\vec{x} = \vec{x'} + \vec{x''}$. Then $Q(Z;\vec{x}, y) = \sum_{F \subseteq \Omega} Q(Z;x_F, y)$ where, for every $\omega \in \Omega$, $x_F[\omega]$ is equal to $\vec{x'}[\omega]$ if $\omega \in F$ and equal to $\vec{x''}[\omega]$ if $\omega \notin F$.
\end{lemma}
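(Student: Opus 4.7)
The plan is to prove this by pointwise distributivity inside each transversal, since the sum over $F \subseteq \Omega$ on the right is exactly what one gets by expanding a product of binomials, one factor per skew class.

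First I would fix a transversal $T \in \mathcal{T}(\Omega)$ and rewrite $\prodv{\vec{x}}{T} = \prod_{u \in T} \vec{x}(u)$ as a product indexed by $\Omega$: since $T$ meets each $\omega \in \Omega$ in a unique element $t_\omega$, we have $\prodv{\vec{x}}{T} = \prod_{\omega \in \Omega} \vec{x}(t_\omega) = \prod_{\omega \in \Omega} (\vec{x'}(t_\omega) + \vec{x''}(t_\omega))$. Expanding this product by choosing, for every $\omega$, either the $\vec{x'}$-term or the $\vec{x''}$-term gives
\[
\prodv{\vec{x}}{T} = \sum_{F \subseteq \Omega} \Bigl(\prod_{\omega \in F} \vec{x'}(t_\omega)\Bigr)\Bigl(\prod_{\omega \in \Omega \setminus F} \vec{x''}(t_\omega)\Bigr) = \sum_{F \subseteq \Omega} \prodv{x_F}{T},
\]
where the last equality is the definition of $x_F$ applied element-by-element inside $T$.

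Next I would substitute this identity into the definition of $Q(Z;\vec{x},y)$ and swap the order of summation. Since the two sums are both finite and the exponent $y^{n_Z(T)}$ depends only on $T$ (not on $F$), this is purely a rearrangement:
\[
Q(Z;\vec{x},y) = \sum_{T \in \mathcal{T}(\Omega)} \Bigl(\sum_{F \subseteq \Omega} \prodv{x_F}{T}\Bigr) y^{n_Z(T)} = \sum_{F \subseteq \Omega} \sum_{T \in \mathcal{T}(\Omega)} \prodv{x_F}{T} y^{n_Z(T)}.
\]
Recognizing the inner sum as $Q(Z;x_F,y)$ finishes the proof.

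There is really no obstacle here beyond correct bookkeeping: the content is the distributive law applied $|\Omega|$ times, together with the fact that a transversal is defined precisely so that $\prod_{u \in T}$ factors as $\prod_{\omega \in \Omega}$ with one factor per skew class. If a more modular presentation were preferred, one could instead argue by induction on $|\Omega|$, splitting a single fixed $\omega$ at each step (this is presumably what the suffix \emph{iter} in the lemma name alludes to), but the direct expansion above is shorter and does not require introducing any auxiliary notation.
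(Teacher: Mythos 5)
Your proof is correct and is essentially the same as the paper's: both expand $\prod_{u\in T}(\vec{x'}(u)+\vec{x''}(u))$ via the distributive law and then exchange the two finite sums. The only cosmetic difference is that you index the expansion directly over $F\subseteq\Omega$, while the paper first writes it as a sum over $X\subseteq T$ and then identifies that with a sum over $F\subseteq\Omega$ via the natural bijection between subsets of $T$ and subsets of $\Omega$.
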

\begin{Proof}
We have $Q(Z;\vec{x}, y) = \sum_{T \in \mathcal{T}(\Omega)} \prodv{\vec{x}}{T} y^{n_Z(T)} = \sum_{T \in \mathcal{T}(\Omega)} \prod_{u \in T} x(u) y^{n_Z(T)} = \sum_{T \in \mathcal{T}(\Omega)} \allowbreak \prod_{u \in T} (x'(u) + x''(u)) y^{n_Z(T)}$. Expansion of $\prod_{u \in T} (x'(u) + x''(u))$ obtains $\sum_{X \subseteq T} \prodv{\vec{x}'}{X}\prodv{\vec{x}''}{T \setminus X} = \sum_{F \subseteq \Omega} \prodv{\vec{x}_F}{T}$, and so we obtain the desired result.
\end{Proof}

\begin{lemma} \label{lem:split_tpol_transv}
Let $Z$ be a semi-multimatroid and let $T \in \mathcal{T}(\Omega)$. Let $\vec{x}, \vec{x'}: U \to R$ with $x'(u) = 0$ for all $u \in U \setminus T$. Then $Q(Z;\vec{x},y) = \sum_{F \subseteq T} \prodv{\vec{x'}}{F} y^{n_Z(F)} Q(Z|F;\vec{x}-\vec{x'},y)$.
\end{lemma}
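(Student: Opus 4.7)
The plan is to apply Lemma~\ref{lem:split_tpol_iter} to the decomposition $\vec{x} = \vec{x'} + (\vec{x} - \vec{x'})$, then exploit the hypothesis that $\vec{x'}$ vanishes outside $T$ to collapse the resulting sum into one indexed by subsets $F' \subseteq T$, and finally invoke the nullity identity $n_Z(F' \cup T'') = n_Z(F') + n_{Z|F'}(T'')$ recorded just before Definition~\ref{def:tight}.

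In detail, Lemma~\ref{lem:split_tpol_iter} gives
\[ Q(Z;\vec{x},y) = \sum_{F \subseteq \Omega} Q(Z;\vec{x_F},y), \]
where $\vec{x_F}$ agrees with $\vec{x'}$ on each $\omega \in F$ and with $\vec{x}-\vec{x'}$ on each $\omega \notin F$. For $\omega \in F$ the restriction of $\vec{x_F}$ to $\omega$ inherits the vanishing of $\vec{x'}$ outside $T$, so in the expansion $Q(Z;\vec{x_F},y) = \sum_{T' \in \mathcal{T}(\Omega)} \prodv{\vec{x_F}}{T'} y^{n_Z(T')}$ only those $T'$ survive for which $T' \cap \omega = T \cap \omega$ whenever $\omega \in F$. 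Equivalently, $T'$ must contain the subtransversal $F' := T \cap \bigcup F$.

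Because $T$ is a transversal, the assignment $F \mapsto F'$ is a bijection between subsets of $\Omega$ and subsets of $T$, so I would reindex the outer sum by $F' \subseteq T$. Each surviving $T'$ decomposes uniquely as $T' = F' \cup T''$, with $T''$ ranging over the transversals of $\Omega' := \{\omega \in \Omega \mid \omega \cap F' = \emptyset\}$, i.e., the skew classes of $Z|F'$. The nullity identity splits $y^{n_Z(F' \cup T'')}$ as $y^{n_Z(F')} \cdot y^{n_{Z|F'}(T'')}$, while the product $\prodv{\vec{x_F}}{T'}$ splits as $\prodv{\vec{x'}}{F'} \cdot \prodv{\vec{x}-\vec{x'}}{T''}$. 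Pulling the $F'$-dependent factors out of the inner sum leaves precisely $Q(Z|F'; \vec{x} - \vec{x'}, y)$, giving the claimed identity.

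The argument is essentially bookkeeping, so no deep obstacle arises. The single subtle point I would flag is the reindexing: once $\vec{x'}$ is supported on $T$, the sum over $F \subseteq \Omega$ effectively parametrizes which coordinates of $T$ are ``activated,'' so the natural index of summation is $F' \subseteq T$ rather than $F \subseteq \Omega$, after which the nullity identity does the rest of the work.
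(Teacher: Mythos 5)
Your proposal is correct and follows essentially the same route as the paper's own proof: apply Lemma~\ref{lem:split_tpol_iter} to the decomposition $\vec{x}=\vec{x'}+(\vec{x}-\vec{x'})$, use the vanishing of $\vec{x'}$ off $T$ to see that only transversals containing the activated part of $T$ survive, reindex by subsets of $T$ via the natural bijection, and split the nullity via $n_Z(F\cup T'')=n_Z(F)+n_{Z|F}(T'')$. The only presentational difference is that you argue the vanishing before reindexing, whereas the paper reindexes first; the content is identical.
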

\begin{Proof}
Let $\vec{x''} = \vec{x}-\vec{x'}$. By Lemma~\ref{lem:split_tpol_iter} and by the natural one-to-one correspondence between subsets $S$ of $\Omega$ and subsets $\{ u \in T \mid u \in \omega \in S, \mbox{ for some } \omega \in \Omega\}$ of a transversal $T \in \mathcal{T}(\Omega)$, we have $Q(Z;\vec{x}, y) = \sum_{F \subseteq T} Q(Z;x_F, y)$ where $x_F[\omega]$ is equal to $\vec{x'}[\omega]$ if $\omega \cap F \neq \emptyset$ and to $\vec{x''}[\omega]$ otherwise. We have $Q(Z;x_F, y) = \sum_{T \in \mathcal{T}(\Omega)} \prodv{\vec{x}_F}{T} y^{n_Z(T)} = \sum_{T' \in \mathcal{T}(\Omega')} \prodv{\vec{x'}}{F} \prodv{\vec{x''}}{T'} y^{n_Z(T' \cup F)}$ with $\Omega' = \{ \omega \in \Omega \mid \omega \cap F = \emptyset \}$. Now, $n_Z(T' \cup F) = n_{Z|F}(T') + n_Z(F)$ and thus $Q(Z;x_F, y) = \prodv{\vec{x'}}{F} y^{n_Z(F)}Q_1(Z|F;x'',y)$. Hence we obtain the desired result.
\end{Proof}

The polynomial $Q(Z;\vec{x}, y)$ with $x(u) = 1$ for all $u \in U$ is denoted by $Q_1(Z;y) = \sum_{T \in \mathcal{T}(\Omega)} y^{n_Z(T)}$.
\begin{lemma} \label{lem:split_tpol_transv_min_one}
Let $Z$ be a nondegenerate semi-multimatroid and let $T \in \mathcal{T}(\Omega)$. Then $Q_1(Z-T;y) = \sum_{F \subseteq T} (-1)^{|F|} y^{n_Z(F)} Q_1(Z|F;y)$.
\end{lemma}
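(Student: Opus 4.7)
The plan is to deduce this from Lemma~\ref{lem:split_tpol_transv} by a suitable choice of $\vec{x}$ and $\vec{x}'$. Define $\vec{x}\colon U \to R$ by $\vec{x}(u)=1$ if $u \in U \setminus T$ and $\vec{x}(u)=0$ if $u \in T$, and define $\vec{x}'\colon U \to R$ by $\vec{x}'(u)=-1$ if $u \in T$ and $\vec{x}'(u)=0$ if $u \in U\setminus T$. Then $\vec{x}'$ satisfies the vanishing hypothesis of Lemma~\ref{lem:split_tpol_transv}, and $\vec{x}-\vec{x}'$ is the constant function $1$ on $U$.

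First I would identify the left-hand side of Lemma~\ref{lem:split_tpol_transv} under this choice. In $Q(Z;\vec{x},y)=\sum_{T'\in\mathcal{T}(\Omega)}\prodv{\vec{x}}{T'}y^{n_Z(T')}$, the term for $T'$ vanishes unless $T'\cap T = \emptyset$, in which case $\prodv{\vec{x}}{T'}=1$. Because $Z$ is nondegenerate, every $\omega \setminus T$ is nonempty, so the transversals of $\Omega$ disjoint from $T$ are in bijection with the transversals of $Z-T$; since $(Z-T)[T']=Z[T']$ for such $T'$, we also have $n_Z(T') = n_{Z-T}(T')$. Therefore $Q(Z;\vec{x},y)=Q_1(Z-T;y)$.

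Next I would simplify the right-hand side of Lemma~\ref{lem:split_tpol_transv}. For each $F \subseteq T$, every element of $F$ has $\vec{x}'$-value $-1$, so $\prodv{\vec{x}'}{F}=(-1)^{|F|}$. Moreover, since $\vec{x}-\vec{x}'$ is the constant function $1$, we have $Q(Z|F;\vec{x}-\vec{x}',y)=Q_1(Z|F;y)$. Substituting these evaluations into Lemma~\ref{lem:split_tpol_transv} yields
$$
Q_1(Z-T;y) \;=\; \sum_{F \subseteq T}(-1)^{|F|}\,y^{n_Z(F)}\,Q_1(Z|F;y),
$$
as required.

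There is no real obstacle: the identity is essentially a specialization of Lemma~\ref{lem:split_tpol_transv}, written so that inclusion–exclusion over $T$ produces the factor $(-1)^{|F|}$. The one point worth emphasising is that nondegeneracy of $Z$ is precisely what ensures the bijection between transversals of $Z-T$ and transversals of $\Omega$ disjoint from $T$, so that the chosen $\vec{x}$ indeed recovers $Q_1(Z-T;y)$ on the left-hand side.
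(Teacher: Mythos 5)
Your proof is correct and follows exactly the paper's argument: it sets $\vec{x}$ to the indicator of $U\setminus T$ and $\vec{x}'$ to $-1$ on $T$ and $0$ elsewhere (the paper names the difference $\vec{x}''=\vec{x}-\vec{x}'$, the constant $1$), identifies $Q(Z;\vec{x},y)$ with $Q_1(Z-T;y)$ via nondegeneracy, and then applies Lemma~\ref{lem:split_tpol_transv}. Nothing is missing and the reasoning matches the source proof step for step.
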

\begin{Proof}
Let $\vec{x}, \vec{x'}, \vec{x''}: U \to R$ be such that (1) $x(u) = 1$ for all $u \in U\setminus T$ and $x(u) = 0$ otherwise, (2) $x'(u) = 0$ for all $u \in U\setminus T$ and $x'(u) = -1$ otherwise, and (3) $x''(u) = 1$ for all $u \in U$. Note that $\vec{x} = \vec{x'} + \vec{x''}$. By the definition of $x$ and the fact that $Z$ is nondegenerate, we have $Q_1(Z-T;y) = Q(Z;\vec{x}, y)$. By Lemma~\ref{lem:split_tpol_transv}, $Q(Z;\vec{x}, y) = \sum_{F \subseteq T} \prodv{\vec{x'}}{F} y^{n_Z(F)} Q(Z|F;\vec{x''},y) = \sum_{F \subseteq T} (-1)^{|F|}y^{n_Z(F)}Q_1(Z|F;y)$.
\end{Proof}

It is interesting to notice that Lemma~\ref{lem:split_tpol_transv_min_one} holds even though $Z$ is in general not unique given $Z-T$.

\begin{lemma} \label{lem:split_tpol_transv_plus_one}
Let $Z$ be a nondegenerate semi-multimatroid and let $T \in \mathcal{T}(\Omega)$. Then $Q_1(Z;y) = \sum_{F \subseteq T} y^{n_Z(F)} Q_1((Z|F)-(T\setminus F);y)$.
\end{lemma}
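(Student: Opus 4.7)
The plan is to derive the identity as an immediate consequence of Lemma~\ref{lem:split_tpol_transv} via a judicious choice of weight functions, in the same spirit as the proof of Lemma~\ref{lem:split_tpol_transv_min_one}. Concretely, I would take $\vec{x}\colon U \to R$ to be constantly $1$, and $\vec{x'}\colon U \to R$ to be the indicator function of $T$, so that $\vec{x'}(u)=1$ for $u \in T$ and $\vec{x'}(u)=0$ otherwise. Then $Q(Z;\vec{x},y)=Q_1(Z;y)$ on the left, the hypothesis $\vec{x'}(u)=0$ for $u \in U\setminus T$ required by Lemma~\ref{lem:split_tpol_transv} is satisfied, and $\prodv{\vec{x'}}{F}=1$ for every $F\subseteq T$. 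This reduces the claim to identifying each remaining factor $Q(Z|F;\vec{x}-\vec{x'},y)$ with $Q_1((Z|F)-(T\setminus F);y)$.

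For this identification, note that $\vec{x}-\vec{x'}$ is the indicator function of $U\setminus T$, so for any $F\subseteq T$ and any transversal $T'$ of the skew classes of $Z|F$, the coefficient $\prodv{\vec{x}-\vec{x'}}{T'}$ equals $1$ if $T'\cap T=\emptyset$ and $0$ otherwise. Since $F\subseteq T$ and $T$ is a transversal of $\Omega$, the ground set of $Z|F$ intersects $T$ in precisely $T\setminus F$, so the surviving transversals $T'$ are exactly those disjoint from $T\setminus F$. Nondegeneracy of $Z$ guarantees $|\omega\setminus T|\geq 1$ for every $\omega\in\Omega$, which ensures these transversals are in one-to-one correspondence with the transversals of the restriction $(Z|F)-(T\setminus F)$. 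Moreover, since restriction does not alter the circuits contained inside the restricted ground set, $n_{Z|F}(T')=n_{(Z|F)-(T\setminus F)}(T')$ for any such $T'$, whence $Q(Z|F;\vec{x}-\vec{x'},y)=Q_1((Z|F)-(T\setminus F);y)$.

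Combining these observations via Lemma~\ref{lem:split_tpol_transv} yields the desired formula. The main obstacle is purely bookkeeping at the level of ground sets and skew classes: one must check carefully that $T\setminus F$ sits inside the ground set of $Z|F$ and that deleting it does not destroy any skew class, which is exactly where the nondegeneracy assumption enters. No submodularity or multimatroid-specific circuit axiom is needed; the result is a purely combinatorial weight-substitution identity for semi-multimatroids, entirely parallel to Lemma~\ref{lem:split_tpol_transv_min_one} but with the signs $(-1)^{|F|}$ absent because $\vec{x'}$ is chosen with value $+1$ on $T$ rather than $-1$.
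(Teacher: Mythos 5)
Your proposal is correct and follows exactly the paper's own proof: the same weight functions $\vec{x}\equiv 1$, $\vec{x'}=\mathbf{1}_T$, $\vec{x''}=\mathbf{1}_{U\setminus T}$ are plugged into Lemma~\ref{lem:split_tpol_transv}, and the only remaining step is to identify $Q(Z|F;\vec{x''},y)$ with $Q_1((Z|F)-(T\setminus F);y)$. Your explicit remark on where nondegeneracy enters (so that the skew classes of $Z|F$ are not emptied by deleting $T\setminus F$) is a sound elaboration of a point the paper leaves implicit.
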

\begin{Proof}
Let $\vec{x}, \vec{x'}, \vec{x''}: U \to R$ be such that (1) $x(u) = 1$ for all $u \in U$, (2) $x'(u) = 0$ for all $u \in U\setminus T$ and $x'(u) = 1$ otherwise, and (3) $x''(u) = 1$ for all $u \in U \setminus T$ and $x''(u) = 0$ otherwise. Note that $\vec{x} = \vec{x'} + \vec{x''}$. We have $Q_1(Z;y) = Q(Z;\vec{x}, y)$. By Lemma~\ref{lem:split_tpol_transv}, $Q(Z;\vec{x}, y) = \sum_{F \subseteq T} \prodv{\vec{x'}}{F} y^{n_Z(F)} Q(Z|F;\vec{x''},y) = \sum_{F \subseteq T} y^{n_Z(F)} Q_1((Z|F)-(T\setminus F);y)$.
\end{Proof}

Note that Lemma~\ref{lem:split_tpol_transv_min_one} and Lemma~\ref{lem:split_tpol_transv_plus_one} simplify further when $T \in \mathcal{T}(\Omega)$ is a basis. Indeed, in this case we obtain $Q_1(Z-T;y) = \sum_{F \subseteq T} (-1)^{|F|} Q_1(Z|F;y)$ and $Q_1(Z;y) = \sum_{F \subseteq T} Q_1((Z|F)-(T\setminus F);y)$. We remark that a special case of the latter equality in the case where $Z$ is a tight $3$-matroid is given in Lemma~33 of \cite{BH/InterlacePolyDM/14} (formulated there in terms of \dmatroids).
%\corr{Uitleggen ergens?}

We recall from \cite{BH/InterlacePolyDM/14} the following result (which also follows from Lemma~\ref{lem:split_tpol_transv_min_one} and the observation that $Q_1(Z;1-k) = 0$ for every tight $k$-matroid $Z$ with nonempty ground set).
\begin{proposition} \label{prop:mmpol_res_1-k}
Let $Z$ be a tight $k$-matroid for some $k > 1$ and let $T \in \mathcal{T}(\Omega)$. Then $Q_1(Z-T;1-k) = (-1)^{|\Omega|}(1-k)^{n_Z(T)}$.
\end{proposition}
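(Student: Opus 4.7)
The plan is to combine Lemma~\ref{lem:split_tpol_transv_min_one} with the auxiliary observation, pointed out just before the statement, that $Q_1(Z'; 1-k) = 0$ for every tight $k$-matroid $Z'$ with nonempty ground set. Once this auxiliary fact is in hand, the proposition will follow almost immediately because all but one term of the sum produced by Lemma~\ref{lem:split_tpol_transv_min_one} vanish at $y = 1-k$.

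First I would prove the vanishing. Fix any skew class $\omega$ of $Z'$, and partition the sum defining $Q_1(Z';y)$ by first choosing a subtransversal $S$ of $Z'$ with $|S| = |\Omega'|-1$ and $S \cap \omega = \emptyset$, and then choosing the element of $\omega$ that completes $S$ to a transversal. By Condition~\ref{item:char_tight_nullity} of Proposition~\ref{prop:char_tight} there is exactly one $x \in \omega$ with $n_{Z'}(S \cup \{x\}) = n_{Z'}(S) + 1$, while every other $z \in \omega \setminus \{x\}$ satisfies $n_{Z'}(S \cup \{z\}) = n_{Z'}(S)$. Hence the contribution of such an $S$ to $Q_1(Z';y)$ equals
\[
y^{n_{Z'}(S)}\bigl(y + (k-1)\bigr),
\]
which vanishes at $y = 1-k$. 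Summing over $S$ yields $Q_1(Z'; 1-k) = 0$.

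For the main statement, apply Lemma~\ref{lem:split_tpol_transv_min_one} at $y = 1-k$:
\[
Q_1(Z-T; 1-k) = \sum_{F \subseteq T} (-1)^{|F|} (1-k)^{n_Z(F)} Q_1(Z|F; 1-k).
\]
Tightness is preserved under minors (Proposition~\ref{prop:char_tight}), so every $Z|F$ is again a tight $k$-matroid. For any $F \subsetneq T$ the minor $Z|F$ has nonempty ground set, because every skew class of $Z$ meeting $T \setminus F$ is disjoint from $F$ and so survives in $Z|F$; the auxiliary fact then gives $Q_1(Z|F; 1-k) = 0$. Only $F = T$ contributes, and for that term $Z|T$ is the empty multimatroid, whence $Q_1(Z|T; y) = 1$. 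Since $|T| = |\Omega|$, we conclude
\[
Q_1(Z-T; 1-k) = (-1)^{|\Omega|}(1-k)^{n_Z(T)}.
\]
The only substantive step is the vanishing lemma; everything else is bookkeeping with the inversion formula of Lemma~\ref{lem:split_tpol_transv_min_one}, together with the well-behavedness of tightness under minors.
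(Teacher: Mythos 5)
Your proof is correct and follows exactly the route the paper itself sketches in the parenthetical remark preceding the proposition: combine Lemma~\ref{lem:split_tpol_transv_min_one} with the vanishing $Q_1(Z';1-k)=0$ for tight $k$-matroids $Z'$ with nonempty ground set. You have filled in the details, including a clean proof of the vanishing observation via Condition~\ref{item:char_tight_nullity} of Proposition~\ref{prop:char_tight}, and correctly noted that $Z|F$ remains a tight $k$-matroid with nonempty ground set whenever $F\subsetneq T$.
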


An interesting property of the polynomial $Q_1$ is that it is connected to the diagonal evaluation of the Tutte polynomial $T(M;x,y)$ of a matroid $M$.
\begin{proposition} [Corollary 25 of \cite{BH/InterlacePolyDM/14}] \label{prop:tutte_as_Q}
Let $M$ be a matroid. Then $T(M;x,x) = Q_1(\mathcal{Z}_{M};x-1)$.
\end{proposition}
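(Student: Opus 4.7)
The plan is to expand both sides via the standard corank--nullity formula and to identify transversals of $\mathcal{Z}_M$ with subsets of $E(M)$.

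First I would recall that the corank--nullity expansion of the Tutte polynomial gives
$$
T(M;x,x) = \sum_{A \subseteq E(M)} (x-1)^{r(M)-r_M(A)}\,(x-1)^{|A|-r_M(A)} = \sum_{A \subseteq E(M)} (x-1)^{r(M)+|A|-2r_M(A)}.
$$
On the other side, writing $\mathcal{Z}_M$ as the free sum of $(M^*,M)$, every transversal $T \in \mathcal{T}(\Omega)$ is determined by the subset $A = \varphi_2^{-1}(T \cap T_2) \subseteq E(M)$ (the complement $E(M)\setminus A$ corresponds to $T \cap T_1$, via $\varphi_1$). Since $\mathcal{Z}_M[T]$ is the direct sum of $M^*[E(M)\setminus A]$ and $M[A]$, its nullity splits as a sum, so
$$
n_{\mathcal{Z}_M}(T) = n_{M[A]} + n_{M^*[E(M)\setminus A]}.
$$

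Next I would compute each of these nullities explicitly. The first is immediate: $n_{M[A]} = |A| - r_M(A)$. For the second I invoke the standard identity $r_{M^*}(X) = |X| - r(M) + r_M(E(M)\setminus X)$ applied to $X = E(M)\setminus A$, which after cancellation yields $n_{M^*[E(M)\setminus A]} = r(M) - r_M(A)$. Adding the two contributions produces
$$
n_{\mathcal{Z}_M}(T) = r(M) + |A| - 2\,r_M(A),
$$
which is exactly the exponent appearing in the Tutte expansion above.

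Finally, since the map $T \mapsto A$ is a bijection from $\mathcal{T}(\Omega)$ to $2^{E(M)}$, summing over all transversals gives
$$
Q_1(\mathcal{Z}_M; x-1) = \sum_{T \in \mathcal{T}(\Omega)} (x-1)^{n_{\mathcal{Z}_M}(T)} = \sum_{A \subseteq E(M)} (x-1)^{r(M)+|A|-2r_M(A)} = T(M;x,x).
$$
The only nontrivial step is the bookkeeping with the dual rank function; there is no genuine obstacle, and the argument is essentially a direct computation once the direct-sum structure of $\mathcal{Z}_M[T]$ is recognised.
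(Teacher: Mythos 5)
Your proof is correct, and the computation is clean: the corank--nullity expansion of $T(M;x,x)$, the bijection $T\mapsto A=\varphi_2^{-1}(T\cap T_2)$, the direct-sum decomposition $\mathcal{Z}_M[T]\cong M^*[E(M)\setminus A]\oplus M[A]$ coming from the sheltering matroid $\varphi_1(M^*)\oplus\varphi_2(M)$, and the dual-rank identity all check out, giving $n_{\mathcal{Z}_M}(T)=r(M)+|A|-2r_M(A)$ exactly as needed. The paper itself only cites this statement (Corollary~25 of the referenced work) without reproducing a proof, so there is no internal argument to compare against; your direct rank--nullity computation is the natural self-contained derivation and matches the standard way this correspondence is established.
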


With the Propositions~\ref{prop:quat_mat} and \ref{prop:tutte_as_Q} in place, we may specialize Proposition~\ref{prop:mmpol_res_1-k} to the Tutte polynomial for quaternary matroids.
\begin{corollary} [\cite{DBLP:journals/jct/Vertigan98}]
Let $M$ be a quaternary matroid. Then $T(M;-1,-1) = (-1)^{|E(M)|} (-2)^{d}$, where $d$ is the bicycle dimension of $M$.
\end{corollary}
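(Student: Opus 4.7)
The plan is to chain three results already established: first rewrite the diagonal Tutte evaluation as a transition-polynomial value via Proposition~\ref{prop:tutte_as_Q}, then realize $\mathcal{Z}_M$ as a transversal-deletion of a tight $3$-matroid via Proposition~\ref{prop:quat_mat}, and finally invoke Proposition~\ref{prop:mmpol_res_1-k} with $k=3$ to get the closed-form evaluation.

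In more detail: by Proposition~\ref{prop:tutte_as_Q}, $T(M;-1,-1) = Q_1(\mathcal{Z}_M;-2)$. Since $M$ is quaternary, Proposition~\ref{prop:quat_mat} gives a tight $3$-matroid $\mathcal{Z}_{M,3}$ over a carrier $(U,\Omega)$ with $|\Omega| = |E(M)|$, and a transversal $T_3$, such that $\mathcal{Z}_{M,3} - T_3 = \mathcal{Z}_M$. Applying Proposition~\ref{prop:mmpol_res_1-k} with $k=3$, $Z = \mathcal{Z}_{M,3}$, and $T = T_3$ yields
\[
Q_1(\mathcal{Z}_M;-2) \;=\; Q_1(\mathcal{Z}_{M,3} - T_3;\,1-3) \;=\; (-1)^{|\Omega|}\,(-2)^{n_{\mathcal{Z}_{M,3}}(T_3)}.
\]
Since $|\Omega| = |E(M)|$, it remains to identify the exponent $n_{\mathcal{Z}_{M,3}}(T_3)$ with the bicycle dimension $d$.

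This last identification is where one has to unpack definitions rather than prove anything new: by the discussion following Proposition~\ref{prop:quat_mat}, the matroid $\varphi_3^{-1}(\mathcal{Z}_{M,3}[T_3])$ is by definition the bicycle matroid of $M$, and its nullity is by definition the bicycle dimension $d$ of $M$. Because $\mathcal{Z}_{M,3}[T_3]$ is isomorphic (via $\varphi_3^{-1}$) to the bicycle matroid, and the nullity $n_{\mathcal{Z}_{M,3}}(T_3)$ equals $n(\mathcal{Z}_{M,3}[T_3])$ by the definition of multimatroid nullity, we get $n_{\mathcal{Z}_{M,3}}(T_3) = d$. Substituting yields $T(M;-1,-1) = (-1)^{|E(M)|}(-2)^d$, as required.

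I do not expect a genuine obstacle: the whole content of the corollary is the compatibility of the three ingredients. The only place where one might slip is making sure that the bicycle dimension used in the statement matches the invariant $n_{\mathcal{Z}_{M,3}}(T_3)$ that drops out of Proposition~\ref{prop:mmpol_res_1-k}; this is a matter of citing the relevant definitions from the paragraph after Proposition~\ref{prop:quat_mat} rather than a substantive argument.
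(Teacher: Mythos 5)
Your chain of results is exactly the one the paper intends: the paper introduces the corollary with the remark that Propositions~\ref{prop:quat_mat}, \ref{prop:tutte_as_Q}, and \ref{prop:mmpol_res_1-k} combine to give it, and your argument (including the identification of $n_{\mathcal{Z}_{M,3}}(T_3)$ with the bicycle dimension via the paragraph after Proposition~\ref{prop:quat_mat}) spells out that combination correctly. Nothing is missing.
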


\section{Orienting transversals and binary tight 3-matroids} \label{sec:ort_bin_t3m}

\subsection{Binary tight 3-matroids}

%For $W \subseteq U$, denote by $\mathrm{Sp}(W)$ the set of skew pairs of $W$. Also, let $\mathrm{Sc}(W) = \{ \omega \in \Omega \mid p \subseteq \omega, \mbox{ for some } p \in \mathrm{Sp}(W)\}$ be the set of skew classes having a skew pair of $W$ as a subset.

\begin{definition} \label{def:sum_subtransv}
Let $(U,\Omega)$ be an $(\ell,3)$-carrier for some $\ell$. For all $X,Y \in \mathcal{S}(\Omega)$, we define $X+Y := X \sdif Y \sdif (\bigcup \mathrm{Sc}(X \sdif Y))$.
\end{definition}
Note that for a $(\ell,3)$-carrier and $X,Y \in \mathcal{S}(\Omega)$, we have $X + Y \in \mathcal{S}(\Omega)$.

\begin{lemma} \label{lem:sdif_skew_classes}
Let $(U,\Omega)$ be an $(\ell,3)$-carrier and $S,S',S'' \in \mathcal{S}(\Omega)$. Then $\mathrm{Sc}(S \cup (S'+S'')) = \mathrm{Sc}(S \cup S') \sdif \mathrm{Sc}(S \cup S'')$.
\end{lemma}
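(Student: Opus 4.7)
The plan is to check the identity skew class by skew class and then convert the single‐skew‐class version into an $\mathbb{F}_2$-linearity statement by identifying the four possible traces with $\mathbb{F}_2^2$.

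First I would note that for any $W \subseteq U$ and any $\omega \in \Omega$, whether $\omega \in \mathrm{Sc}(W)$ depends only on $W \cap \omega$, so the desired set equality reduces to fixing an arbitrary $\omega \in \Omega$ and proving $\omega \in \mathrm{Sc}(S \cup (S'+S''))$ iff $\omega$ lies in exactly one of $\mathrm{Sc}(S \cup S')$ and $\mathrm{Sc}(S \cup S'')$. Writing $\omega = \{a,b,c\}$ and $s = S \cap \omega$, $s' = S' \cap \omega$, $s'' = S'' \cap \omega$ (each empty or a singleton), I would unfold Definition~\ref{def:sum_subtransv} at $\omega$ to record $(S'+S'') \cap \omega$ in three cases: empty when $s' = s''$; equal to the unique nonempty one of $\{s',s''\}$ when exactly one of $s',s''$ is empty; and equal to the singleton $\omega \setminus (s' \cup s'')$ (the third element of $\omega$) when $s'$ and $s''$ are distinct singletons.

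The key step is then to fix any bijection $\phi$ between the four possible traces $\{\emptyset,\{a\},\{b\},\{c\}\}$ and the group $G = \mathbb{F}_2^2$ with $\phi(\emptyset) = 0$. Since any two distinct nonzero vectors of $G$ sum to the third, the case breakdown above says precisely that $\phi((S'+S'') \cap \omega) = \phi(s') + \phi(s'')$ in $G$. Moreover, $\omega \in \mathrm{Sc}(S \cup X)$ asserts that $s$ and $X \cap \omega$ are distinct nonempty singletons, which never holds when $s = \emptyset$, and otherwise holds exactly when $\phi(X \cap \omega) \in G \setminus \{0,\phi(s)\}$, the nontrivial coset of the line $\{0,\phi(s)\}$. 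In either case there is an $\mathbb{F}_2$-linear functional $\ell_s : G \to \mathbb{F}_2$ such that the indicator of $\omega \in \mathrm{Sc}(S \cup X)$ equals $\ell_s(\phi(X \cap \omega))$: take $\ell_s = 0$ when $s = \emptyset$, and the unique functional with kernel $\{0,\phi(s)\}$ otherwise.

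The lemma then drops out of linearity:
\[
\mathbf{1}[\omega \in \mathrm{Sc}(S \cup (S'+S''))] = \ell_s\!\left(\phi(s') + \phi(s'')\right) = \ell_s(\phi(s')) + \ell_s(\phi(s'')) = \mathbf{1}[\omega \in \mathrm{Sc}(S \cup S')] + \mathbf{1}[\omega \in \mathrm{Sc}(S \cup S'')]
\]
in $\mathbb{F}_2$, which is exactly the symmetric-difference statement. The only computational step is the case verification that $+$ restricted to a skew class agrees with addition in $\mathbb{F}_2^2$, and this follows immediately from Definition~\ref{def:sum_subtransv} together with $|\omega|=3$; everything else is structural, so I do not anticipate a substantial obstacle.
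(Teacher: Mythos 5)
Your argument is correct. You first reduce to a single skew class (correctly, since $\mathrm{Sc}$ is determined pointwise), then encode the four possible traces of a subtransversal on $\omega$ as elements of $\mathbb{F}_2^2$ with $\phi(\emptyset)=0$. Your case check that $\phi((S'+S'')\cap\omega) = \phi(s')+\phi(s'')$ is sound: when $s'=s''$ the symmetric difference is empty; when exactly one is empty the sum is the nonempty one; when both are distinct singletons the symmetric difference is a skew pair, so Definition~\ref{def:sum_subtransv} flips to the third element, matching the Klein-group rule that two distinct nonzero vectors sum to the third. The indicator $\mathbf{1}[\omega\in\mathrm{Sc}(S\cup X)]$ being $\ell_s\circ\phi$ for a linear functional $\ell_s$ (the zero functional if $s=\emptyset$, otherwise the functional killing the line through $\phi(s)$) is also correct: when $s\neq\emptyset$, $\omega\in\mathrm{Sc}(S\cup X)$ precisely when $X\cap\omega$ is a nonempty singleton distinct from $s$, i.e.\ $\phi(X\cap\omega)\notin\{0,\phi(s)\}$. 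Linearity of $\ell_s$ then yields the mod-$2$ identity of indicator functions, which is exactly the symmetric-difference statement.

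This is a genuinely different route from the paper's proof, which verifies both inclusions directly by a combinatorial case split on $\omega\cap S'$ and $\omega\cap S''$ (using that $\omega\cap S = \{x\}$ and $\omega\cap(S'+S'')=\{y\}$ are distinct, then observing that either exactly one of $\omega\cap S',\omega\cap S''$ is empty, or they are $\{x\}$ and $\{z\}$ in some order). Your version hides the case analysis inside the verification that $\phi$ intertwines $+$ with vector addition, and then gets both inclusions simultaneously from a single linearity computation. The payoff of your phrasing is conceptual transparency — it makes clear that the definition of $+$ in Definition~\ref{def:sum_subtransv} is precisely the transport of $\mathbb{F}_2^2$-addition along traces, which ties in naturally with the binary-representability theme of the paper (cf.\ Propositions~\ref{prop:skew_classes_cycles} and~\ref{prop:bin_tight_implies_IAS}). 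The paper's proof is shorter to state self-containedly and avoids introducing the auxiliary vector-space structure, but is less modular if one later wants to reuse the linearity observation.
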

\begin{Proof}
First, let $\omega \in \mathrm{Sc}(S \cup (S'+S''))$. Then $\omega \cap S = \{x\}$ and $\omega \cap (S'+S'') = \{y\}$ for some distinct $x, y \in \omega$. Let $\omega = \{x,y,z\}$.

If $\omega \cap S' = \emptyset$, then $\omega \cap S'' = \{y\}$ (and similarly if $\omega \cap S'' = \emptyset$). If $\omega \cap S'$ and $\omega \cap S''$ are both nonempty, then they are equal to $\{x\}$ and $\{z\}$ in some order.

Thus, in all cases $\omega$ is in exactly one of $\mathrm{Sc}(S \cup S')$ and $\mathrm{Sc}(S \cup S'')$. Therefore, $\omega \in \mathrm{Sc}(S \cup S') \sdif \mathrm{Sc}(S \cup S'')$.

Conversely, let $\omega \in \mathrm{Sc}(S \cup S') \sdif \mathrm{Sc}(S \cup S'')$. Then $\omega$ is in exactly one of $\mathrm{Sc}(S \cup S')$ and $\mathrm{Sc}(S \cup S'')$. Thus $\omega \cap (S \cup S')$ and $\omega \cap (S \cup S'')$ are equal to $\{x\}$ and $\{x,y\}$ in some order, where $x \in S$. Assume without loss of generality that $\omega \cap (S \cup S') = \{x\}$ and $\omega \cap (S \cup S'') = \{x,y\}$, and therefore $y \in S''$. If $x \notin S'$, then $\omega \cap (S'+S'') = \{y\}$. If $x \in S'$, then $\omega \cap (S'+S'') = \{z\}$. In both cases we obtain $\omega \in \mathrm{Sc}(S \cup (S'+S''))$.
\end{Proof}

Note that, under the assumptions of Lemma~\ref{lem:sdif_skew_classes}, if $|\mathrm{Sc}(S \cup S')|$ and $|\mathrm{Sc}(S \cup S'')|$ are even, then so is $|\mathrm{Sc}(S \cup (S'+S''))|$.

We recall that for a matroid $M$, the \emph{cycle space} $\mathcal{CS}(M)$ of $M$ is the span of the circuits of $M$ under symmetric difference. Each element of $\mathcal{CS}(M)$ is called a \emph{cycle}. It is well known that every cycle of a binary matroid $M$ is the union of some mutually disjoint circuits of $M$ (by convention, the cycle $\emptyset$ is the union of zero circuits).

%For a binary matroid $M$, it is well known that the circuits of $M$ are the cycles of $\mathcal{CS}(M) \setminus \{ \emptyset \}$ that are minimal with respect to inclusion.

We recall the following key property of binary tight $3$-matroids.
\begin{proposition} [\cite{BT/IsotropicMatI}] \label{prop:skew_classes_cycles}
Every binary tight $3$-matroid $Z$ is sheltered by a binary matroid $M$ in which every skew class of $Z$ is a cycle of $M$.
\end{proposition}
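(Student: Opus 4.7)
The plan is to leverage the equivalence, established in \cite{BT/IsotropicMatI}, between binary tight $3$-matroids and isotropic systems, which lets me read off the desired sheltering matroid directly from an isotropic-system representation of $Z$.

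The first step is to realize $Z$ as the $3$-matroid arising from an isotropic system: a $GF(2)$-vector space $K = \bigoplus_{\omega \in \Omega} K_\omega$ carrying a nondegenerate symplectic form, with each $K_\omega$ two-dimensional, together with a totally isotropic Lagrangian subspace $L \subseteq K$ of dimension $|\Omega|$. Under this identification, the ground set $U$ becomes $\bigsqcup_\omega (K_\omega \setminus \{0\})$, and each skew class $\omega$ is realized as the set of nonzero vectors $\{a, b, a+b\}$ of the plane $K_\omega$.

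The second step is to read off the sheltering matroid from this data. Let $M$ be the binary matroid on $U$ whose cycle space is the kernel of the natural $GF(2)$-linear surjection $\pi \colon GF(2)^U \to K$ that sends the standard basis vector $e_u$ to the element $u \in K$ labelling it. Standard isotropic-system theory identifies $M[T]$ with $Z[T]$ for every transversal $T$, so $M$ indeed shelters $Z$. For each skew class $\omega = \{a, b, a+b\}$, the three vectors sum to zero in $K_\omega$, hence $\omega \in \ker \pi = \mathcal{CS}(M)$, i.e., $\omega$ is a cycle of $M$.

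The main obstacle is the structural equivalence in the first step. My approach would be to fix a basis $B \in \mathcal{B}(Z)$, which is a transversal since $Z$ is tight and nondegenerate, and to build $K$ and $L$ from $Z[B]$: the columns of a representing matrix of the binary matroid $Z[B]$ generate $L$, while the distinguished elements in each skew class $\omega$ --- guaranteed by Condition~\ref{item:char_tight_nullity} of Proposition~\ref{prop:char_tight} --- together with the orthogonality framework of Section~\ref{sec:sh_mm_ortho} pin down the remaining vectors completing each plane $K_\omega$. The uniqueness of tight extensions (Proposition~\ref{prop:unique_tight_k+1_mm}) then guarantees that the reconstructed isotropic system yields $Z$ back.
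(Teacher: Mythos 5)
Your overall strategy (exhibit a concrete sheltering matroid and observe that the three elements of each skew class sum to zero) is the right one, and it is the route the paper implicitly takes via Proposition~\ref{prop:bin_tight_implies_IAS}: there one gets a representing matrix of the form $[\,I \mid A \mid A+I\,]$, and the columns belonging to a skew class are $e_v$, $A_v$, $A_v+e_v$, which visibly sum to $0$, so the skew class is a cycle.

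However, your specific definition of $M$ is incorrect, and the error is not cosmetic. You set $\mathcal{CS}(M) = \ker\bigl(\pi\colon GF(2)^U \to K\bigr)$, where $\pi(e_u) = u$. This matroid does not depend on the Lagrangian subspace $L$ at all, so it cannot recover the multimatroid structure of $Z$, which is encoded entirely by $L$. More concretely, for any transversal $T$ of $\Omega$, the elements of $T$ are one nonzero vector in each $K_\omega$, and since $K = \bigoplus_\omega K_\omega$ these vectors are linearly independent in $K$; hence $\pi$ restricted to $GF(2)^T$ is injective, so $\mathcal{CS}(M[T]) = \ker\pi \cap 2^T = \{\emptyset\}$. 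In other words, $M[T]$ is a free matroid for \emph{every} transversal $T$, whereas $Z[T]$ has nonempty circuits whenever $T$ is not a basis of $Z$. So $M$ does not shelter $Z$, and the middle step of your argument (``Standard isotropic-system theory identifies $M[T]$ with $Z[T]$'') is false for this $M$. A rank count shows the same thing: your $\pi$ is onto $K$, so $\dim\ker\pi = |U| - 2|\Omega| = |\Omega|$ and $r(M) = |U| - |\Omega| = 2|\Omega|$, whereas the isotropic matroid has rank $|\Omega|$.

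The fix is to take $\pi\colon GF(2)^U \to K/L$ rather than $\pi\colon GF(2)^U \to K$, i.e., define $\mathcal{CS}(M) = \{X \subseteq U \mid \sum_{u \in X} u \in L\}$. Your observation that $a + b + (a+b) = 0$ still puts every skew class in this kernel (since $0 \in L$), and now $\mathcal{CS}(M[T]) = \{X \subseteq T \mid \sum_{u\in X} u \in L\}$, which is the cycle space of the matroid the isotropic system assigns to the complete vector $T$, so $M$ really does shelter $Z$. Alternatively, and more in the spirit of the paper, invoke Proposition~\ref{prop:bin_tight_implies_IAS} directly: it hands you a binary representing matrix $[\,I \mid A \mid A+I\,]$ for a sheltering matroid, and the skew-class columns manifestly sum to zero. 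That route avoids reconstructing the isotropic-system data (your sketched ``main obstacle'' paragraph), which is not needed since Proposition~\ref{prop:bin_tight_implies_IAS} already packages the hard part.
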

In Proposition~\ref{prop:bin_tight_implies_IAS} we will recall a standard matrix representation of the binary matroid $M$ of Proposition~\ref{prop:skew_classes_cycles}.

For a multimatroid $Z$, we define the \emph{cycle space} $\mathcal{CS}(Z)$ of $Z$ as $\{ C \in \mathcal{CS}(Z[T]) \mid T \in \mathcal{T}(\Omega) \}$. Each element of $\mathcal{CS}(Z)$ is called a \emph{cycle}. Note that, among cycles of $Z[T]$ with $T \in \mathcal{T}(\Omega)$, the operations of $+$ and $\sdif$ coincide.

\begin{lemma} \label{lem:sum_cycles}
Let $Z$ be a binary tight $3$-matroid. For every $C, C' \in \mathcal{CS}(Z)$, $C+C' \in \mathcal{CS(Z)}$.
\end{lemma}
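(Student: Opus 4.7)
The plan is to lift everything to the binary matroid that shelters $Z$ and use the fact that the cycle space of a binary matroid is closed under symmetric difference. Concretely, by Proposition~\ref{prop:skew_classes_cycles} we may fix a binary matroid $M$ over $U$ that shelters $Z$ and in which every skew class $\omega \in \Omega$ is a cycle of $M$. I want to show first that every $C \in \mathcal{CS}(Z)$ is a cycle of $M$: by definition $C$ is a cycle of some $Z[T] = M[T]$, hence a symmetric difference of circuits of $M[T]$; since circuits of $M[T]$ are precisely circuits of $M$ contained in $T$, and $M$ is binary, $C$ is a cycle of $M$.

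Given $C, C' \in \mathcal{CS}(Z)$, step two is to rewrite $C+C'$ according to Definition~\ref{def:sum_subtransv} as
$$C+C' \;=\; C \sdif C' \sdif \Bigl(\bigcup \mathrm{Sc}(C \sdif C')\Bigr).$$
The set $\bigcup \mathrm{Sc}(C \sdif C')$ is a disjoint union of skew classes, each of which is a cycle of $M$ by the choice of $M$, so this union is itself a cycle of $M$ (disjoint unions of cycles are cycles in any matroid). Combined with $C \sdif C' \in \mathcal{CS}(M)$ from the first step and the closure of $\mathcal{CS}(M)$ under $\sdif$ (since $M$ is binary), we conclude that $C+C'$ is a cycle of $M$.

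Step three is to verify that $C+C'$ can be realized as a cycle of $Z[T'']$ for some transversal $T''$. The remark after Definition~\ref{def:sum_subtransv} already guarantees that $C+C' \in \mathcal{S}(\Omega)$, i.e., $C+C'$ is a subtransversal, so it can be extended to a transversal $T''$ of $\Omega$. Since $C+C' \subseteq T''$ is a cycle of $M$, it is a disjoint union of circuits of $M$ each of which is contained in $T''$; these are circuits of $M[T''] = Z[T'']$, so $C+C'$ is a cycle of $Z[T'']$, hence of $Z$.

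I expect no real obstacle: the only subtle point is ensuring that the symmetric difference with $\bigcup \mathrm{Sc}(C \sdif C')$ stays inside $\mathcal{CS}(M)$, and this is immediate once one observes that each skew class is itself a cycle (which is exactly the content of Proposition~\ref{prop:skew_classes_cycles}). The three-element structure of the skew classes plays only an implicit role, via the fact that the definition of $+$ is tailored so that $C+C'$ remains a subtransversal.
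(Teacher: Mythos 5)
Your proof is correct and is essentially a spelled-out version of the paper's own one-line argument, which appeals to exactly the same three ingredients: Proposition~\ref{prop:skew_classes_cycles}, the formula in Definition~\ref{def:sum_subtransv}, and the fact that $\mathcal{CS}(M[X]) = \mathcal{CS}(M) \cap 2^X$ for binary $M$. (A small stylistic nit: closure of $\mathcal{CS}(M)$ under $\sdif$, and the inclusion $\mathcal{CS}(M[T]) \subseteq \mathcal{CS}(M)$, hold for any matroid by the very definition of cycle space as the $\sdif$-span of circuits; the binary hypothesis is genuinely needed only in your step three, to decompose the cycle $C+C'$ into disjoint circuits lying inside $T''$.)
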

\begin{Proof}
Follows directly from Proposition~\ref{prop:skew_classes_cycles} and Definition~\ref{def:sum_subtransv} and the fact that for binary matroids $M$, $\mathcal{CS}(M[X]) = \mathcal{CS}(M) \cap 2^X$ for all $X \subseteq E(M)$.
\end{Proof}
% Is the above lemma provable without using Proposition~\ref{prop:skew_classes_cycles} (and using weakly-binary tight $3$-matroids, requiring only that every transversal is binary, although this is equivalent to binary tight $3$-matroids using excluded minor theorem for tight $3$-matroids)?

\begin{lemma} \label{lem:even_skew_pairs}
Let $Z$ be a binary tight $3$-matroid. For every $C,C' \in \mathcal{CS}(Z)$, $C \cup C'$ contains an even number of skew pairs.
\end{lemma}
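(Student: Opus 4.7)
The plan is to introduce a $GF(2)$-bilinear form $\beta$ on $V = GF(2)^U$ that, when restricted to subtransversals, computes $|\mathrm{Sc}(X \cup Y)| \bmod 2$, and then to show this form vanishes on the cycle space of the binary sheltering matroid of $Z$.

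Concretely, working modulo $2$, I would define
\[
\beta(X, Y) = |X \cap Y| + \sum_{\omega \in \Omega} |X \cap \omega| \cdot |Y \cap \omega|.
\]
Each functional $X \mapsto |X \cap \omega|$ is $GF(2)$-linear and $(X, Y) \mapsto |X \cap Y|$ is bilinear, so $\beta$ is $GF(2)$-bilinear on $V$. Case-checking on $|\omega \cap X|, |\omega \cap Y| \in \{0, 1\}$ (the four combinations of empty versus singleton intersection) shows that for subtransversals $X, Y$ one has $\beta(X, Y) \equiv |\mathrm{Sc}(X \cup Y)| \pmod 2$, since a given $\omega$ contributes $1$ exactly when $X \cap \omega$ and $Y \cap \omega$ are distinct singletons. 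A direct computation also gives $\beta(\omega, Y) = |\omega \cap Y| + 3|Y \cap \omega| \equiv 0$ for every skew class, so $W := \mathrm{span}\{\omega : \omega \in \Omega\}$ lies in the radical of $\beta$, and $\beta$ descends to a form $\bar\beta$ on $V/W$.

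By Proposition~\ref{prop:skew_classes_cycles}, $Z$ is sheltered by a binary matroid $M$ with every $\omega$ a cycle of $M$; thus $C, C' \in \mathcal{CS}(Z) \subseteq \mathcal{CS}(M)$ and $W \subseteq \mathcal{CS}(M)$. The crux of the argument is to show that $\mathcal{CS}(M)$ is totally isotropic under $\beta$, and this is where the \emph{tight $3$-matroid} hypothesis is essential: the standard matrix representation of binary tight $3$-matroids (to be given as Proposition~\ref{prop:bin_tight_implies_IAS}) identifies $\mathcal{CS}(M)/W$ with the totally isotropic subspace of the associated isotropic system---a maximal isotropic subspace of $K^{|\Omega|}$, where $K = GF(2)^2$ carries the standard symplectic form $\langle (a,b),(c,d)\rangle = ad+bc$---and under this identification $\bar\beta$ agrees with the symplectic pairing. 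Since that pairing vanishes on the totally isotropic subspace, $\beta(C, C') = 0$, and hence $|\mathrm{Sc}(C \cup C')| \equiv 0 \pmod 2$. The main obstacle is this final identification; everything else is elementary $GF(2)$-linear algebra.
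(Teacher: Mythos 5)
Your approach is correct and genuinely different from the paper's. The paper proves Lemma~\ref{lem:even_skew_pairs} by a minimum-counterexample argument closely parallel to the classical proof that circuits and cocircuits of a binary matroid meet in an even number of elements (cf.\ \cite[Prop.~9.1.3]{Oxley/MatroidBook-2nd}): reduce to circuits, invoke Lemma~\ref{lem:two_skew_pairs} to produce a circuit $C''$ with $\mathrm{Sc}(C\cup C'')=\{\omega_1,\omega_2\}$, then use Lemmas~\ref{lem:sum_cycles} and~\ref{lem:sdif_skew_classes} to shave two skew pairs off a putative odd counterexample. That argument needs only the weak structural fact (Proposition~\ref{prop:skew_classes_cycles}) that some sheltering binary matroid has the skew classes as cycles, and it stays at the level of circuit combinatorics. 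Your route is linear-algebraic: the form $\beta$ is set up correctly, it does compute $|\mathrm{Sc}(X\cup Y)|\bmod 2$ on pairs of subtransversals, and each skew class lies in its radical. The payoff is that once you invoke the explicit $[\,I\mid A\mid A+I\,]$ representation of Proposition~\ref{prop:bin_tight_implies_IAS} (which appears later in the paper, but its proof in \cite{BT/IsotropicMatI} does not depend on this lemma, so there is no circularity), the verification you flag as ``the main obstacle'' is in fact a short direct computation, not an identification one needs to take on faith: writing a cycle as $(v_1,v_2,v_3)$ with $v_1=A(v_2+v_3)+v_3$, one finds $\beta(v,v')=\sum_{i\ne j}v_i\cdot v_j'$, and substituting the cycle relation and using symmetry of $A$ makes all terms cancel mod~$2$. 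You do not even need to pass through the isotropic-system formalism or the $K=GF(2)^2$ pairing; the computation closes directly. What your method buys is concision and the conceptual clarity of ``the cycle space is totally isotropic for $\beta$''; what the paper's method buys is that it never needs the strong normal form of Proposition~\ref{prop:bin_tight_implies_IAS}, only the existence statement of Proposition~\ref{prop:skew_classes_cycles}, and it keeps the section self-contained before the matrix machinery is introduced. One small point to make explicit in a polished version: Proposition~\ref{prop:bin_tight_implies_IAS} requires a basis $T_1$ of $Z$, which exists by tightness (Condition~\ref{item:char_tight_nullity} of Proposition~\ref{prop:char_tight} lets you greedily build a transversal of nullity zero); if $U=\emptyset$ the lemma is vacuous.
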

\begin{Proof}
Let $X = \{ |\mathrm{Sc}(C \cup C')| \mid C,C' \in \mathcal{CS}(Z) \}$. Assume to the contrary that $X$ contains at least one odd integer. Let $t$ be the smallest odd integer of $X$. Let $C,C' \in \mathcal{CS}(Z)$ with $t = |\mathrm{Sc}(C \cup C')|$. We may assume $C$ and $C'$ are both circuits, since if $C$ is the union of disjoint nonempty cycles $C_1$ and $C_2$, then $C_1 \cup C'$ or $C_2 \cup C'$ contains an odd number of skew pairs not bigger than $t$ (similarly for $C'$). By the definition of a multimatroid, $C \cup C'$ does not contain precisely one skew pair. So $t \geq 3$. Let $\omega_1, \omega_2, \omega_3$ be distinct skew classes that contain skew pairs of $C \cup C'$. By Lemma~\ref{lem:two_skew_pairs} (recall that $C$ and $C'$ are circuits), there is a circuit $C''$ of $Z$ such that $C \cup C''$ contains exactly two skew pairs where one of them is a subset of $\omega_1$ and the other a subset of $\omega_2$. By Lemma~\ref{lem:sum_cycles}, $C''' := C' + C''$ is a cycle. By Lemma~\ref{lem:sdif_skew_classes}, $C \cup C'''$ contains exactly two skew pairs less than $C \cup C'$ --- a contradiction of the minimality of $t$.
\end{Proof}

Lemma~\ref{lem:even_skew_pairs} generalizes the following well-known property of binary matroids, see, e.g., \cite[Theorem~9.1.2(ii)]{Oxley/MatroidBook-2nd}.
\begin{corollary} \label{cor:bin_mat_even_inters}
Let $M$ be a binary matroid. For all $C \in \mathcal{C}(M)$ and $C' \in \mathcal{C}(M^*)$, $|C \cap C'|$ is even.
\end{corollary}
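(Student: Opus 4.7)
The plan is to lift the statement to the binary tight $3$-matroid $\mathcal{Z}_{M,3}$ and then apply Lemma~\ref{lem:even_skew_pairs}. Since every binary matroid is quaternary, Proposition~\ref{prop:quat_mat} supplies the tight $3$-matroid $\mathcal{Z}_{M,3}$ together with the transversals $T_1, T_2, T_3$ and injections $\varphi_1, \varphi_2, \varphi_3$ from Definition~\ref{def:free_sum}. The first step is to record that $\mathcal{Z}_{M,3}$ is itself binary when $M$ is binary: starting from a binary representation of $M$ I would write down, by the standard bicycle/isotropic-system recipe over $GF(2)$, a binary tight $3$-matroid $Z'$ extending $\mathcal{Z}_M$, and then appeal to the uniqueness part of Proposition~\ref{prop:unique_tight_k+1_mm} to conclude $Z' = \mathcal{Z}_{M,3}$.

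Next, I transport the given circuits $C \in \mathcal{C}(M)$ and $C' \in \mathcal{C}(M^*)$ into cycles of $\mathcal{Z}_{M,3}$. Setting $D := \varphi_2(C)$ and $D' := \varphi_1(C')$, and using that $\mathcal{Z}_{M,3}[T_2]$ is isomorphic to $M$ via $\varphi_2$ while $\mathcal{Z}_{M,3}[T_1]$ is isomorphic to $M^*$ via $\varphi_1$, both $D$ and $D'$ are circuits, and in particular cycles, of $\mathcal{Z}_{M,3}$.

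Finally, I match skew pairs of $D \cup D'$ with elements of $C \cap C'$. The skew class of $\mathcal{Z}_{M,3}$ attached to $e \in E(M)$ is $\omega_e = \{\varphi_1(e), \varphi_2(e), \varphi_3(e)\}$, and its intersection with $D \cup D'$ equals $\{\varphi_2(e)\}$ when $e \in C \setminus C'$, equals $\{\varphi_1(e)\}$ when $e \in C' \setminus C$, equals $\{\varphi_1(e), \varphi_2(e)\}$ when $e \in C \cap C'$, and is empty otherwise. Hence $|\mathrm{Sc}(D \cup D')| = |C \cap C'|$, and Lemma~\ref{lem:even_skew_pairs} forces this integer to be even.

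The main obstacle is the binariness of $\mathcal{Z}_{M,3}$: the excerpt gives existence and uniqueness of the tight $3$-matroid extension but does not isolate the representability claim, so the fix is to exhibit an explicit binary tight $3$-matroid extending $\mathcal{Z}_M$ and let Proposition~\ref{prop:unique_tight_k+1_mm} identify it with $\mathcal{Z}_{M,3}$; once this is in place the remaining reasoning is a direct bookkeeping argument on skew classes.
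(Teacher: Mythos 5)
Your proof takes essentially the same route as the paper: pass to $\mathcal{Z}_{M,3}$, observe that $\varphi_2(C)$ and $\varphi_1(C')$ are circuits there, and apply Lemma~\ref{lem:even_skew_pairs}, noting that the skew pairs in $\varphi_2(C)\cup\varphi_1(C')$ correspond bijectively to elements of $C\cap C'$. The paper states this very tersely and does not explicitly flag that $\mathcal{Z}_{M,3}$ must be verified to be binary before Lemma~\ref{lem:even_skew_pairs} applies; you correctly identify this as the one point requiring care and supply the right fix (build a binary tight extension via the isotropic-matroid construction and invoke the uniqueness in Proposition~\ref{prop:unique_tight_k+1_mm}, exactly as the paper does elsewhere in Lemma~\ref{lem:char_strongly_bin} and the surrounding discussion).
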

\begin{Proof}
Let $Z = \mathcal{Z}_{M,3}$. Then $\varphi_2(C), \varphi_1(C') \in \mathcal{C}(Z)$, where $\varphi_i$ is as in Definition~\ref{def:free_sum}. By Lemma~\ref{lem:even_skew_pairs}, $\varphi_2(C) \cup \varphi_1(C')$ contains an even number of skew pairs. Thus, $|C \cap C'|$ is even.
\end{Proof}

\begin{lemma} \label{lem:even_in_CS}
Let $Z$ be a binary tight $3$-matroid and let $S \in \mathcal{S}(\Omega)$. If for every $C \in \mathcal{C}(Z)$, $S \cup C$ contains an even number of skew pairs, then $S \in \mathcal{CS}(Z)$.
%In other words, $S \in \mathcal{CS}(Z)$.
\end{lemma}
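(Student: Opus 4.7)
The plan is to induct on $|S|$, repeatedly peeling off a circuit contained in $S$ until nothing is left. The base case $S = \emptyset$ is immediate since $\emptyset \in \mathcal{CS}(Z[T])$ for every transversal $T$, and so $\emptyset \in \mathcal{CS}(Z)$.

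For the inductive step I would assume $S \neq \emptyset$ and the claim for all strictly smaller subtransversals satisfying the hypothesis. The hypothesis says $|\mathrm{Sc}(S \cup C)|$ is even, hence different from $1$, for every circuit $C$ of $Z$, so Lemma~\ref{lem:diff_not_1_dep} (applicable because $Z$ is tight and, being a $3$-matroid, nondegenerate) produces a circuit $C_1 \subseteq S$. I then set $S' := S \sdif C_1 = S \setminus C_1$, which is still a subtransversal with $|S'|<|S|$. Since $S'$ is a subtransversal, $\mathrm{Sc}(S \sdif C_1) = \emptyset$, and Definition~\ref{def:sum_subtransv} therefore gives $S + C_1 = S'$. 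The key verification is that $S'$ still satisfies the parity hypothesis: for every $C \in \mathcal{C}(Z)$, Lemma~\ref{lem:sdif_skew_classes} gives
$$\mathrm{Sc}(C \cup S') = \mathrm{Sc}(C \cup (S + C_1)) = \mathrm{Sc}(C \cup S) \sdif \mathrm{Sc}(C \cup C_1),$$
where the first set on the right is even by hypothesis on $S$ and the second is even by Lemma~\ref{lem:even_skew_pairs} applied to the two circuits $C$ and $C_1$, so their symmetric difference is even as well. The inductive hypothesis then yields $S' \in \mathcal{CS}(Z)$.

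To reassemble $S = S' \sdif C_1$ as a cycle, I would invoke binary shelteredness: by Proposition~\ref{prop:skew_classes_cycles}, $Z$ is sheltered by a binary matroid $M$, and $\mathcal{CS}(Z[T]) = \mathcal{CS}(M) \cap 2^T$ for every transversal $T$ (the fact used in the proof of Lemma~\ref{lem:sum_cycles}). Choosing any transversal $T$ with $S \subseteq T$, both $S'$ and $C_1$ lie in $T$; from $S' \in \mathcal{CS}(Z)$ we deduce $S' \in \mathcal{CS}(M)$, hence $S' \in \mathcal{CS}(M) \cap 2^T = \mathcal{CS}(Z[T])$, while $C_1 \in \mathcal{C}(Z[T]) \subseteq \mathcal{CS}(Z[T])$. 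Since cycle spaces of matroids are closed under $\sdif$, $S = S' \sdif C_1 \in \mathcal{CS}(Z[T]) \subseteq \mathcal{CS}(Z)$. The main obstacle I anticipate is the parity-inheritance step, which depends precisely on the interplay between the $3$-matroid algebra of Lemma~\ref{lem:sdif_skew_classes} and the pairwise-even conclusion of Lemma~\ref{lem:even_skew_pairs}; once that is in place, the descent plus a single appeal to binary shelteredness closes the argument.
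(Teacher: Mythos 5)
Your proposal is correct and follows essentially the same route as the paper: both arguments peel off a circuit $C_1 \subseteq S$ supplied by Lemma~\ref{lem:diff_not_1_dep}, preserve the even-skew-pair hypothesis for $S' = S \setminus C_1$ via Lemmas~\ref{lem:sdif_skew_classes} and~\ref{lem:even_skew_pairs}, and iterate/induct down to $\emptyset$. The only cosmetic difference is in the reassembly: the paper simply notes that the peeled circuits are mutually disjoint and all lie inside any transversal containing $S$, so their union is automatically a cycle, whereas you invoke Proposition~\ref{prop:skew_classes_cycles} to pass through $\mathcal{CS}(M)$ --- a bit more machinery than is strictly required, but perfectly valid.
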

\begin{Proof}
If $S = \emptyset$, then $S$ is the union of zero circuits. Assume that $S$ is nonempty. Assume that for every $C \in \mathcal{C}(Z)$, $S \cup C$ contains an even number of skew pairs, i.e., $|\mathrm{Sc}(S \cup C)|$ is even. By Lemma~\ref{lem:diff_not_1_dep}, $S$ is a dependent set. Hence, $C' \subseteq S$ for some $C' \in \mathcal{C}(Z)$. Consider $S' = S \sdif C'$. Since $S' = S+C'$, we have by Lemma~\ref{lem:sdif_skew_classes} that for all $C \in \mathcal{C}(Z)$, $\mathrm{Sc}(C \cup S') = \mathrm{Sc}(C \cup S) \sdif \mathrm{Sc}(C \cup C')$. Since $|\mathrm{Sc}(C \cup S)|$ is even and $|\mathrm{Sc}(C \cup C')|$ is even by Lemma~\ref{lem:even_skew_pairs}, we have that $|\mathrm{Sc}(C \cup S')|$ is even. By iteration on $S'$, we find that $S$ is the union of some mutually disjoint circuits of $Z$.
\end{Proof}

%By Bouchet, maximally isotropic subspace
By Lemma~\ref{lem:even_skew_pairs} and Lemma~\ref{lem:even_in_CS} we immediately obtain the following.
\begin{theorem} \label{thm:max_isotropic_skew_pairs}
Let $Z$ be a binary tight $3$-matroid and let $S \in \mathcal{S}(\Omega)$. Then $S \in \mathcal{CS}(Z)$ if and only if for every $C \in \mathcal{CS}(Z)$, $S \cup C$ contains an even number of skew pairs.
\end{theorem}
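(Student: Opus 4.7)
The plan is to observe that the two directions of the biconditional are essentially immediate from the two preceding lemmas, and the work has really already been done. I would present it as two short implications.

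For the forward direction, assume $S \in \mathcal{CS}(Z)$. I want to show that for every $C \in \mathcal{CS}(Z)$, $S \cup C$ contains an even number of skew pairs. This is precisely the content of Lemma~\ref{lem:even_skew_pairs} applied to the pair $(S,C)$, so no additional argument is needed.

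For the reverse direction, assume that $S \cup C$ contains an even number of skew pairs for every $C \in \mathcal{CS}(Z)$. Since every circuit of $Z$ is a cycle, the hypothesis holds in particular for every $C \in \mathcal{C}(Z) \subseteq \mathcal{CS}(Z)$. Lemma~\ref{lem:even_in_CS} then yields $S \in \mathcal{CS}(Z)$.

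There is no real obstacle here; the substantive content lies in Lemmas~\ref{lem:even_skew_pairs} and~\ref{lem:even_in_CS}, and the only subtle point worth flagging is that the converse direction uses the hypothesis only at circuits, whereas the stated hypothesis ranges over all cycles — the inclusion $\mathcal{C}(Z) \subseteq \mathcal{CS}(Z)$ makes this weakening free. Accordingly I would keep the proof to two or three lines, essentially restating the two lemmas and noting this inclusion.
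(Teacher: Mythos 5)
Your proof is correct and follows exactly the route the paper takes, which states only that the theorem ``immediately'' follows from Lemma~\ref{lem:even_skew_pairs} and Lemma~\ref{lem:even_in_CS}. You have simply made explicit the (trivial) observation that the hypothesis over all cycles specializes to circuits via $\mathcal{C}(Z) \subseteq \mathcal{CS}(Z)$, which is indeed the only thing that needs saying.
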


\subsection{Orienting transversals for binary tight 3-matroids}

The following result strengthens the only-if direction of Theorem~\ref{thm:eul_char} for the case where $Z$ is a binary tight $3$-matroid. This only-if direction is a reformulation of \cite[Theorem~6.5]{TutteMartinOrientingVectors/Bouchet91} from the context of isotropic systems. We provide a different proof.
\begin{theorem} \label{thm:char_eul_bin}
Let $Z$ be a binary tight $3$-matroid and let $T \in \mathcal{T}(\Omega)$. Then $T \in \ort(Z)$ if and only if for all cycles $C$ of $Z$, $|C \cap T|$ is even.
\end{theorem}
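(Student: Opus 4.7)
The backward direction is immediate from Theorem~\ref{thm:eul_char}: every circuit of $Z$ is a cycle, so if $|C \cap T|$ is even for every cycle $C$, then $|C \cap T| \neq 1$ for every circuit, giving $T \in \ort(Z)$. For the forward direction, assume $T \in \ort(Z)$. My plan is to first reduce to circuits: each cycle $C$ of $Z$ lies in $Z[T']$ for some transversal $T'$, and since $Z$ is binary so is $Z[T']$, so $C$ decomposes as a disjoint union $C = C_1 \sqcup \cdots \sqcup C_k$ of circuits of $Z[T']$ (using the standard fact that in a binary matroid every cycle is a disjoint union of circuits). Then $|C \cap T| = \sum_i |C_i \cap T|$, so it suffices to prove $|C_i \cap T|$ is even for every circuit $C_i$ of $Z$.

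To handle a circuit $C$, suppose for contradiction that $|C \cap T|$ is odd. By Theorem~\ref{thm:eul_char}, $|C \cap T| \neq 1$, so $|C \cap T| \geq 3$. Choose two skew classes $\omega_1, \omega_2$ with $C \cap \omega_i = T \cap \omega_i$ (possible since $|C \cap T| \geq 2$), and apply Lemma~\ref{lem:two_skew_pairs} to obtain a circuit $C'$ of $Z$ with $\mathrm{Sc}(C \cup C') = \{\omega_1, \omega_2\}$. Then $C$ and $C'$ coincide on every skew class outside $\{\omega_1, \omega_2\}$, while at each $\omega_i$ the unique element of $C' \cap \omega_i$ differs from the element $T \cap \omega_i = C \cap \omega_i$; since $|T \cap \omega_i| = 1$, this forces the element of $C' \cap \omega_i$ to lie outside $T$. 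Hence $|C' \cap T| = |C \cap T| - 2$, which is again odd.

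Iterating this reduction produces a sequence of circuits of $Z$ whose intersections with $T$ drop by two at each step while remaining odd, eventually reaching a circuit $C^*$ with $|C^* \cap T| = 1$; this contradicts Theorem~\ref{thm:eul_char}. The main obstacle is the bookkeeping around the swap in Lemma~\ref{lem:two_skew_pairs}: one must confirm that the element freshly chosen by the lemma at each $\omega_i$ necessarily lies outside $T$ (which uses $T$ being a transversal, so that $|T \cap \omega_i| = 1$), so that $|\cdot \cap T|$ drops by exactly two at each iteration rather than staying fixed or changing by some other amount.
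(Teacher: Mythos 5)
The backward direction is fine. The forward direction has a genuine gap, and it is precisely at the point you flagged as "bookkeeping."

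The claim that \emph{``$C$ and $C'$ coincide on every skew class outside $\{\omega_1,\omega_2\}$''} does not follow from $\mathrm{Sc}(C\cup C') = \{\omega_1,\omega_2\}$. By definition, $\mathrm{Sc}(C\cup C')=\{\omega_1,\omega_2\}$ only tells you that for $\omega'\notin\{\omega_1,\omega_2\}$ we have $|(C\cup C')\cap\omega'|\leq 1$. Since $C$ and $C'$ are subtransversals, this forces them to agree on $\omega'$ \emph{whenever both are nonempty there}, but it is entirely possible that $C'\cap\omega'\neq\emptyset$ while $C\cap\omega'=\emptyset$, or vice versa. Lemma~\ref{lem:two_skew_pairs} gives no control over which skew classes $C'$ actually meets: tracing its proof, $C'$ lives in $B\sdif p'$ for some basis $B$, and $B$ is unrelated to $T$ outside $C\setminus\omega$. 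So $C'$ may pick up fresh elements of $T$ in skew classes untouched by $C$, and $|C'\cap T|$ could be larger than, equal to, or of a different parity than $|C\cap T|-2$. Your iteration therefore does not descend, and the contradiction at $|C^*\cap T|=1$ is never reached. (A secondary issue: even if you could force $|C'\cap T|$ to drop, it need not remain odd without the coincidence claim, so the induction still would not close.)

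What the paper's proof does differently, and what makes it work, is that it does not use Lemma~\ref{lem:two_skew_pairs} at all. It takes the cycle $C$ minimizing $|C\cap T|$ among cycles with odd intersection, chooses a transversal $T'$ \emph{disjoint from} $T$ with $C\setminus T\subseteq T'$, and uses tightness at one skew class $\omega$ meeting $C\cap T$ to produce a circuit $C'\subseteq (T'\setminus\omega)\cup\{x\}$ with $x\in C'$. The crucial payoff of this construction (which you do not get from Lemma~\ref{lem:two_skew_pairs}) is that $C'\cap T=\emptyset$: either $x\in T$, in which case $|C'\cap T|=1$ already contradicts Theorem~\ref{thm:eul_char}, or $x\notin T$ and then $C'\subseteq T'\cup\{x\}$ misses $T$ entirely. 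With $C'\cap T=\emptyset$, every skew pair of $C\cup C'$ lies in a skew class meeting $C\cap T$, so passing to the \emph{cycle} $C+C'$ (Lemma~\ref{lem:sum_cycles}) removes exactly $k=|\mathrm{Sc}(C\cup C')|$ elements from $C\cap T$ and adds none back; $k$ is even by Lemma~\ref{lem:even_skew_pairs} and nonzero because $\omega$ contributes a skew pair. Thus $|(C+C')\cap T|=|C\cap T|-k$ is odd and strictly smaller, contradicting minimality. Note that the paper works with cycles throughout (and extremality) rather than trying to stay inside the class of circuits; your initial reduction to circuits is sound, but it is not what enables the descent — the disjointness $C'\cap T=\emptyset$ is.

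To repair your argument you would need either to replicate the paper's construction of $C'$ (disjoint from $T$) rather than invoking Lemma~\ref{lem:two_skew_pairs}, or to run the minimality argument over cycles with $C+C'$ as the paper does; as written, the descent step is not justified.
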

\begin{Proof}
The if direction follows from Theorem~\ref{thm:eul_char} (since every circuit is a cycle).

We now prove the only-if direction. Let $T \in \ort(Z)$. Assume to the contrary that there are cycles of $Z$ for which its intersection with $T$ is of odd cardinality. Let cycle $C$ of $Z$ be such that $|C \cap T|$ is odd and $|C \cap T|$ is minimal among all cycles with this property.
Let $T' \in \mathcal{S}(\Omega)$ disjoint from $T$ such that $C \setminus T \subseteq T'$. Let $\omega \in \Omega$ be such that $\omega \cap (C \cap T) \neq \emptyset$. Since $Z$ is tight, there is an $x \in \omega$ such that $(T' \setminus \omega) \cup \{x\}$ contains a circuit $C'$ with $x \in C'$. If $x \in T$, then $|C' \cap T|=1$ which contradicts Theorem~\ref{thm:eul_char}. Thus, $x \notin T$ and so $x \notin C$. By Lemma~\ref{lem:even_skew_pairs}, $C \cup C'$ contains $k$ skew pairs where $k$ is even. Moreover, $k$ is nonzero because $\omega$ has one such skew pair, namely $\{x\} \cup (\omega \cap T)$, as a subset. For the cycle $C+C'$ we have that $|(C+C') \cap T| = |C \cap T|-k$ is odd, which contradicts the minimality property of $C$.
\end{Proof}

We are interested in $3$-matroids $Z$ such that $\ortE_T \neq \emptyset$ for all $T \in \mathcal{T}(\Omega)$, or equivalently (by Lemma~\ref{lem:eul_null}) such that $\ortE_B \neq \emptyset$ for all $B \in \mathcal{B}(Z)$. The next lemma shows that such a $3$-matroid $Z$ is tight.
% Is generalizable to nondegenerate multimatroids instead of 3-matroids by not relying on Lemma~\ref{lem:eul_null}.
\begin{lemma} \label{lem:implies_tight}
Let $Z$ be a 3-matroid with $\ortE_B \neq \emptyset$ for all $B \in \mathcal{B}(Z)$. Then $Z$ is tight.
\end{lemma}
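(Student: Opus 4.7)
The plan is to argue by contradiction. Assume $Z$ is not tight. By Condition~\ref{item:char_tight_nullity} of Proposition~\ref{prop:char_tight}, there is a subtransversal $S$ with $|S|=|\Omega|-1$ whose unique missing skew class $\omega$ satisfies $n_Z(S\cup\{x\})=n_Z(S)$ for every $x\in\omega$; call such an $S$ \emph{bad}. The strategy is first to upgrade $S$ to an \emph{independent} bad subtransversal $S^*$, extend $S^*$ to a basis $B^*$, and then contradict the hypothesis $\ortE_{B^*}\neq\emptyset$.

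For the upgrade step, I would show that if a bad $S$ has nullity $\rho>0$ then a local swap yields a bad $S'$ of nullity $\rho-1$. Pick a circuit $C\subseteq S$, an element $y\in C$ in some skew class $\omega_y$, and set $S_0=S\setminus\{y\}$. Since $y$ lies in the closure of $S_0$, we have $n_Z(S_0)=\rho-1$. Supermodularity of the nullity applied to $S_0\cup\{y\}$ and $S_0\cup\{u\}$ (for $u\in\omega$), combined with the bad property, forces $n_Z(S_0\cup\{u\})=\rho-1$ for every $u\in\omega$. Now for each $u\in\omega$, Condition~\ref{item:char_mm_nullity} of Theorem~\ref{thm:char_mm} applied to the subtransversal $S_0\cup\{u\}$ and the skew class $\omega_y$ says at most one element of $\omega_y$ changes the nullity; since $y$ is such an element (because $n_Z(S_0\cup\{u,y\})=n_Z(S\cup\{u\})=\rho>\rho-1$), every $y'\in\omega_y\setminus\{y\}$ satisfies $n_Z(S_0\cup\{u,y'\})=\rho-1$. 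Hence $S':=S_0\cup\{y'\}$ is a bad subtransversal of nullity $\rho-1$. Iterating delivers a bad independent subtransversal $S^*$.

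For the contradiction, fix any $x_0\in\omega$ and set $B^*=S^*\cup\{x_0\}$. By the bad property of $S^*$, $n_Z(B^*)=n_Z(S^*)=0$, so $B^*$ is an independent transversal, and hence a basis of $Z$ since $Z$ is nondegenerate. By hypothesis there exists $T\in\ortE_{B^*}$, so $T$ is a transversal of $Z$ with $T\cap B^*=\emptyset$ and $Z-T$ tight. In particular $T\cap S^*=\emptyset$, and $S^*$ is a subtransversal of $Z-T$ of size $|\Omega|-1$ whose unique missing skew class in $Z-T$ is $\omega\setminus T$ (nonempty since $|\omega|=3$). Applying Condition~\ref{item:char_tight_nullity} of Proposition~\ref{prop:char_tight} to the tight multimatroid $Z-T$ yields some $x'\in\omega\setminus T$ with $n_Z(S^*\cup\{x'\})=n_Z(S^*)+1$, contradicting the bad property of $S^*$.

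The main obstacle is the upgrade step: the swap $y\mapsto y'$ must preserve badness simultaneously for every $u\in\omega$, and this is exactly what the interplay between supermodularity of the nullity and the at-most-one-changer principle (Condition~\ref{item:char_mm_nullity} of Theorem~\ref{thm:char_mm}) delivers. Once an independent bad subtransversal is available, extending it to a basis and pitting it against the tightness of $Z-T$ is routine.
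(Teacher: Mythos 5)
Your proof is correct, but it takes a genuinely different route from the paper's. The paper argues \emph{directly}: it first promotes the hypothesis $\ortE_B\neq\emptyset$ for all bases $B$ to the stronger fact $\ortE_T\neq\emptyset$ for all transversals $T$, by invoking Lemma~\ref{lem:eul_null} (which decomposes $\ortE_T = \ortE_{T\sdif p_1}\cup\ortE_{T\sdif p_2}$ when both $p_i$ drop the nullity, combined with a descent on $n_Z(T)$); then, for an arbitrary $(|\Omega|-1)$-element subtransversal $S$, it extends $S$ to a transversal $S\cup\{x\}$, picks $T\in\ortE_{S\cup\{x\}}$, and reads off the needed circuit of $Z-T$ from tightness. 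You instead argue by \emph{contradiction}: a failure of tightness produces a ``bad'' subtransversal, which you upgrade to an \emph{independent} bad subtransversal $S^*$ via a circuit-element swap justified by supermodularity of nullity together with the at-most-one-changer property (Condition~\ref{item:char_mm_nullity} of Theorem~\ref{thm:char_mm}); extending $S^*$ to a basis $B^*$ and applying tightness of $Z-T$ for $T\in\ortE_{B^*}$ gives the contradiction. Both proofs hinge on the same final step (apply Condition~\ref{item:char_tight_nullity} of Proposition~\ref{prop:char_tight} to the tight $Z-T$ at a subtransversal of size $|\Omega|-1$), but the descent machinery is placed on opposite sides: the paper descends on the orienting-transversal side via Lemma~\ref{lem:eul_null}, while you descend on the bad-subtransversal side with a self-contained swap argument. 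Your route avoids depending on Lemma~\ref{lem:eul_null} as a black box, at the cost of re-deriving a closely related descent step; the paper's route reuses an already-established lemma and is correspondingly shorter on the page. Each step of your upgrade (supermodularity giving $n_Z(S_0\cup\{u\})=\rho-1$, the at-most-one-changer identifying $y$ as the unique changer over $\omega_y$, monotonicity pinning $n_Z(S')=\rho-1$) checks out, as does the observation that an independent transversal of a nondegenerate multimatroid is a basis.
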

\begin{Proof}
Let $S \in \mathcal{S}(\Omega)$ with $|S| = |\Omega|-1$ and $\omega \in \Omega$ such that $S \cap \omega = \emptyset$. It suffices to show that there is a circuit $C \subseteq S \cup \omega$ with $C \cap \omega \neq \emptyset$. Let $x \in \omega$. Then $\ortE_{S \cup \{x\}} \neq \emptyset$ (recall that, by Lemma~\ref{lem:eul_null}, $\ortE_B \neq \emptyset$ for all $B \in \mathcal{B}(Z)$ implies that $\ortE_T \neq \emptyset$ for all $T \in \mathcal{T}(\Omega)$). Let $T \in \ortE_{S \cup \{x\}}$. Then $Z-T$ is tight. Since $S$ is a subtransversal of $Z-T$, there is a $y \in \omega \setminus T$ such that there is a circuit $C \subseteq S \cup \{y\}$ of $Z-T$ with $y \in C$. Since $C$ is also a circuit of $Z$, the result follows.
\end{Proof}

In the case where $Z$ is a binary tight $3$-matroid, the inequality of Lemma~\ref{lem:max_one_ort} becomes an equality. We remark that this result can also be easily shown using isotropic matroids (see Subsection~\ref{ssec:conseq_int_pol} for a definition of isotropic matroids). However, here we prove it using combinatorial arguments.

%Alternative proof: there is a representation $(I \ A \ A+I)$ of a sheltering of $Z$ with $B = T_1$. We have that $T_3 \in \ortE_B$.
\begin{theorem} \label{thm:bin_one_ort}
Let $Z$ be a binary $3$-matroid. Then $Z$ is tight if and only if for each basis $B$ of $Z$ there is a $T \in \ort(Z)$ disjoint from $B$.
\end{theorem}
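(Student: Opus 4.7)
The plan is to dispatch the ``if'' direction immediately from Lemma~\ref{lem:implies_tight}, which already shows that $\ortE_B \neq \emptyset$ for every basis $B$ forces $Z$ to be tight. So all the work lies in the converse, and I would prove by induction on $|\Omega|$ the statement: every basis $B$ of a binary tight $3$-matroid $Z$ admits some $T \in \ort(Z)$ disjoint from $B$. The case $|\Omega|=0$ is trivial with $B=T=\emptyset$.

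For the inductive step, I would fix $\omega \in \Omega$, put $\{b\} = B \cap \omega$ and $\omega \setminus \{b\} = \{y,z\}$, and pass to the minor $Z|b$. This minor is again a binary tight $3$-matroid of order $|\Omega|-1$: binariness passes through the sheltering matroid $M \slash b \delm \{y,z\}$, tightness is preserved under minors by Proposition~\ref{prop:char_tight}, and each surviving skew class still has three elements. Since $\{b\} \subseteq B$ is independent, $b$ is not singular, so $n_{Z|b}(B\setminus\{b\}) = n_Z(B) - n_Z(\{b\}) = 0$; hence $B \setminus \{b\}$ is an independent transversal of $Z|b$, and a basis by nondegeneracy. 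The inductive hypothesis then supplies some $T' \in \ort(Z|b)$ disjoint from $B \setminus \{b\}$.

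The crux is to show that one of $T' \cup \{y\}$ or $T' \cup \{z\}$ belongs to $\ort(Z)$. By Theorem~\ref{thm:char_eul_bin}, this reduces to ensuring $|C \cap T|$ is even for every cycle $C$ of $Z$. I would split the cycles of $Z$ according to $C \cap \omega \in \{\emptyset,\{b\},\{y\},\{z\}\}$. The first two classes correspond bijectively (via $C$, respectively $C \setminus \{b\}$) to the cycles of $Z|b$, so $|C \cap T'|$ is even for them by the orienting property of $T'$, and appending $y$ or $z$ does not change the intersection since such $C$ avoids $\{y,z\}$. The task therefore reduces to showing that within each of the remaining two classes $|C \cap T'|$ has a single parity, and that the two constants are opposite; the correct choice of $y$ versus $z$ is then dictated.

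The main obstacle, and the only place where binariness enters, is establishing this constancy-and-opposition of parities. Given two cycles $C_1,C_2$ of $Z$, Lemma~\ref{lem:sum_cycles} says $C_1+C_2$ (as in Definition~\ref{def:sum_subtransv}) is again a cycle of $Z$; taking characteristic vectors over $GF(2)$ and pairing with $\chi_{T'}$ yields
\[
|(C_1+C_2) \cap T'| \equiv |C_1 \cap T'| + |C_2 \cap T'| + |\mathrm{Sc}(C_1 \sdif C_2) \setminus \{\omega\}| \pmod{2},
\]
since $T'$ meets each skew class other than $\omega$ exactly once. Because $C_1,C_2$ are subtransversals, $\mathrm{Sc}(C_1 \sdif C_2) = \mathrm{Sc}(C_1 \cup C_2)$, which has even size by Lemma~\ref{lem:even_skew_pairs}. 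If $C_1$ and $C_2$ share their $\omega$-intersection in $\{\{y\},\{z\}\}$, then $\omega \notin \mathrm{Sc}(C_1 \cup C_2)$, $(C_1+C_2) \cap \omega = \emptyset$, and $C_1+C_2$ is a cycle of $Z|b$; the correction term is even, forcing $|C_1 \cap T'|$ and $|C_2 \cap T'|$ to have the same parity. If instead their $\omega$-intersections are the distinct singletons $\{y\}$ and $\{z\}$, then $\omega \in \mathrm{Sc}(C_1 \cup C_2)$ and $(C_1+C_2) \cap \omega = \{b\}$, so $C_1+C_2$ still corresponds to a cycle of $Z|b$ but the correction term is now odd, flipping the parity relation and yielding opposite parities. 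This supplies exactly the required dichotomy and closes the induction.
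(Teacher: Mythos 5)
Your proof is correct and takes essentially the same approach as the paper's: induction by passing to the minor $Z|b$, extending an orienting transversal $T'$ of $Z|b$ by one of $y,z$, and deciding the choice via a parity computation built on Lemma~\ref{lem:sum_cycles} and Lemma~\ref{lem:even_skew_pairs}. The only stylistic difference is that you classify all cycles of $Z$ by their $\omega$-intersection and argue constructively that one extension must work, whereas the paper fixes two offending circuits from a contrary assumption and derives a contradiction from the parity of $|(C+C')\cap T'|$.
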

\begin{Proof}
The if direction follows from Lemma~\ref{lem:implies_tight}.

Conversely, let $Z$ be tight. We prove by induction that $\ortE_T \neq \emptyset$ for all transversals $T$ of $Z$ (and so in particular for all bases $B$ of $Z$). The result trivially holds when the ground set of $Z$ is empty. Assume by the induction hypothesis that $\ortE_{X,Z|x} \neq \emptyset$ for all $x \in U$ and transversals $X$ of $Z|x$. Let $T \in \mathcal{T}(\Omega)$, $u \in T$, and $T' \in \ortE_{T \setminus \omega,Z|u}$, where $\omega \in \Omega$ is such that $u \in \omega$. Let $\omega = \{u,v,w\}$. We show that $T' \cup \{v\}$ or $T' \cup \{w\}$ is in $\ortE_{T,Z}$. Assume to the contrary that both $T' \cup \{v\} \notin \ort(Z)$ and $T' \cup \{w\} \notin \ort(Z)$.

By Theorem~\ref{thm:eul_char}, there are circuits $C$ and $C'$ of $Z$ such that $|C \cap (T' \cup \{v\})| = |C' \cap (T' \cup \{w\})| = 1$. If $v \notin C$, then $C$ is a circuit of $Z|u$ and $|C \cap T'| = |C \cap (T' \cup \{v\})| = 1$, which contradicts that $T' \in \ort(Z|u)$. Thus $v \in C$. Similarly, we obtain $w \in C'$. By Lemma~\ref{lem:even_skew_pairs}, $C \cup C'$ contains an even number, say $k$ number, of skew pairs. One of those pairs is $\{v,w\}$. Since $C \cap T' = C' \cap T' = \emptyset$, we have that $|(C+C') \cap T| = k$. By Lemma~\ref{lem:sum_cycles}, $C + C' \in \mathcal{CS}(Z)$. Since $u \in C + C'$, $C'' := (C + C') \setminus \{u\}$ is a cycle of $Z|u$ (recall that if $M$ is a matroid and $C$ a cycle of $M$ with $u \in C$, then $C \setminus \{u\}$ is a cycle of $M \slash u$). By Theorem~\ref{thm:char_eul_bin}, $|C'' \cap T'|$ is even. This is a contradiction because $|C'' \cap T'| = |(C+C') \cap T| - 1 = k - 1$, where $k$ is even.
\end{Proof}

We remark that, interestingly, the property $\ortE_B \neq \emptyset$ for all $B \in \mathcal{B}(Z)$ of Theorem~\ref{thm:bin_one_ort} is the only reason why we assume in Section~\ref{sec:pol_evals} that the $3$-matroid $Z$ is binary and tight (although we mention that both Statement~\ref{item:eval_min4} and the second equality of Statement~\ref{item:eval_2_rem_p3} of Theorem~\ref{thm:evals} use Proposition~\ref{prop:mmpol_res_1-k} which also requires that $Z$ is tight).

The following result is a consequence of Lemma~\ref{lem:eul_null}, Lemma~\ref{lem:max_one_ort}, and Theorem~\ref{thm:bin_one_ort}.
The obtained result is also a consequence of \cite[Theorem~6.7]{TutteMartinOrientingVectors/Bouchet91} from the context of isotropic systems. The proof given in this paper is shorter and uses matroid-theoretic arguments.
\begin{corollary} [\cite{TutteMartinOrientingVectors/Bouchet91}] \label{cor:eul_coset}
Let $Z$ be a binary tight $3$-matroid and let $T \in \mathcal{T}(\Omega)$. Then $|\ortE_T| = 2^{n_Z(T)}$.
\end{corollary}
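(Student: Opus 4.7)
The plan is to prove the equality by induction on the nullity $n_Z(T)$, using Theorem~\ref{thm:bin_one_ort} and Lemma~\ref{lem:max_one_ort} to anchor the base case and Lemma~\ref{lem:eul_null} to carry out the inductive step.

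For the base case $n_Z(T)=0$, the transversal $T$ is a basis of $Z$. Lemma~\ref{lem:max_one_ort} gives $|\ortE_T|\le 1$, while Theorem~\ref{thm:bin_one_ort} (applied in the direction that uses the tightness and binariness of $Z$) provides a transversal in $\ort(Z)$ disjoint from $T$, so $|\ortE_T|\ge 1$. Hence $|\ortE_T|=1=2^{0}$.

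For the inductive step assume $n_Z(T)\ge 1$ and that the statement holds for all transversals of smaller nullity. The key is to select $x\in T$ whose two skew pairs $p_1,p_2$ satisfy the hypothesis of Lemma~\ref{lem:eul_null}, namely $n_Z(T\sdif p_1)=n_Z(T\sdif p_2)=n_Z(T)-1$. Since $T$ is dependent, there is a circuit $C\subseteq T$; I pick any $x\in C$ and write the skew class of $x$ as $\omega=\{x,y,z\}$, with $p_1=\{x,y\}$, $p_2=\{x,z\}$, and $S=T\setminus\{x\}$. Because $C\subseteq S\cup\{x\}$ is a circuit containing $x$, the element $x$ is not a coloop of the matroid $Z[S\cup\{x\}]$, so $n_Z(S\cup\{x\})=n_Z(S)+1=n_Z(T)$. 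Since $Z$ is tight, Condition~\ref{item:char_tight_nullity} of Proposition~\ref{prop:char_tight} says $x$ is the \emph{unique} element of $\omega$ that raises nullity, so $n_Z(S\cup\{y\})=n_Z(S\cup\{z\})=n_Z(S)=n_Z(T)-1$, i.e.\ $n_Z(T\sdif p_1)=n_Z(T\sdif p_2)=n_Z(T)-1$.

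Lemma~\ref{lem:eul_null} then yields the disjoint-union decomposition $\ortE_T=\ortE_{T\sdif p_1}\cup\ortE_{T\sdif p_2}$ with empty intersection. Both $T\sdif p_1$ and $T\sdif p_2$ are transversals of strictly smaller nullity, so the induction hypothesis gives $|\ortE_{T\sdif p_1}|=|\ortE_{T\sdif p_2}|=2^{n_Z(T)-1}$, and adding the two sizes produces $|\ortE_T|=2^{n_Z(T)}$, completing the induction. The only delicate point is the inductive step, and it reduces to the standard matroid observation that a circuit-element cannot be a coloop, which combined with tightness pins down $x$ as the unique nullity-raising element in its skew class so that Lemma~\ref{lem:eul_null} applies.
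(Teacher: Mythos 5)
Your proof is correct and follows essentially the same induction on $n_Z(T)$ as the paper, with the same base case (via Lemma~\ref{lem:max_one_ort} and Theorem~\ref{thm:bin_one_ort}) and the same inductive step via Lemma~\ref{lem:eul_null} applied to the two skew pairs through an element of a circuit of $Z[T]$. The only minor difference is that you explicitly justify the nullity equalities $n_Z(T\sdif p_1)=n_Z(T\sdif p_2)=n_Z(T)-1$ using tightness and the circuit-element/coloop observation, whereas the paper asserts them without comment.
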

\begin{Proof}
The proof is by induction on $n_Z(T)$. If $n_Z(T) = 0$, then the result follows by Lemmas~\ref{lem:max_one_ort} and \ref{thm:bin_one_ort}.

Assume now that $n_Z(T)>0$. Let $C$ be a circuit of $Z[T]$ and let $x \in C$. Let $p_1$ and $p_2$ be the skew pairs that contain $x$. Then $n_Z(T \sdif p_1) = n_Z(T \sdif p_2) = n_Z(T)-1$. By Lemma~\ref{lem:eul_null}, $\ortE_{T \sdif p_1} \cap \ortE_{T \sdif p_2} = \emptyset$ and $\ortE_T = \ortE_{T \sdif p_1} \cup \ortE_{T \sdif p_2}$. By the induction hypothesis, $|\ortE_{T \sdif p_1}| = |\ortE_{T \sdif p_2}| = 2^{n_Z(T)-1}$. Hence $|\ortE_T| = 2^{n_Z(T)}$.
\end{Proof}

As a consequence of Corollary~\ref{cor:eul_coset}, $\ort(Z)$ uniquely determines the bases of a binary tight $3$-matroid $Z$, and so it uniquely determines $Z$.

For $S \in \mathcal{S}(\Omega)$ and $X \subseteq \mathcal{S}(\Omega)$, we define $S+X = \{S+S' \mid S' \in X\}$.

The next result can be deduced from Corollary~6.6 and Theorem~6.7 of \cite{TutteMartinOrientingVectors/Bouchet91} from the context of isotropic systems. More precisely, for $T \in \ort(Z)$, \cite[Corollary 6.6]{TutteMartinOrientingVectors/Bouchet91} shows that $\ort(Z) \subseteq T + \mathcal{CS}(Z-T)$ and \cite[Theorem~6.7]{TutteMartinOrientingVectors/Bouchet91} shows that $\ort(Z) \supseteq T + \mathcal{CS}(Z-T)$. We give here a different proof.
\begin{theorem} \label{thm:char_ort_set_bin}
Let $Z$ be a binary tight $3$-matroid. For every $T \in \ort(Z)$, we have $\ort(Z) = T + \mathcal{CS}(Z-T)$. In particular, $|\ort(Z)| = |\mathcal{CS}(Z-T)|$.
\end{theorem}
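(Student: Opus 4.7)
The plan is to reformulate ``orienting'' via Theorem~\ref{thm:char_eul_bin} as a parity condition on intersections with cycles, and to use Lemma~\ref{lem:sdif_skew_classes} to turn the operation $T + C$ into a tractable statement about $\mathrm{Sc}$.

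The key technical ingredient is the identity
$$
|D \cap (T+C)| \equiv |D \cap T| + |\mathrm{Sc}(D \cup C)| \pmod{2}
$$
for every $C, D \in \mathcal{S}(\Omega)$ with $C \cap T = \emptyset$. To obtain it, one first checks skew class by skew class that $T+C$ is again a transversal (since $|C \cap \omega| \in \{0,1\}$ leads to $|(T+C) \cap \omega| = 1$ in both cases); then, for any subtransversal $D$ and any transversal $T^*$, the elementary count $|D \cap T^*| = |D| - |\mathrm{Sc}(D \cup T^*)|$ holds by splitting each $\omega$ with $D \cap \omega \neq \emptyset$ according to whether $D \cap \omega = T^* \cap \omega$ or not. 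Applying this to $T^* \in \{T, T+C\}$ and invoking Lemma~\ref{lem:sdif_skew_classes} with $(S, S', S'') = (D, T, C)$ to rewrite $\mathrm{Sc}(D \cup (T+C)) = \mathrm{Sc}(D \cup T) \sdif \mathrm{Sc}(D \cup C)$, the identity falls out.

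For the inclusion $T + \mathcal{CS}(Z-T) \subseteq \ort(Z)$, take $C \in \mathcal{CS}(Z-T)$. Since every transversal of $Z-T$ is also a transversal of $Z$, we have $\mathcal{CS}(Z-T) \subseteq \mathcal{CS}(Z)$, and so Lemma~\ref{lem:even_skew_pairs} gives $|\mathrm{Sc}(D \cup C)|$ even for every $D \in \mathcal{CS}(Z)$. Combined with $|D \cap T|$ even (Theorem~\ref{thm:char_eul_bin} applied to $T \in \ort(Z)$), the identity yields $|D \cap (T+C)|$ even, whence $T+C \in \ort(Z)$ by the converse direction of Theorem~\ref{thm:char_eul_bin}.

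For the reverse inclusion, given $T'' \in \ort(Z)$ define $C$ per skew class by $C \cap \omega := \emptyset$ when $T \cap \omega = T'' \cap \omega$ and $C \cap \omega := \omega \setminus (T \cup T'')$ otherwise; a direct case check shows that $C$ is the unique subtransversal disjoint from $T$ with $T + C = T''$. Because both $T$ and $T''$ are orienting, the parity identity forces $|\mathrm{Sc}(D \cup C)|$ to be even for every $D \in \mathcal{CS}(Z)$, and Theorem~\ref{thm:max_isotropic_skew_pairs} then places $C$ in $\mathcal{CS}(Z)$; using the sheltering binary matroid from Proposition~\ref{prop:skew_classes_cycles} together with $C \subseteq U \setminus T$ upgrades this to $C \in \mathcal{CS}(Z-T)$. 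Injectivity of $C \mapsto T+C$ is immediate from the per-class description, so the two inclusions assemble into a bijection $\mathcal{CS}(Z-T) \to \ort(Z)$ and yield the cardinality statement. The main obstacle is pinning down the parity identity in the correct form---once that is in place, Lemma~\ref{lem:even_skew_pairs} powers one direction and its converse Theorem~\ref{thm:max_isotropic_skew_pairs} powers the other.
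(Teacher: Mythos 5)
Your proof is correct and follows essentially the same route as the paper's: both directions reduce to parity of intersections with cycles via Theorem~\ref{thm:char_eul_bin} and Theorem~\ref{thm:max_isotropic_skew_pairs}, with Lemma~\ref{lem:even_skew_pairs} and Lemma~\ref{lem:sdif_skew_classes} supplying the needed even-skew-pair facts. Your one stylistic refinement is to isolate the congruence $|D \cap (T+C)| \equiv |D \cap T| + |\mathrm{Sc}(D\cup C)| \pmod 2$ as an explicit counting identity and reuse it symmetrically for both inclusions, whereas the paper asserts it implicitly in the forward direction and instead expands $|(T\sdif T')\cap C|$ directly in the reverse direction; the content is the same.
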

\begin{Proof}
Let $T \in \ort(Z)$ and $C \in \mathcal{CS}(Z-T)$. Let $C' \in \mathcal{CS}(Z)$. By Lemma~\ref{lem:even_skew_pairs}, $C \cup C'$ contains an even number of skew pairs. Thus, $|C' \cap T| \equiv |C' \cap (T+C)| \mod 2$. Since $T \in \ort(Z)$, $|C' \cap T|$ is even by Theorem~\ref{thm:char_eul_bin}. Thus, $|C' \cap (T+C)|$ is even for all $C' \in \mathcal{CS}(Z)$. Hence, again by Theorem~\ref{thm:char_eul_bin}, $T+C \in \ort(Z)$. Consequently, $T + \mathcal{CS}(Z-T) \subseteq \ort(Z)$.

Conversely, let $T' \in \ort(Z)$. We need to show that $T+T' \in \mathcal{CS}(Z-T)$. Since $(T+T') \cap T = \emptyset$, it suffices that show that $T+T' \in \mathcal{CS}(Z)$. Let $C \in \mathcal{CS}(Z)$ and let $p$ be the number of skew pairs in $(T+T') \cup C$. By Theorem~\ref{thm:max_isotropic_skew_pairs}, we need to show that $p$ is even. We have $p = |(T \sdif T') \cap C| = |T \cap C| + |T' \cap C| - 2|(T \cap T') \cap C|$. By Theorem~\ref{thm:char_eul_bin}, $|T \cap C|$ and $|T' \cap C|$ are both even. Hence $p$ is even.
\end{Proof}
Note that left-hand sides of the equalities of Theorem~\ref{thm:char_ort_set_bin} are independent of $T$.

The following result from \cite{TutteMartinOrientingVectors/Bouchet91} follows now straightforwardly from Theorem~\ref{thm:char_ort_set_bin}.
\begin{corollary}[Theorem~6.7 of \cite{TutteMartinOrientingVectors/Bouchet91}] \label{cor:ort_coset}
Let $Z$ be a binary tight $3$-matroid. For all $T \in \mathcal{T}(\Omega)$ and $T' \in \ort(Z)$ with $T \cap T' = \emptyset$, $\ort(Z) \cap 2^{U\setminus T} = T' + \mathcal{CS}(Z[T])$.
\end{corollary}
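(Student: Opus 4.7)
My plan is to deduce the corollary directly from Theorem~\ref{thm:char_ort_set_bin} by intersecting the identity $\ort(Z) = T' + \mathcal{CS}(Z-T')$ with $2^{U\setminus T}$ and then identifying which cycles in $\mathcal{CS}(Z-T')$ produce transversals avoiding $T$. Since $T' \in \ort(Z)$, applying Theorem~\ref{thm:char_ort_set_bin} to $T'$ immediately gives
\[
\ort(Z) \cap 2^{U\setminus T} = \bigl\{\, T' + C \;\big|\; C \in \mathcal{CS}(Z-T'),\ (T'+C) \cap T = \emptyset \,\bigr\}.
\]
The remaining task is therefore to prove that, under the hypothesis $T \cap T' = \emptyset$, one has $(T'+C)\cap T = \emptyset$ precisely when $C \in \mathcal{CS}(Z[T])$.

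The key step is a skew-class-by-skew-class analysis of $T' + C$ using Definition~\ref{def:sum_subtransv}. Because any $C \in \mathcal{CS}(Z-T')$ is contained in some transversal of $Z-T'$, we have $|C \cap \omega| \le 1$ and $C \cap T' = \emptyset$, so $T' \sdif C = T' \cup C$ and $\mathrm{Sc}(T' \sdif C) = \{\omega \in \Omega : C \cap \omega \neq \emptyset\}$. Unpacking the definition, in each $\omega$ the transversal $T'+C$ selects the element of $T' \cap \omega$ if $C \cap \omega = \emptyset$, and the unique element of $\omega \setminus (T' \cup C)$ (the ``third element'') if $C \cap \omega \neq \emptyset$. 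Since $T \cap T' = \emptyset$, requiring the selected element to lie outside $T$ in every skew class is equivalent, for each $\omega$ with $C \cap \omega \neq \emptyset$, to $T \cap \omega = C \cap \omega$; equivalently, $C \subseteq T$.

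To conclude, I would observe that because $T$ is a transversal of $Z-T'$ (being a transversal of $\Omega$ disjoint from $T'$), we have $(Z-T')[T] = Z[T]$, and hence $\mathcal{CS}(Z[T]) = \mathcal{CS}((Z-T')[T]) = \mathcal{CS}(Z-T') \cap 2^T$. Therefore the cycles surviving the intersection are exactly those in $\mathcal{CS}(Z[T])$, giving $\ort(Z) \cap 2^{U\setminus T} = T' + \mathcal{CS}(Z[T])$ as required.

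I do not anticipate a real obstacle: Theorem~\ref{thm:char_ort_set_bin} does all the heavy lifting, and the only subtle point is the bookkeeping in the skew-class analysis, which reduces to the trivial observation that ``the third element of $\omega$ avoids $T$'' forces $T$ and $C$ to coincide on $\omega$. A minor sanity check worth flagging is that every term $T' + C$ with $C \in \mathcal{CS}(Z[T])$ is automatically in $\ort(Z)$ via the inclusion $\mathcal{CS}(Z[T]) \subseteq \mathcal{CS}(Z-T')$, so the forward inclusion $T' + \mathcal{CS}(Z[T]) \subseteq \ort(Z) \cap 2^{U\setminus T}$ needs no separate argument beyond that already covered in the analysis above.
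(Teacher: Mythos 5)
Your proof is correct and takes essentially the same route as the paper's: apply Theorem~\ref{thm:char_ort_set_bin} with base transversal $T'$, intersect with $2^{U\setminus T}$, and reduce the membership test to $C \subseteq T$. The paper states the equivalence ``$T'+C \subseteq U\setminus T$ iff $C \subseteq T$'' without elaboration and leaves the identification $\mathcal{CS}(Z-T') \cap 2^T = \mathcal{CS}(Z[T])$ implicit, whereas you supply both: the skew-class-by-skew-class unpacking of $T'+C$ and the observation that, since $T$ is a transversal of $Z-T'$ and the relevant matroids are binary, the cycles of $Z-T'$ contained in $T$ are exactly the cycles of $Z[T]$. These are exactly the right details, and your final sanity check on the forward inclusion is consistent with how the paper handles it.
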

\begin{Proof}
By Theorem~\ref{thm:char_ort_set_bin}, $\ort(Z) \cap 2^{U\setminus T} = (T' + \mathcal{CS}(Z-T')) \cap 2^{U\setminus T}$. For $C \in \mathcal{CS}(Z-T')$, we have $T'+C \subseteq U\setminus T$ if and only if $C \subseteq T$ (since $C \cap T' = \emptyset$). Thus $(T' + \mathcal{CS}(Z-T')) \cap 2^{U\setminus T} = T' + \mathcal{CS}(Z[T])$.
\end{Proof}
In other words, Corollary~\ref{cor:ort_coset} says that for all $T \in \mathcal{T}(\Omega)$, $\ort(Z) \cap 2^{U\setminus T}$ is a coset of $\mathcal{CS}(Z[T])$ under $+$.

\section{Polynomial evaluations for binary tight 3-matroids} \label{sec:pol_evals}

%\subsection{Recursive relation}
\subsection{Decomposition relation}

%The size of $\ort(Z)$ is however generally quite small compared to the number of bases, for example (use sage results here). (How does it compare to the number of circuits?)

The next result is the multivariate version of \cite[Corollary~6.8]{TutteMartinOrientingVectors/Bouchet91} from the context of isotropic systems (which in turn is a generalization of \cite[Proposition~5.1]{LasVergnas1983397} in the context of Eulerian graphs). The proof is similar to that of \cite[Corollary~6.8]{TutteMartinOrientingVectors/Bouchet91}.
\begin{theorem} \label{thm:tight_y_div2}
Let $Z$ be a binary tight $3$-matroid and let $\vec{x}: U \to R$. Then $Q(Z;\vec{x},y) = \sum_{Y \in \ort(Z)} Q(Z-Y;\vec{x},y/2)$.
\end{theorem}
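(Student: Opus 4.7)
The plan is to expand the right-hand side, swap the order of summation, and invoke Corollary~\ref{cor:eul_coset} to count orienting transversals.

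First I would write
\[
\sum_{Y \in \ort(Z)} Q(Z-Y;\vec{x},y/2) = \sum_{Y \in \ort(Z)} \sum_{T'} \prodv{\vec{x}}{T'}(y/2)^{n_{Z-Y}(T')},
\]
where $T'$ ranges over the transversals of $Z-Y$. A transversal of $Z-Y$ is exactly a transversal $T$ of $\Omega$ with $T \cap Y = \emptyset$ (since each skew class $\omega$ of $Z-Y$ is $\omega \setminus (\omega \cap Y)$ and a transversal must pick one element from it), and for such $T$ we have $(Z-Y)[T] = Z[T]$, so $n_{Z-Y}(T) = n_Z(T)$. Thus the right-hand side equals
\[
\sum_{T \in \mathcal{T}(\Omega)} \prodv{\vec{x}}{T}(y/2)^{n_Z(T)} \cdot \bigl|\{Y \in \ort(Z) : Y \cap T = \emptyset\}\bigr|
= \sum_{T \in \mathcal{T}(\Omega)} \prodv{\vec{x}}{T}(y/2)^{n_Z(T)} \cdot |\ortE_T|.
\]

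The key step is then invoking Corollary~\ref{cor:eul_coset}, which gives $|\ortE_T| = 2^{n_Z(T)}$ for every transversal $T$ since $Z$ is a binary tight $3$-matroid. Combining $(y/2)^{n_Z(T)} \cdot 2^{n_Z(T)} = y^{n_Z(T)}$ collapses the expression to $\sum_{T \in \mathcal{T}(\Omega)} \prodv{\vec{x}}{T} y^{n_Z(T)} = Q(Z;\vec{x},y)$, as desired.

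There is no real obstacle here beyond making sure the bookkeeping of sums and the identification of transversals of $Z-Y$ with transversals of $Z$ disjoint from $Y$ is carried out carefully; all of the combinatorial content is packaged in the equality $|\ortE_T| = 2^{n_Z(T)}$ already established in Corollary~\ref{cor:eul_coset}. In particular, note that $(Z-Y)[T] = Z[T]$ is where the hypothesis that $Z$ is nondegenerate (implicit in being a binary tight $3$-matroid with skew classes of size $3$) matters, so that removing $Y$ genuinely leaves an underlying semi-multimatroid whose transversals correspond to $Y$-avoiding transversals of $Z$.
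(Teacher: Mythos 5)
Your proposal is correct and is essentially identical to the paper's proof: both expand the right-hand side over transversals disjoint from $Y$, swap the order of summation to get a factor of $|\ortE_T|$, and then apply Corollary~\ref{cor:eul_coset} to replace that count with $2^{n_Z(T)}$.
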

\begin{Proof}
We have $\sum_{Y \in \ort(Z)} Q(Z-Y;\vec{x},y/2) = \sum_{Y \in \ort(Z)}\allowbreak \sum_{T \in \mathcal{T}(\Omega), T\cap Y = \emptyset}  \allowbreak \prodv{\vec{x}}{T} (y/2)^{n_{Z}(T)} = \sum_{T \in \mathcal{T}(\Omega)} \allowbreak \sum_{Y \in \ort(Z), T\cap Y = \emptyset}  \allowbreak \prodv{\vec{x}}{T} (y/2)^{n_{Z}(T)} =  \allowbreak \sum_{T \in \mathcal{T}(\Omega)} \allowbreak 2^{n_Z(T)} \prodv{\vec{x}}{T} (y/2)^{n_{Z}(T)} = Q(Z;\vec{x},y)$, where we used Corollary~\ref{cor:eul_coset} in the second-to-last equality.
\end{Proof}

Theorem~\ref{thm:tight_y_div2} can be stated equivalently as $Q(Z;\vec{x},y) = \sum_{Y \in \ort(Z)} Q(Z;\vec{x}',y/2)$, where $\vec{x}'$ is obtained from $\vec{x}$ by setting all entries indexed by $Y$ to $0$. In this way, we can apply Theorem~\ref{thm:tight_y_div2} recursively to obtain a formula for, say, $Q(Z;\vec{x},2^l)$ (for $l$ a positive integer) in terms of $Q_1(Z;\vec{x},1)$ (cf.\ the proof of Theorem~\ref{thm:evals} below). This illustrates the power of the multivariate approach.

\subsection{Evaluations}
We now very efficiently obtain a number of evaluations of the polynomial $Q$ as consequences of Theorem~\ref{thm:tight_y_div2}. Special cases of many of these evaluations appear in the literature, as explained below.
\begin{theorem} \label{thm:evals}
Let $Z$ be a binary tight $3$-matroid, let $T \in \mathcal{T}(\Omega)$, and let $\vec{x}: U \to R$. Then the following holds.
\begin{enumerate}
\item $Q(Z;\vec{x},2^l) = \sum_{Y_1 \in \ort(Z)} \cdots \sum_{Y_l \in \ort(Z)} \prod_{\omega \in \Omega} \allowbreak \sum_{v \in \omega \setminus (\cup_i Y_i)} x_v$, for any positive integer $l$, \label{item:eval_pow_2}

\item $Q(Z;\vec{x},2) = \sum_{Y \in \ort(Z)} \prod_{u \in Y} \sum_{v \in \omega_u \setminus \{u\}} x_v$, where $\omega_u \in \Omega$ is such that $u \in \omega_u$, \label{item:eval_x_2}

\item $Q_{1}(Z;2) = |\ort(Z)| \cdot 2^{|\Omega|}$, \label{item:eval_2}

\item $Q_{1}(Z-T;2) = \sum_{Y \in \ort(Z)} 2^{|Y\cap T|}$, \label{item:eval_2_rem_p1}

\item $Q_{1}(Z-T;2) = \sum_{F \subseteq T} (-1)^{|F|} \cdot |\ort(Z|F)| \cdot 2^{|T|-r_Z(F)}$, \label{item:eval_2_rem_p2}

\item $Q_{1}(Z-T;2) = k 2^{n_Z(T)} = k |Q_{1}(Z-T;-2)|$ where $k = \sum_{F \subseteq T} (-1)^{|F|} \cdot |\ort(Z|F)| \cdot 2^{r_{Z|F}(T\setminus F)}$ is odd, \label{item:eval_2_rem_p3}

\item $Q_{1}(Z;4) = \sum_{Y_1, Y_2 \in \ort(Z)} 2^{|Y_1\cap Y_2|}$, and \label{item:eval_4}

\item $Q_{1}(Z;-4) = (-1)^{|\Omega|}\sum_{Y \in \ort(Z)} (-2)^{n_Z(Y)}$ \label{item:eval_min4}.
\end{enumerate}
\end{theorem}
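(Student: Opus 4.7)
The plan is to derive the eight statements from Theorem~\ref{thm:tight_y_div2} by specialization and iteration, with help from Lemma~\ref{lem:split_tpol_transv_min_one}, Proposition~\ref{prop:mmpol_res_1-k}, Corollary~\ref{cor:eul_coset}, and Theorem~\ref{thm:char_ort_set_bin}. The main engine is the equivalent ``set entries to zero'' reformulation $Q(Z;\vec x, y) = \sum_{Y \in \ort(Z)} Q(Z; \vec x^Y, y/2)$ of Theorem~\ref{thm:tight_y_div2}, where $\vec x^Y$ agrees with $\vec x$ outside $Y$ and vanishes on $Y$; iterating this halves $y$ and progressively kills more entries.

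Statement~\ref{item:eval_pow_2} follows from $l$ iterations with $y = 2^l$: the final $Q(Z; \vec x^{Y_1,\ldots,Y_l}, 1) = \sum_{T \in \mathcal{T}(\Omega)} \prodv{\vec x^{Y_1,\ldots,Y_l}}{T}$ factors over skew classes as $\prod_{\omega} \sum_{v \in \omega \setminus \bigcup_i Y_i} x_v$. Statement~\ref{item:eval_x_2} is the case $l = 1$, rewriting the $\omega$-product as a $u$-product because $Y$ is a transversal, and Statement~\ref{item:eval_2} is the further specialization $\vec x = \vec 1$ (each factor equals $2$). For Statement~\ref{item:eval_2_rem_p1} I would apply Statement~\ref{item:eval_x_2} with $\vec x$ the indicator of $U \setminus T$, so that $Q(Z;\vec x, 2) = Q_1(Z - T; 2)$; the factor $\sum_{v \in \omega_u \setminus \{u\}} x_v$ equals $2$ when $u \in T$ (both other elements of $\omega_u$ avoid $T$) and $1$ when $u \notin T$, producing $2^{|Y \cap T|}$. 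Statement~\ref{item:eval_2_rem_p2} follows from Lemma~\ref{lem:split_tpol_transv_min_one} at $y = 2$ combined with Statement~\ref{item:eval_2} applied to each minor $Z|F$ (still a binary tight $3$-matroid, since both binarity and tightness are minor-closed), after collapsing $n_Z(F) + |\Omega| - |F| = |T| - r_Z(F)$. The first equality of Statement~\ref{item:eval_2_rem_p3} is then obtained by extracting $2^{n_Z(T)}$ using $|T| - r_Z(F) = n_Z(T) + r_{Z|F}(T \setminus F)$, and the second equality follows from Proposition~\ref{prop:mmpol_res_1-k} with $k = 3$, giving $|Q_1(Z-T;-2)| = 2^{n_Z(T)}$.

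The main obstacle is proving that $k$ is \emph{odd} in Statement~\ref{item:eval_2_rem_p3}, i.e., that $Q_1(Z-T;2) \equiv 2^{n_Z(T)} \pmod{2^{n_Z(T)+1}}$. My first approach would use Statement~\ref{item:eval_2_rem_p1}: by Corollary~\ref{cor:eul_coset} the $2^{n_Z(T)}$ orienting transversals in $\ortE_T$ each contribute $1$, accounting for exactly $2^{n_Z(T)}$; the remaining $Y \in \ort(Z) \setminus \ortE_T$ must contribute a multiple of $2^{n_Z(T)+1}$. To establish this I would translate to cycles via Theorem~\ref{thm:char_ort_set_bin}: fixing $T' \in \ortE_T$, the bijection $Y \mapsto C$ given by $Y = T' + C$ identifies $\ort(Z)$ with $\mathcal{CS}(Z-T')$ and satisfies $|Y \cap T| = |C \setminus T|$, so $Q_1(Z-T;2) = \sum_{C \in \mathcal{CS}(Z-T')} 2^{|C \setminus T|}$. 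The cycles with $C \subseteq T$ form the $\mathbb{F}_2$-subspace $\mathcal{CS}(Z[T])$ of $\mathcal{CS}(Z)$ of size $2^{n_Z(T)}$ (using that $\mathcal{CS}(Z)$ is closed under $+$ by Lemma~\ref{lem:sum_cycles} and that $C + C = \emptyset$); the remaining task is to establish the required $2^{n_Z(T)+1}$-divisibility, which seems to demand a careful parity analysis of $|C \setminus T|$ using Definition~\ref{def:sum_subtransv} and Theorem~\ref{thm:char_eul_bin}. Alternatively, one could attempt a direct induction on $n_Z(T)$ via Lemma~\ref{lem:eul_null}, reducing $T$ to $T \sdif p_i$ for a skew pair through an element of a circuit in $Z[T]$; the base case $n_Z(T) = 0$ is immediate because then $|\ortE_T| = 1$ and all other $Y$ contribute even amounts.

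Finally, Statement~\ref{item:eval_4} is Statement~\ref{item:eval_pow_2} with $l = 2, \vec x = \vec 1$: each skew class contributes $|\omega \setminus (Y_1 \cup Y_2)| = 2$ when $Y_1$ and $Y_2$ agree on $\omega$ (equivalently, intersect inside $\omega$) and $1$ otherwise, and the number of agreements equals $|Y_1 \cap Y_2|$, producing $2^{|Y_1 \cap Y_2|}$. Statement~\ref{item:eval_min4} follows by applying Theorem~\ref{thm:tight_y_div2} at $y = -4$ with $\vec x = \vec 1$ to obtain $Q_1(Z; -4) = \sum_{Y \in \ort(Z)} Q_1(Z - Y; -2)$ and then substituting $Q_1(Z - Y; -2) = (-1)^{|\Omega|}(-2)^{n_Z(Y)}$ via Proposition~\ref{prop:mmpol_res_1-k}.
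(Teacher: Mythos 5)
Your derivations of Statements~\ref{item:eval_pow_2}--\ref{item:eval_2_rem_p2}, the first and last equalities of Statement~\ref{item:eval_2_rem_p3}, and Statements~\ref{item:eval_4} and \ref{item:eval_min4} all coincide with the paper's proof: the iterated application of Theorem~\ref{thm:tight_y_div2} starting from $Q(Z;\vec x,1) = \prod_\omega \sum_{v\in\omega} x_v$, the indicator specializations, the use of Lemma~\ref{lem:split_tpol_transv_min_one} combined with Statement~\ref{item:eval_2} applied to $Z|F$, the rank identity $|T|-r_Z(F) = n_Z(T) + r_{Z|F}(T\setminus F)$, and the invocation of Proposition~\ref{prop:mmpol_res_1-k} for both $Q_1(Z-T;-2)$ and $Q_1(Z-Y;-2)$ are exactly what the paper does.

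However, there is a genuine gap in your treatment of the \emph{oddness} of $k$ in Statement~\ref{item:eval_2_rem_p3}, and you acknowledge it: both of the routes you sketch are left unfinished. The paper's argument is short and quite different from either of your sketches. Starting from the expression $k=\sum_{F\subseteq T} s_F$ with $s_F = (-1)^{|F|}\,|\ort(Z|F)|\,2^{r_{Z|F}(T\setminus F)}$ (which you already derived), the paper observes: if $s_F$ is odd then $r_{Z|F}(T\setminus F)=0$, which forces every element of $T\setminus F$ to be singular in $Z|F$; by Lemma~\ref{lem:singular_ort}, each of the $|T\setminus F|$ singular skew classes contributes a factor of $2$ to $|\ort(Z|F)|$, so $|\ort(Z|F)| = 2^{|T\setminus F|}$, and for $s_F$ to be odd we need $F=T$. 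Since $s_T = \pm 1$ is odd, exactly one term in $\sum_F s_F$ is odd and hence $k$ is odd. You do not cite Lemma~\ref{lem:singular_ort}, and your first alternative (reducing to $\sum_{C\in\mathcal{CS}(Z-T')} 2^{|C\setminus T|}$ and hoping to show $2^{n_Z(T)+1}$-divisibility of the tail) runs into the obstruction that $\mathcal{CS}(Z-T')$ is not closed under $+$ (if $C, C'$ both avoid $T'$, the third element of a skew class entering $\mathrm{Sc}(C\sdif C')$ lies in $T'$), so the linear-algebra grouping you would need is not available. Your second alternative (induction on $n_Z(T)$ via Lemma~\ref{lem:eul_null}) handles the base case but does not explain how $Q_1(Z-T;2)$ relates to $Q_1(Z-(T\sdif p_i);2)$, and such a relation is not obvious. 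In short: the step proving $k$ is odd needs the argument via Lemma~\ref{lem:singular_ort}, and neither of your proposed substitutes demonstrably closes the gap.
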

\begin{Proof}
Statement~\ref{item:eval_pow_2}. Note that $Q(Z;\vec{x},1) = \prod_{\omega \in \Omega} \sum_{v \in \omega} x_v$. By iteratively applying Theorem~\ref{thm:tight_y_div2} we obtain Statement~\ref{item:eval_pow_2}.

Statement~\ref{item:eval_x_2} is just a slight reformulation of Statement~\ref{item:eval_pow_2} for the case $l=1$.

Statement~\ref{item:eval_2} follows directly from Statement~\ref{item:eval_x_2}.

Statement~\ref{item:eval_2_rem_p1}. Consider $Q(Z;x',y) = Q_1(Z-T;y)$, where $x'(u) = 0$ if $u \in T$ and $x'(u) = 1$ otherwise. Then the result follows from Statement~\ref{item:eval_x_2}.

Statement~\ref{item:eval_2_rem_p2}. By Lemma~\ref{lem:split_tpol_transv_min_one}, we have $Q_1(Z-T;2) = \allowbreak \sum_{F \subseteq T} \allowbreak (-1)^{|F|}2^{n_Z(F)}Q_1(Z|F;2)$. By Statement~\ref{item:eval_2_rem_p1}, $Q_1(Z|F;2) = |\ort(Z|F)| \cdot 2^{|T|-|F|}$. Hence, $Q_1(Z-T;2) = \sum_{F \subseteq T} (-1)^{|F|} \cdot |\ort(Z|F)| \cdot 2^{|T|-(|F| - n_Z(F))}$.

Statement~\ref{item:eval_2_rem_p3}. The second equality follows from Proposition~\ref{prop:mmpol_res_1-k}. We now prove the first equality. By Statement~\ref{item:eval_2_rem_p2}, $k = \sum_{F \subseteq T} s_F$ with $s_F = (-1)^{|F|} \cdot |\ort(Z|F)| \cdot 2^{|T|-r_Z(F)-n_Z(T)}$. We have $|T|-r_Z(F)-n_Z(T) = r_Z(T) - r_Z(F) = r_{Z|F}(T\setminus F)$. Let $F \subseteq T$. Assume $s_F$ is odd. Then $r_{Z|F}(T\setminus F) = 0$. Hence the elements of $T\setminus F$ are all singular in $Z|F$.
%Thus, $\ort(Z|F) = \{Y \subseteq \mathcal{T}(\Omega') \mid Y \cap (T\setminus F) = \emptyset\}$ with $\Omega'$ the set of skew classes of $Z|F$.
By Lemma~\ref{lem:singular_ort}, $|\ort(Z|F)| = 2^{|T\setminus F|}$. Since $s_F$ is odd, $|\ort(Z|F)|$ is odd and therefore $F = T$. Indeed, $s_T = (-1)^{n_Z(T)}$ is odd and we have $s_F$ is odd if and only if $F = T$. Therefore $k$ is odd.

Statement~\ref{item:eval_4} follows from Statement~\ref{item:eval_pow_2} where $l=2$.

Statement~\ref{item:eval_min4}. By Theorem~\ref{thm:tight_y_div2} and Proposition~\ref{prop:mmpol_res_1-k}, $Q_{1}(Z;-4) = \sum_{Y \in \ort(Z)} Q_1(Z-Y;-2) = \sum_{Y \in \ort(Z)} \allowbreak (-1)^{|\Omega|}(-2)^{n_Z(Y)}$.
\end{Proof}

Statement~\ref{item:eval_x_2} of Theorem~\ref{thm:evals} is a generalization of \cite[Proposition~15]{Jaeger1990Transition} from the context of $4$-regular graphs. Statement~\ref{item:eval_2} corresponds to \cite[Corollary~6.9]{TutteMartinOrientingVectors/Bouchet91} concerning the evaluation of $M(S;4)$, where $M(S;y)$ is the global Tutte-Martin polynomial of isotropic system $S$. Statement~\ref{item:eval_2_rem_p1} is a generalization of \cite[Proposition~19]{Jaeger1990Transition}.

Statement~\ref{item:eval_2_rem_p3} corresponds to \cite[Corollary~4.2]{TutteMartinOrientingVectors/Bouchet91} concerning the evaluation of $m(S,T;3)$, where $m(S,T;y)$ is the restricted Tutte-Martin polynomial of isotropic system $S$. The important difference is that the proof in this paper is shortened by using Theorem~\ref{thm:tight_y_div2}, while in \cite{TutteMartinOrientingVectors/Bouchet91} this result is proved \emph{separately} from the isotropic systems counterpart of Theorem~\ref{thm:tight_y_div2} (which is \cite[Corollary~6.8]{TutteMartinOrientingVectors/Bouchet91}) and stretches over 4 pages with separate analysis of multiple special cases. Statement~\ref{item:eval_min4} is a generalization of a result implicit in the proof of \cite[Proposition~5.4]{LasVergnas1983397} and from the context of 4-regular graphs.

\subsection{Consequences for the Tutte polynomial}

Statement~\ref{item:eval_2_rem_p3} in Theorem~\ref{thm:evals} leads to the following result from \cite{TutteMartinOrientingVectors/Bouchet91}.
\begin{corollary}[\cite{TutteMartinOrientingVectors/Bouchet91}]
Let $M$ be a binary matroid. Then $T(M;3,3) = k 2^{d}$, where $d$ is the bicycle dimension of $M$ and $k$ is an odd integer.
\end{corollary}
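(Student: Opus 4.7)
The plan is to combine Proposition~\ref{prop:tutte_as_Q} with Statement~\ref{item:eval_2_rem_p3} of Theorem~\ref{thm:evals}, applied to a binary tight $3$-matroid that extends $\mathcal{Z}_M$. First I would write, by Proposition~\ref{prop:tutte_as_Q},
$$
T(M;3,3) = Q_1(\mathcal{Z}_M;2).
$$
Since $M$ is binary, it is in particular quaternary, so Proposition~\ref{prop:quat_mat} yields a tight $3$-matroid $\mathcal{Z}_{M,3}$ with $\mathcal{Z}_{M,3} - T_3 = \mathcal{Z}_M$. Moreover, because $M$ is binary, $\mathcal{Z}_{M,3}$ is itself binary (this is the standard fact that binary matroids give rise to isotropic systems, i.e.\ binary tight $3$-matroids, via their bicycle construction), so all hypotheses of Theorem~\ref{thm:evals} are satisfied.

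Next I would apply Statement~\ref{item:eval_2_rem_p3} of Theorem~\ref{thm:evals} with $Z = \mathcal{Z}_{M,3}$ and $T = T_3$, obtaining
$$
Q_1(\mathcal{Z}_M;2) = Q_1(\mathcal{Z}_{M,3}-T_3;2) = k \cdot 2^{n_{\mathcal{Z}_{M,3}}(T_3)}
$$
for some odd integer $k$. To finish, it remains to identify the exponent with the bicycle dimension $d$ of $M$. By the definition following Proposition~\ref{prop:quat_mat}, the bicycle matroid is $\varphi_3^{-1}(\mathcal{Z}_{M,3}[T_3])$ and its nullity is precisely $d$; since renaming via $\varphi_3$ preserves nullity, $n_{\mathcal{Z}_{M,3}}(T_3) = n(\mathcal{Z}_{M,3}[T_3]) = d$. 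Concatenating the three displayed equalities yields $T(M;3,3) = k \cdot 2^d$ with $k$ odd, as desired.

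The proof is therefore almost a direct specialization; the one point that deserves care, and which I would flag as the only genuine obstacle, is the appeal to the binary representability of $\mathcal{Z}_{M,3}$ (not just its quaternariness guaranteed by Proposition~\ref{prop:quat_mat}), since Statement~\ref{item:eval_2_rem_p3} of Theorem~\ref{thm:evals} is only proved for binary tight $3$-matroids. Once this representability fact is invoked, no further calculation is needed.
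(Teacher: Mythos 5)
Your proof is correct and follows essentially the same route as the paper's: Proposition~\ref{prop:tutte_as_Q} converts $T(M;3,3)$ to $Q_1(\mathcal{Z}_M;2)$, Statement~\ref{item:eval_2_rem_p3} of Theorem~\ref{thm:evals} is applied to $\mathcal{Z}_{M,3}$ and $T_3$, and the exponent is identified with the bicycle dimension. The point you flag---that $\mathcal{Z}_{M,3}$ must be binary, not merely quaternary, for Theorem~\ref{thm:evals} to apply---is a genuine gap that the paper's own proof leaves implicit; it can be closed within the paper's toolkit by noting that $\mathcal{Z}_M$ is a binary tight $2$-matroid, hence strongly binary by Proposition~\ref{prop:IMI_implies_strongly_bin}, so Lemma~\ref{lem:char_strongly_bin} (applied to the tight $3$-matroid $\mathcal{Z}_{M,3}$ with $\mathcal{Z}_{M,3}-T_3=\mathcal{Z}_M$) gives that $\mathcal{Z}_{M,3}$ is binary.
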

\begin{Proof}
By Proposition~\ref{prop:tutte_as_Q}, $T(M;x,x) = Q_1(\mathcal{Z}_{M};x-1)$. We have $Q_1(\mathcal{Z}_{M};x-1) = Q_1(\mathcal{Z}_{M,3}-T_3;x-1)$ with $T_3 = \{ (e,3) \mid e \in E(M) \}$. By Statement~\ref{item:eval_2_rem_p3} in Theorem~\ref{thm:evals}, $Q_1(\mathcal{Z}_{M,3}-T_3;2) = k 2^{n_Z(T_3)}$ with $k$ odd. We recall from Section~\ref{sec:sh_mm_ortho} that $n_Z(T_3)$ is the bicycle dimension of $M$ (note that every binary matroid is quaternary).
\end{Proof}

\subsection{Consequences for the interlace polynomial} \label{ssec:conseq_int_pol}

To explain the consequences for the interlace polynomial, we turn to matrices. We consider $X \times Y$-matrices $A$ over some field $\mathbb{F}$ for finite sets $X$ and $Y$, which are matrices where the rows and columns are not ordered, but instead indexed by $X$ and $Y$, respectively (formally, $A$ is a function from $X \times Y$ to $\mathbb{F}$). We denote the rank and nullity of $A$ by $r(A)$ and $n(A)$, respectively. Also, for $X' \subseteq X$ and $Y' \subseteq Y$, we denote by $A[X',Y']$ the submatrix of $A$ by restricting to the rows of $X'$ and columns of $Y'$.
%The usual linear-algebra notions carry over to such matrices, for example $A^T$ is the $Y \times X$-matrix such that the entry indexed by $(y,x)$ in $A^T$ is equal to the entry indexed by $(x,y)$ in $A$.

For depictions of matrices that correspond to multimatroids, we use the following convention. Every depiction of a matrix that represents a matroid $M$ that shelters a multimatroid $Z = (U,\Omega,\mathcal{C})$ is such that the order of the columns respects $\Omega$. More precisely,
every matrix depiction of a $k$-matroid consists of $k$ consecutive blocks of $|\Omega|$ columns, where each block corresponds to a transversal and, for all $i \in \{1,\ldots,k\}$, the columns that are at the $i^{th}$ position of a block form a skew class $\omega \in \Omega$.

We recall the following result, which is originally formulated in \cite{bouchet1987} in terms of \dmatroids, see also \cite{BT/IsotropicMatI}.
\begin{proposition}[\cite{bouchet1987}] \label{prop:tight_iff_zero_diag}
Let $Z$ be a $2$-matroid representable over $\mathbb{F}$, i.e., $Z$ is sheltered by a matroid $M$ representable over $\mathbb{F}$. Let $M$ be represented by a matrix over $\mathbb{F}$ of the following form:
$$
\bordermatrix{
& T_1 & T_2 \cr
& I & A
},
$$
with $A$ skew-symmetric (i.e., $A^T = -A$), $\tau = (T_1,T_2)$ a transversal $2$-tuple of $Z$, and $I$ an identity matrix of suitable dimension. Then $A$ is zero-diagonal if and only if $Z$ is tight.
\end{proposition}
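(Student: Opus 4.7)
The plan is to translate tightness into a linear-algebra statement about the column span of the matrix $[I\ A]$ and then exploit the skew-symmetry of $A$. By Condition~\ref{item:char_tight_nullity} of Proposition~\ref{prop:char_tight}, $Z$ is tight if and only if for every subtransversal $S$ of $\Omega$ with $|S|=|\Omega|-1$ and missing skew class $\omega=\{e_m,f_m\}$ (where $e_m\in T_1$ and $f_m\in T_2$ correspond to the $m$-th column of $I$ and of $A$ respectively), at least one of $e_m,f_m$ strictly increases $n_Z$ upon being adjoined to $S$. Since $Z$ is sheltered by $M$, $r_Z$ agrees with $r_M$ on subtransversals, so this reformulates as: at least one of the two columns $e_m$, $A_{*,m}$ of $[I\ A]$ lies in the column span of the columns indexed by $S$.

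Next I would reduce the question to a principal submatrix of $A$. Let $I=\{i:e_i\in S\}$ and $J=\{j:f_j\in S\}$, so $I\cup J=\{1,\ldots,|\Omega|\}\setminus\{m\}$. Pivoting on the columns $\{e_i:i\in I\}$ (equivalently, projecting out the rows indexed by $I$) identifies $\mathrm{span}(S)$ with the column span of $A[K,J]$ inside $\mathbb F^{K}$, where $K=J\cup\{m\}$. Writing $\tilde A=A[J,J]$ and $w=A[J,\{m\}]$, condition (a), that $e_m\in\mathrm{span}(S)$, becomes the existence of $c$ with $\tilde A c=0$ and $w^T c=-1$, while condition (b), that $A_{*,m}\in\mathrm{span}(S)$, becomes the existence of $d$ with $\tilde A d=w$ and $w^T d=-A_{mm}$.

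For the ``only if'' direction, assume $A$ is zero-diagonal. Condition~\ref{item:char_mm_nullity} of Theorem~\ref{thm:char_mm} already guarantees that at most one of (a), (b) can hold, so I need only show that at least one holds. If (a) fails, then $w^T c=0$ for every $c\in\ker\tilde A$, so $w\in(\ker\tilde A)^\perp=\mathrm{col}(\tilde A^T)=\mathrm{col}(\tilde A)$ using $\tilde A^T=-\tilde A$. Picking $d$ with $\tilde A d=w$, one computes $w^T d=(\tilde A d)^T d=-d^T\tilde A d$. The key identity is that $d^T\tilde A d=0$ for any skew-symmetric $\tilde A$ with zero diagonal: the diagonal terms vanish by the zero-diagonal hypothesis, and the off-diagonal pairs $d_id_j(\tilde A_{ij}+\tilde A_{ji})$ vanish by skew-symmetry. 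Together with $A_{mm}=0$, this yields $w^T d=0=-A_{mm}$, establishing (b).

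For the converse I would argue the contrapositive: if $A_{mm}\neq 0$, take $S=T_1\setminus\{e_m\}$, so $J=\emptyset$ and $\mathrm{span}(S)=\{v\in\mathbb F^{|\Omega|}:v_m=0\}$. Both $e_m$ and $A_{*,m}$ have nonzero $m$-th coordinate (the latter being $A_{mm}$), hence neither lies in $\mathrm{span}(S)$; adjoining either element of $\omega_m$ therefore strictly increases the rank and leaves $n_Z$ unchanged, contradicting tightness at $S$. The main technical subtlety is the identity $d^T\tilde A d=0$ in characteristic~$2$: there ``skew-symmetric'' merely means symmetric, and the zero-diagonal condition is exactly what forces the identity through; in characteristic $\neq 2$ the same identity holds automatically, consistent with the fact that over such fields every skew-symmetric matrix has zero diagonal to begin with.
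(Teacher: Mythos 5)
The paper cites Proposition~\ref{prop:tight_iff_zero_diag} to Bouchet and does not supply a proof of its own, so there is no paper-side argument to compare against; I will assess your proof directly. Your linear-algebra argument is correct and self-contained. The translation of tightness at a subtransversal $S$ with missing skew class $\omega_m=\{e_m,f_m\}$ --- via Condition~\ref{item:char_tight_nullity} of Proposition~\ref{prop:char_tight} and the fact that $r_Z$ agrees with $r_M$ on subtransversals when $M$ shelters $Z$ --- into the question of whether at least one of the columns $e_m$, $A_{*,m}$ of $[I\ A]$ lies in the column span of $S$ is exactly right. Pivoting out the $\{e_i:i\in I\}$ columns reduces this to the pair of conditions on $\tilde A=A[J,J]$, $w=A[J,\{m\}]$, and your bookkeeping correctly uses the skew-symmetry relation $A[\{m\},J]=-w^T$. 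The step from (a) failing to $w\in(\ker\tilde A)^{\perp}=\mathrm{col}(\tilde A^T)=\mathrm{col}(\tilde A)$ is valid over any field, and the identity $d^T\tilde A d=0$ for a skew-symmetric, zero-diagonal $\tilde A$ is precisely the ingredient that makes the characteristic-$2$ case work (where skew-symmetric means symmetric, so the zero-diagonal hypothesis carries the weight). The converse direction via $S=T_1\setminus\{e_m\}$ is clean and correctly matches the degenerate $J=\emptyset$ specialization of your reduced conditions, where (b) collapses to $A_{mm}=0$. One cosmetic remark: invoking Condition~\ref{item:char_mm_nullity} of Theorem~\ref{thm:char_mm} for ``at most one of (a), (b) holds'' is harmless but not needed, since Condition~\ref{item:char_tight_nullity} of Proposition~\ref{prop:char_tight} only requires showing that at least one holds.
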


For a binary symmetric $V\times V$ matrix $A$, the matroid $M$ that is represented by the binary matrix
$$
\bordermatrix{
& \varphi_1(V) & \varphi_2(V) & \varphi_3(V) \cr
V & I & A & A+I
}
$$
is called the \emph{isotropic matroid} of $A$ \cite{LT/Isotropic_sys_char} (where $\varphi_i$ is again as in Definition~\ref{def:free_sum}). It follows from \cite[Proposition~41]{LT/Isotropic_sys_char}, that $M$ shelters a binary tight $3$-matroid $Z$ over carrier $(U,\Omega)$ with $\Omega = \{ \{ (v,i) \mid i \in \{1,2,3\} \} \mid v \in V \}$ and $U = \bigcup \Omega$. Let us denote $Z$ by $\mathcal{Z}_A$.
Conversely, every binary tight $3$-matroid is sheltered by a matroid isomorphic to an isotropic matroid, see \cite[Theorem~21]{BT/IsotropicMatI}.

\begin{proposition}[Theorem~21 in \cite{BT/IsotropicMatI}] \label{prop:bin_tight_implies_IAS}
Let $Z$ be a $3$-matroid and $T_1$ be a basis of $Z$. Then $Z$ is binary and tight if and only if $Z$ is sheltered by a matroid that is represented by a binary matrix of the form
$$
\bordermatrix{
& T_1 & T_2 & T_3 \cr
& I   & A   & A+I
},
$$
where $A$ is a zero-diagonal symmetric matrix, $I$ is an identity matrix (of suitable dimension), and $\tau = (T_1,T_2,T_3)$ is a transversal $3$-tuple of $Z$.
\end{proposition}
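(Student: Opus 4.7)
The plan is to derive both directions from the theory of isotropic matroids recalled immediately before the proposition, together with the known representation theorem for binary tight $2$-matroids (originally due to Bouchet \cite{bouchet1987}) and the uniqueness of tight extensions (Proposition~\ref{prop:unique_tight_k+1_mm}).

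For the \emph{if} direction, I would observe that $[I \mid A \mid A+I]$ is by definition a binary representation of the isotropic matroid associated with the symmetric zero-diagonal matrix $A$. As remarked in the paragraph preceding the proposition (citing \cite[Proposition~41]{LT/Isotropic_sys_char}), this matroid shelters a binary tight $3$-matroid over the given $(\ell,3)$-carrier whose skew classes are the triples of columns in corresponding positions of the three blocks; under the column labelling induced by $\tau=(T_1,T_2,T_3)$, this sheltered $3$-matroid is precisely $Z$. Hence $Z=\mathcal{Z}_A$ is binary and tight.

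For the \emph{only if} direction, I would first use that $T_1$ is a basis of $Z$, so (because $Z$ is nondegenerate and $T_1$ is disjoint from $T_3$) $T_1$ is a basis of the minor $Z-T_3$. The $2$-matroid $Z-T_3$ is binary (representability is preserved under minors) and tight (Proposition~\ref{prop:char_tight}). By Bouchet's representation theorem for binary tight $2$-matroids, its sheltering matroid admits a binary representation of the form $[I_{T_1}\mid A_{T_2}]$ with $A$ symmetric (equivalently, skew-symmetric over $GF(2)$); row-reducing ensures the $I$-block is placed at the given basis $T_1$, and Proposition~\ref{prop:tight_iff_zero_diag} then forces $A$ to be zero-diagonal. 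Now form the binary tight $3$-matroid $\mathcal{Z}_A$ from the \emph{if} direction: by construction $\mathcal{Z}_A - T_3$ is sheltered by the matroid represented by $[I_{T_1}\mid A_{T_2}]$, hence equals $Z-T_3$. Since $\mathcal{Z}_A$ and $Z$ are tight $3$-matroids agreeing after deletion of the transversal $T_3$, and each skew class has size $3\ge 3$, Proposition~\ref{prop:unique_tight_k+1_mm} yields $\mathcal{Z}_A=Z$, so $Z$ is sheltered by the matroid represented by $[I\mid A\mid A+I]$, as required.

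The main obstacle is the existence of the symmetric representation for $Z-T_3$: Proposition~\ref{prop:tight_iff_zero_diag} only characterizes tightness in terms of the diagonal under the standing assumption that a skew-symmetric representation has been chosen, so it does not by itself produce such a representation. Supplying it is the substantive content of Bouchet's representation theorem for binary \dmatroids. Once the $I$-block has been placed at the prescribed basis $T_1$ by row reduction and $A$ is shown to be symmetric zero-diagonal, the remainder of the argument is essentially bookkeeping, with the uniqueness of the tight extension pinning down the third block as $A+I$.
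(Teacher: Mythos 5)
The paper does not prove this proposition; it cites \cite[Theorem~21]{BT/IsotropicMatI} and remarks that fixing the basis $T_1$ is a slight strengthening implied by the cited proof. So your proposal is necessarily a reconstruction, and on the whole it is a correct one: the ``if'' direction is exactly the observation recalled just above the proposition (an isotropic matroid shelters a binary tight $3$-matroid, and a sheltering matroid together with the carrier determines the sheltered semi-multimatroid uniquely), and the ``only if'' direction correctly reduces to showing that $Z - T_3$ has a symmetric zero-diagonal representation at $T_1$ and then invoking Proposition~\ref{prop:unique_tight_k+1_mm} to identify $Z$ with $\mathcal{Z}_A$. The external input you flag --- that a binary tight $2$-matroid is strongly binary --- is exactly Proposition~\ref{prop:IMI_implies_strongly_bin} in the paper, so your assessment of where the substantive content lies is accurate.

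The one place where you slide over a point that deserves to be said explicitly is the sentence ``row-reducing ensures the $I$-block is placed at the given basis $T_1$.'' A priori, the strongly-binary representation $[I\mid A]$ that Proposition~\ref{prop:IMI_implies_strongly_bin} provides has its identity block at \emph{some} basis, not necessarily at $T_1$, and arbitrary row reduction to put the identity at $T_1$ is not guaranteed to keep the complementary block symmetric --- yet you need it symmetric both to invoke Proposition~\ref{prop:tight_iff_zero_diag} and to extend to the isotropic-matroid form $[I\mid A\mid A+I]$. The correct statement is that moving the representation from one basis to another along the delta-matroid pivot (equivalently, a principal pivot transform on the symmetric matrix) keeps the block symmetric over $GF(2)$; the fact that $T_1$ is a basis guarantees the corresponding principal submatrix is nonsingular, so the pivot is defined. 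Since this is exactly what the ``row reduction'' you have in mind computes, the argument does go through, but you should name the operation and its symmetry-preservation property rather than leaving it implicit, because the reader would otherwise reasonably worry that the symmetry hypothesis of Proposition~\ref{prop:tight_iff_zero_diag} has been lost.
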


We remark that the condition that $A$ is zero-diagonal in Proposition~\ref{prop:bin_tight_implies_IAS} can be omitted without weakening the result: by interchanging columns of $T_2$ and $T_3$ corresponding to skew pairs, the diagonal entries of $A$ can be arbitrarily set.

We also remark that, because we fix the basis $T_1$ beforehand in Proposition~\ref{prop:bin_tight_implies_IAS}, the formulation here is slightly stronger than what is provided by \cite[Theorem~21]{BT/IsotropicMatI}. This stronger formulation is however implied by the proof of \cite[Theorem~21]{BT/IsotropicMatI}.

Propositions~\ref{prop:tight_iff_zero_diag} and \ref{prop:bin_tight_implies_IAS} allow for an easy alternative proof of the only-if direction of Theorem~\ref{thm:bin_one_ort}.
%\begin{lemma}
%Let $Z$ be a binary $3$-matroid. Then $Z$ is tight if and only if $\ortE_B \neq \emptyset$ for all bases $B$ of $Z$.
%\end{lemma}
\begin{Proof}[Alternative proof of the only-if direction of Theorem~\ref{thm:bin_one_ort}.]
Let $Z$ be tight and let $T_1$ be a basis of $Z$. By Proposition~\ref{prop:bin_tight_implies_IAS}, there is a $T_3 \in \ortE_{T_1}$ such that $Z-T_3$ is sheltered by a matroid that is represented by a binary matrix of the form
$$
\bordermatrix{
& T_1 & T_2 \cr
& I & A
},
$$
where $A$ is zero-diagonal. By Proposition~\ref{prop:tight_iff_zero_diag}, $Z-T_3$ is tight. Hence, $T_3 \in \ort(Z)$. Since $T_3$ is disjoint from $T_1$, we are done.
\end{Proof}

We consider graphs where loops are allowed, but not multiple edges. A \emph{graph} $G$ is a tuple $(V,E)$ with $V$ a finite set of \emph{vertices} and $E \subseteq \{ \{x,y\} \mid x,y \in V \}$ a set of \emph{edges}. We denote the set of vertices and the set of edges by $V(G)$ and $E(G)$, respectively. The \emph{adjacency matrix} $A(G)$ of $G$ is the $V(G) \times V(G)$-matrix over $GF(2)$ where for all $x,y \in V$, the entry indexed by $(x,y)$ is $1$ if and only if $\{x,y\} \in E(G)$. Notice that $A(G)$ is symmetric. For $X \subseteq V(G)$, we denote by $G+X$ the graph obtained from $G$ by toggling the existence of loops for the vertices of $X$. For $X \subseteq V(G)$, the \emph{subgraph of $G$ induced by $X$}, denoted by $G[X]$, is $(X, E(G) \cap 2^X)$. The graph $G[X]$ is called an \emph{induced graph} of $G$. Note that $A(G[X]) = A(G)[X,X]$. For convenience, the isotropic matroid of $A(G)$ is simply called the \emph{isotropic matroid} of $G$. Also, we denote $\mathcal{Z}_G := \mathcal{Z}_{A(G)}$.

%By Proposition~\ref{prop:tight_iff_zero_diag}, we have that the cycle space $\mathcal{CS}(Z)$ of the binary tight $2$-matroid $Z$ sheltered by a matroid $M$ represented by a binary matrix $D$ of the form given in Proposition~\ref{prop:tight_iff_zero_diag}
%
%\begin{corollary}
%Let $Z$ be a binary tight $2$-matroid and let $A$ be the matrix $T_2 \times T_2$ as in Proposition~\ref{prop:tight_iff_zero_diag}. Then $\mathcal{CS}(Z) = \{ C \subseteq T_2 \mid \text{the sum of the columns of $A[C,C]$ is zero} \}$.
%\end{corollary}

\newcommand{\Eul}{\mathrm{Eul}}
\newcommand{\vodd}{\mathsf{odd}}
\newcommand{\veven}{\mathsf{even}}

A \emph{simple} graph is a graph without loops. A simple graph is called \emph{Eulerian} if every vertex is adjacent to an even number of other vertices. In particular, the empty graph is Eulerian. For a simple graph $G$, define $\Eul(G) = \{ X \subseteq V(G) \mid G[X] \text{ is Eulerian} \}$. Thus $|\Eul(G)|$ is number of Eulerian induced subgraphs of $G$. Also, for $X \subseteq V(G)$, let $\vodd_G(X)$ ($\veven_G(X)$, resp.)\ be the set of vertices in $V(G) \setminus X$ that are adjacent to an odd (even, resp.)\ number of vertices of $X$.

\begin{theorem} \label{thm:ort_as_eul}
Let $G$ be a simple graph. Then
$$
\mathcal{CS}(\mathcal{Z}_G - \varphi_3(V(G))) = \{ \varphi_2(X) \cup \varphi_1(\vodd_G(X)) \mid X \in \mathrm{Eul}(G) \}
$$
and
$$
\ort(\mathcal{Z}_G) = \{ \varphi_1(X) \cup \varphi_2(\vodd_G(X)) \cup \varphi_3(\veven_G(X)) \mid X \in \mathrm{Eul}(G) \}.
$$
In particular, $|\mathcal{CS}(\mathcal{Z}_G - \varphi_3(V(G)))| = |\ort(\mathcal{Z}_G)| = |\Eul(G)|$.
\end{theorem}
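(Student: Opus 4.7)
The plan is to first compute the cycle space of $\mathcal{Z}_G - \varphi_3(V(G))$ directly from its matrix representation, then use Theorem~\ref{thm:char_ort_set_bin} together with the fact that $\varphi_3(V(G)) \in \ort(\mathcal{Z}_G)$ to obtain the orienting transversals.

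By Proposition~\ref{prop:bin_tight_implies_IAS}, $\mathcal{Z}_G - \varphi_3(V(G))$ is sheltered by the binary matroid with representation $\begin{pmatrix} I & A \end{pmatrix}$, where $A = A(G)$ is the adjacency matrix (which is symmetric and zero-diagonal as $G$ is simple). So a subtransversal $\varphi_1(Y) \cup \varphi_2(X)$ (with $X, Y \subseteq V(G)$ disjoint) is a cycle of $\mathcal{Z}_G - \varphi_3(V(G))$ iff $\mathbf{1}_Y + A\mathbf{1}_X = 0$ over $GF(2)$, i.e., iff $\mathbf{1}_Y = A\mathbf{1}_X$. The $v$-th entry of $A\mathbf{1}_X$ counts, modulo $2$, the neighbours of $v$ in $X$. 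For $v \in X$, this entry must be $0$ (since $Y \cap X = \emptyset$), which means every vertex of $X$ has an even number of neighbours in $X$, i.e., $X \in \Eul(G)$; conversely, for $v \notin X$, the entry is $1$ precisely when $v \in \vodd_G(X)$, forcing $Y = \vodd_G(X)$. Conversely, for each $X \in \Eul(G)$ the set $\varphi_2(X) \cup \varphi_1(\vodd_G(X))$ is a subtransversal (the parts lie in disjoint $\varphi_i$-blocks and in disjoint vertex sets) and is a cycle by the same calculation. This yields the first equality, and also that the map $X \mapsto \varphi_2(X) \cup \varphi_1(\vodd_G(X))$ is a bijection $\Eul(G) \to \mathcal{CS}(\mathcal{Z}_G - \varphi_3(V(G)))$.

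For the second equality, I first check that $T := \varphi_3(V(G)) \in \ort(\mathcal{Z}_G)$. The matroid $\mathcal{Z}_G - T$ is represented by $\begin{pmatrix} I & A \end{pmatrix}$ with $A$ symmetric and zero-diagonal, hence by Proposition~\ref{prop:tight_iff_zero_diag} the 2-matroid $\mathcal{Z}_G - T$ is tight, giving $T \in \ort(\mathcal{Z}_G)$. Theorem~\ref{thm:char_ort_set_bin} then yields $\ort(\mathcal{Z}_G) = T + \mathcal{CS}(\mathcal{Z}_G - T)$, so I only need to compute $T + C$ for $C = \varphi_2(X) \cup \varphi_1(\vodd_G(X))$ with $X \in \Eul(G)$ using Definition~\ref{def:sum_subtransv}. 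The set $T \sdif C$ is the disjoint union $\varphi_3(V(G)) \cup \varphi_2(X) \cup \varphi_1(\vodd_G(X))$, so $\mathrm{Sc}(T \sdif C)$ is precisely the set of skew classes $\omega_v$ with $v \in X \cup \vodd_G(X)$. A skew class-by-skew class computation (using $V(G) = X \sqcup \vodd_G(X) \sqcup \veven_G(X)$) then shows that $(T+C) \cap \omega_v$ equals $\{\varphi_1(v)\}$ if $v \in X$, equals $\{\varphi_2(v)\}$ if $v \in \vodd_G(X)$, and equals $\{\varphi_3(v)\}$ if $v \in \veven_G(X)$. Hence $T + C = \varphi_1(X) \cup \varphi_2(\vodd_G(X)) \cup \varphi_3(\veven_G(X))$, which proves the second equality.

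Finally, since $X$ is recoverable as $\varphi_2^{-1}(T + C \cap \varphi_2(V(G))) \cup \varphi_1^{-1}(T + C \cap \varphi_1(V(G)))$... more cleanly: the map $X \mapsto \varphi_1(X) \cup \varphi_2(\vodd_G(X)) \cup \varphi_3(\veven_G(X))$ is injective (just read off $X = \varphi_1^{-1}(\,\cdot\, \cap \varphi_1(V(G)))$), so the cardinality claim $|\ort(\mathcal{Z}_G)| = |\mathcal{CS}(\mathcal{Z}_G - \varphi_3(V(G)))| = |\Eul(G)|$ follows. The main subtlety is the symmetric-difference computation in the last step, as one must carefully track the operation $+$ on subtransversals; everything else is a direct reading of the representing matrix combined with the general tools already established.
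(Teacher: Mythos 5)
Your proof is correct and follows essentially the same route as the paper: compute $\mathcal{CS}(\mathcal{Z}_G - \varphi_3(V(G)))$ by reading the kernel condition off the representing matrix $[\,I\;A(G)\,]$, note via Propositions~\ref{prop:tight_iff_zero_diag} and~\ref{prop:bin_tight_implies_IAS} that $\varphi_3(V(G))$ is orienting, apply Theorem~\ref{thm:char_ort_set_bin}, and finally work out the skew-class-by-skew-class effect of the $+$ operation from Definition~\ref{def:sum_subtransv}. The only cosmetic difference is that you phrase the cycle-space calculation as the vector identity $\mathbf{1}_Y = A\mathbf{1}_X$ whereas the paper writes out the restricted matrix in block form with rows partitioned into $X_1, X_2, X_3$; the substance is identical.
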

\begin{Proof}
The 2-matroid $Z = \mathcal{Z}_G - \varphi_3(V(G))$ is sheltered by a matroid represented by the binary matrix
$$
D = \quad \bordermatrix{
& \varphi_1(V(G)) & \varphi_2(V(G))\cr
V(G) & I   & A(G)
}.
$$
Let $S$ be a subtransversal of $Z$. We have $S = \varphi_1(X_1) \cup \varphi_2(X_2)$ some disjoint $X_1$ and $X_2$. The matrix obtained from $D$ by restricting to the columns of $S$ is
$$
\bordermatrix{
& \varphi_1(X_1) & \varphi_2(X_2)\cr
X_2 & 0 & A(G[X_2]) \cr
X_1 & I & A(G)[X_1,X_2] \cr
X_3 & 0 & A(G)[X_3,X_2]
},
$$
where $X_3 = V(G) \setminus (X_1 \cup X_2)$.

We have that $S \in \mathcal{CS}(Z)$ if and only if the columns of $\varphi_1(X_1) \cup \varphi_2(X_2)$ sum up to zero if and only if both the columns of $A(G[X_2])$ sum up to zero, i.e., $G[X_2]$ is Eulerian, and that $X_1 = \vodd_G(X_2)$.

By Propositions~\ref{prop:tight_iff_zero_diag} and \ref{prop:bin_tight_implies_IAS} and the fact that $G$ is simple, we observe that $\varphi_3(V(G)) \in \ort(\mathcal{Z}_G)$. By Theorem~\ref{thm:char_ort_set_bin}, $\ort(\mathcal{Z}_G) = \varphi_3(V(G)) + \mathcal{CS}(\mathcal{Z}_G - \varphi_3(V(G)))$. Note that $\varphi_3(V(G))+(\varphi_2(X) \cup \varphi_1(\vodd_G(X))) = \varphi_1(X) \cup \varphi_2(\vodd_G(X)) \cup \varphi_3(\veven_G(X))$.
\end{Proof}

\begin{lemma} \label{lem:split_trans_graph}
Let $G$ be a graph. For all $T \in \mathcal{T}(\Omega)$ we define $X_i = \varphi_i^{-1}(T \cap \varphi_i(V(G)))$ with $i \in \{1,2,3\}$, where $\Omega$ is the set of skew classes of $\mathcal{Z}_G$. We have $n_{\mathcal{Z}_G}(T) = n(A(G+X_3[X_2 \cup X_3]))$.
\end{lemma}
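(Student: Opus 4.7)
The plan is to translate the multimatroid nullity $n_{\mathcal{Z}_G}(T)$ into the nullity of a concrete binary matrix, and then reduce that matrix to the claimed adjacency matrix by pivoting on an identity block.

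Since $\mathcal{Z}_G = \mathcal{Z}_{A(G)}$ is sheltered by the isotropic matroid $M$ of $G$ (cf.\ Proposition~\ref{prop:bin_tight_implies_IAS}), we have $\mathcal{Z}_G[T] = M[T]$ for every transversal $T$, and hence $n_{\mathcal{Z}_G}(T) = n(M[T])$. The matrix $M[T]$ is obtained from the standard representation of $M$ with column blocks $I$, $A(G)$, $A(G)+I$ indexed by $\varphi_1(V(G)),\varphi_2(V(G)),\varphi_3(V(G))$ by keeping only the columns in $T$. Because $T$ is a transversal and the skew classes of $\mathcal{Z}_G$ are the fibres $\{\varphi_1(v),\varphi_2(v),\varphi_3(v)\}$ over $v\in V(G)$, the sets $X_1,X_2,X_3$ partition $V(G)$; in particular $|T|=|V(G)|$.

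Next I would pivot on the $X_1$-block of $M[T]$. This block is the $V(G)\times X_1$ submatrix of $I$, whose columns are the standard basis vectors $\{e_v : v\in X_1\}$. Using them to clear the $X_1$-rows of the $X_2$- and $X_3$-blocks does not change the rank, and discarding the $|X_1|$ pivot rows and pivot columns produces an $(X_2\cup X_3)\times(X_2\cup X_3)$ residual matrix $R'$ with $r(M[T]) = |X_1| + r(R')$. By inspection, the column of $R'$ indexed by $v\in X_2$ is $A(G)[X_2\cup X_3,\,v]$, while the column of $R'$ indexed by $v\in X_3$ is $A(G)[X_2\cup X_3,\,v] + e_v$, where $e_v$ is the standard basis vector in $GF(2)^{X_2\cup X_3}$.

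Finally I would identify $R'$ with $A((G+X_3)[X_2\cup X_3])$: toggling loops on $X_3$ replaces $A(G)$ by $A(G)+D$, where $D$ is the diagonal matrix whose diagonal is the indicator of $X_3$, and then restricting to the induced subgraph on $X_2\cup X_3$ takes the principal $(X_2\cup X_3)\times(X_2\cup X_3)$ submatrix, which is exactly the column description of $R'$ above. A rank-nullity computation then yields
$$
n_{\mathcal{Z}_G}(T) = |T| - r(M[T]) = |V(G)| - |X_1| - r(R') = n(A((G+X_3)[X_2\cup X_3])),
$$
using $|X_2|+|X_3| = |V(G)|-|X_1|$. The only real obstacle is bookkeeping in the pivot step: one must verify that after eliminating the $X_1$-rows, the $\varphi_3$-block retains its $+I$ correction exactly on the diagonal positions of $X_3$, so that this correction precisely accounts for the loop toggling $G\mapsto G+X_3$ once one restricts to $X_2\cup X_3$.
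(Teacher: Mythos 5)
Your proof is correct and follows essentially the same route as the paper: both reduce $n_{\mathcal{Z}_G}(T)$ to the nullity of the column-restriction $M[T]$ of the isotropic matroid's representing matrix, exploit the identity block on $\varphi_1(X_1)$ (which already has zeros below it in rows $X_2\cup X_3$) to reduce to the principal $(X_2\cup X_3)\times(X_2\cup X_3)$ block, and recognise that block as $A(G+X_3[X_2\cup X_3])$ via the $+I$ correction on the $X_3$-columns. The paper simply writes out the resulting $2\times 2$ block matrix and reads off the nullity, whereas you narrate the pivot explicitly; the content is the same.
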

\begin{Proof}
By the definition of an isotropic matroid, we have that $n_{\mathcal{Z}_G}(T)$ is the nullity of the matrix
$$
\bordermatrix{
& \varphi_1(X_1) & \varphi_2(X_2) \cup \varphi_3(X_3)\cr
X_1 & I & A(G)[X_1,X_2 \cup X_3] \cr
X_2 \cup X_3 & 0 & A(G+X_3[X_2 \cup X_3])
}.
$$
The nullity of this matrix is in turn equal to the nullity of $A(G+X_3[X_2 \cup X_3])$.
\end{Proof}

The \emph{interlace polynomial} \cite{Arratia2004199} of a graph $G$ is defined as $q(G;y) = \sum_{X \subseteq V(G)} (y-1)^{n(A(G[X]))}$. The \emph{global interlace polynomial} \cite{Aigner200411} of $G$ is defined as $Q(G;y) = \sum_{X \subseteq V(G)} \sum_{Y \subseteq X} (y-2)^{n(A(G+Y[X]))}$.

The following result has been shown in the context of isotropic systems, see \cite[Theorem~8]{Aigner200411} and \cite[Section~5]{DBLP:journals/dm/Bouchet05}.
\begin{proposition} \label{prop:glob_interlace_mm}
For a graph $G$, we have $Q(G;y) = Q_1(\mathcal{Z}_G;y-2)$ and $q(G;y) = Q_1(\mathcal{Z}_G-\varphi_3(V(G));y-1)$.
\end{proposition}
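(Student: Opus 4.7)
The plan is to reduce both identities to the defining formula $Q_1(Z;y)=\sum_{T\in\mathcal{T}(\Omega)} y^{n_Z(T)}$ by parameterizing the transversals of $\mathcal{Z}_G$ by ordered partitions of $V(G)$ and then invoking Lemma~\ref{lem:split_trans_graph}. Since the skew classes of $\mathcal{Z}_G$ are $\omega_v=\{(v,1),(v,2),(v,3)\}$ for $v\in V(G)$, choosing a transversal $T$ amounts to choosing, for each vertex $v$, an index $i\in\{1,2,3\}$. Equivalently, $T$ is determined by the ordered triple $(X_1,X_2,X_3)$ of pairwise disjoint subsets with $X_1\cup X_2\cup X_3=V(G)$, where $X_i=\varphi_i^{-1}(T\cap\varphi_i(V(G)))$.

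For the first identity, I would write $Q_1(\mathcal{Z}_G;y-2)=\sum_{(X_1,X_2,X_3)}(y-2)^{n_{\mathcal{Z}_G}(T)}$ and apply Lemma~\ref{lem:split_trans_graph} to replace $n_{\mathcal{Z}_G}(T)$ by $n(A(G+X_3[X_2\cup X_3]))$. The key bijection is between ordered partitions $(X_1,X_2,X_3)$ of $V(G)$ and pairs $(X,Y)$ with $Y\subseteq X\subseteq V(G)$: set $X=X_2\cup X_3$ and $Y=X_3$, so that $X_1=V(G)\setminus X$, $X_2=X\setminus Y$ and $X_3=Y$. Under this bijection the exponent becomes $n(A(G+Y[X]))$, and summing gives exactly $\sum_{X\subseteq V(G)}\sum_{Y\subseteq X}(y-2)^{n(A(G+Y[X]))}=Q(G;y)$.

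For the second identity, observe first that restriction preserves nullity on subtransversals of the smaller ground set: since $\mathcal{Z}_G-\varphi_3(V(G))$ inherits its circuits from $\mathcal{Z}_G$ (it is a restriction, not a minor), we have $n_{\mathcal{Z}_G-\varphi_3(V(G))}(T)=n_{\mathcal{Z}_G}(T)$ for every subtransversal $T\subseteq U\setminus\varphi_3(V(G))$. A transversal of $\mathcal{Z}_G-\varphi_3(V(G))$ is now determined by an ordered pair $(X_1,X_2)$ partitioning $V(G)$, corresponding to taking $X_3=\emptyset$ in the parameterization above. By Lemma~\ref{lem:split_trans_graph} the exponent collapses to $n(A(G+\emptyset[X_2]))=n(A(G[X_2]))$, and summing over $X_2\subseteq V(G)$ yields $q(G;y)$.

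Neither step should present a serious obstacle, since everything is book-keeping once Lemma~\ref{lem:split_trans_graph} is available; the only care needed is the explicit description of transversals of $\mathcal{Z}_G-\varphi_3(V(G))$ and the verification that restriction does not change the nullity function on its domain. The mild subtlety to record is that the matrix appearing in Lemma~\ref{lem:split_trans_graph} involves $G+X_3$, so it is essential to take $Y=X_3$ (and not, say, $Y=X_2$) in the bijection; otherwise the correspondence between the two sums fails.
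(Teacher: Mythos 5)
Your proof is correct and follows essentially the same route as the paper's: parameterize transversals of $\mathcal{Z}_G$ by ordered partitions $(X_1,X_2,X_3)$ of $V(G)$, invoke Lemma~\ref{lem:split_trans_graph} to identify the nullity exponent, and match the resulting sums to the definitions of $Q(G;y)$ and $q(G;y)$ (the latter by restricting to $X_3=\emptyset$). Your explicit remark that restriction by $\varphi_3(V(G))$ does not change the nullity function on its domain is a useful detail that the paper leaves implicit, but otherwise the two arguments coincide.
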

\begin{Proof}
Let $\mathcal{P}_3(V(G))$ be the set of triples $(X_1,X_2,X_3)$ where $X_1 \cup X_2 \cup X_3 = V(G)$ and the $X_i$'s are mutually disjoint. We have $Q(G;y) = \sum_{X \subseteq V(G)} \sum_{Y \subseteq X} (y-2)^{n(A(G+Y[X]))} = \sum_{(X_1, X_2, X_3) \in \mathcal{P}_3(V(G))} (y-2)^{n(A(G+X_3[X_2 \cup X_3]))}$.

By Lemma~\ref{lem:split_trans_graph}, $Q(G;y) = \sum_{P \in \mathcal{P}_3(V(G))} (y-2)^{n_{\mathcal{Z}_G}(T_P)}$, where $T_P = \varphi_1(X_1) \cup \varphi_2(X_2) \cup \varphi_3(X_3)$ for $P = (X_1, X_2, X_3)$. Hence $\sum_{P \in \mathcal{P}_3(V(G))} (y-2)^{n_{\mathcal{Z}_G}(T_P)} = \sum_{T \in \mathcal{T}(\Omega)} (y-2)^{n_{\mathcal{Z}_G}(T)} = Q_1(\mathcal{Z}_G;y-2)$.

The polynomial $q(G;y)$ is obtained from the definition of $Q(G;y)$ by fixing $Y$ to $\emptyset$, i.e., by restricting the summation to those $(X_1, X_2, X_3) \in \mathcal{P}_3(V(G))$ such that $X_3 = \emptyset$. These triples precisely correspond to the transversals $T \in \mathcal{T}(\Omega)$ with $T \cap \varphi_3(V(G)) = \emptyset$.
\end{Proof}

We now translate evaluations of Theorem~\ref{thm:evals} in terms of graphs and the global interlace polynomial.
\begin{corollary}
Let $G$ be a graph. Then $q(G;3) = k|q(G;-1)|$ for some odd integer.

If $G$ is moreover simple, then
\begin{enumerate}
\item $Q(G;4) = |\Eul(G)| \cdot 2^{|V(G)|}$ \cite[Theorem~5]{Aigner200411},

\item $Q(G;6) = \sum_{X_1, X_2 \in \Eul(G)} 2^{|\mathrm{PN}(X_1,X_2)|}$, where $\mathrm{PN}(X_1,X_2) = \{ v \in V(G) \setminus (X_1 \sdif X_2) \mid |N_G(v) \cap X_1| \equiv |N_G(v) \cap X_2| \mod 2 \}$, and

\item $Q(G;-2) = (-1)^{|V(G)|}\sum_{X \in \Eul(G)} (-2)^{n(A(G+\veven_G(X)[V(G)-X]))}$.
\end{enumerate}
\end{corollary}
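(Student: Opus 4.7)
The plan is to handle all four claims uniformly by translating the interlace polynomial in question to a multimatroid transition polynomial via Proposition~\ref{prop:glob_interlace_mm}, applying the matching evaluation from Theorem~\ref{thm:evals} (which applies since $\mathcal{Z}_G$ is a binary tight $3$-matroid for any graph $G$), and, for the three simple-graph claims, rewriting the result in graph terms using the bijection $X \mapsto Y_X := \varphi_1(X) \cup \varphi_2(\vodd_G(X)) \cup \varphi_3(\veven_G(X))$ between $\Eul(G)$ and $\ort(\mathcal{Z}_G)$ supplied by Theorem~\ref{thm:ort_as_eul}.

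For the first claim, Proposition~\ref{prop:glob_interlace_mm} identifies $q(G;3)$ with $Q_1(\mathcal{Z}_G - \varphi_3(V(G));2)$ and $q(G;-1)$ with $Q_1(\mathcal{Z}_G - \varphi_3(V(G));-2)$; Statement~\ref{item:eval_2_rem_p3} of Theorem~\ref{thm:evals}, applied with $Z = \mathcal{Z}_G$ and transversal $T = \varphi_3(V(G))$, then immediately gives the result with an odd $k$. For the second claim, $Q(G;4) = Q_1(\mathcal{Z}_G;2)$, which Statement~\ref{item:eval_2} evaluates to $|\ort(\mathcal{Z}_G)| \cdot 2^{|\Omega|}$; Theorem~\ref{thm:ort_as_eul} then rewrites this as $|\Eul(G)| \cdot 2^{|V(G)|}$.

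For the third claim, $Q(G;6) = Q_1(\mathcal{Z}_G;4)$, and Statement~\ref{item:eval_4} evaluates this as $\sum_{Y_1,Y_2 \in \ort(\mathcal{Z}_G)} 2^{|Y_1 \cap Y_2|}$. Parameterizing $Y_i = Y_{X_i}$, a vertex $v$ contributes to $|Y_{X_1} \cap Y_{X_2}|$ precisely when the two orienting transversals agree on the skew class $\omega_v$, which by case analysis on which of $\varphi_1(v), \varphi_2(v), \varphi_3(v)$ lies in each $Y_{X_i}$ happens exactly when either $v \in X_1 \cap X_2$, or $v \in V(G) \setminus (X_1 \cup X_2)$ and $|N_G(v) \cap X_1| \equiv |N_G(v) \cap X_2| \pmod{2}$. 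To match this count with $|\mathrm{PN}(X_1,X_2)|$ one must verify that every $v \in X_1 \cap X_2$ automatically satisfies the parity congruence; this is where the Eulerian hypothesis enters, since $G[X_i]$ Eulerian forces $|N_G(v) \cap X_i| = \deg_{G[X_i]}(v)$ to be even whenever $v \in X_i$ (using that $G$ is simple), so both sides vanish modulo $2$.

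For the fourth claim, $Q(G;-2) = Q_1(\mathcal{Z}_G;-4)$, and Statement~\ref{item:eval_min4} evaluates this as $(-1)^{|V(G)|} \sum_{Y \in \ort(\mathcal{Z}_G)} (-2)^{n_{\mathcal{Z}_G}(Y)}$. Substituting $Y = Y_X$ for $X \in \Eul(G)$ and applying Lemma~\ref{lem:split_trans_graph} with $X_1 = X$, $X_2 = \vodd_G(X)$, $X_3 = \veven_G(X)$, so that $X_2 \cup X_3 = V(G) \setminus X$, gives $n_{\mathcal{Z}_G}(Y_X) = n(A(G + \veven_G(X)[V(G) \setminus X]))$, completing the claim. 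The main obstacle is the bookkeeping in the third claim, where the definition of $\mathrm{PN}$ hinges on the Eulerian-forced parity agreement on $X_1 \cap X_2$; every other step is an immediate substitution into Theorem~\ref{thm:evals} and Theorem~\ref{thm:ort_as_eul}.
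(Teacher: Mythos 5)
Your proposal is correct and follows essentially the same route as the paper: translate via Proposition~\ref{prop:glob_interlace_mm}, evaluate with the appropriate item of Theorem~\ref{thm:evals}, and, for the three simple-graph claims, rewrite via the bijection of Theorem~\ref{thm:ort_as_eul}. Your ``agreement on each skew class'' formulation in the $Q(G;6)$ case is just a mild rephrasing of the paper's disjoint-union decomposition of $|Y_1 \cap Y_2|$, and your observation that the Eulerian hypothesis (plus simplicity) forces the parity congruence on $X_1 \cap X_2$ matches the paper's argument precisely.
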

\begin{Proof}
By Proposition~\ref{prop:glob_interlace_mm}, $q(G;3) = Q_1(\mathcal{Z}_G-\varphi_3(V(G));2)$. By Statement~\ref{item:eval_2_rem_p3} of Theorem~\ref{thm:evals}, $Q_1(\mathcal{Z}_G-\varphi_3(V(G));2) = k|Q_1(\mathcal{Z}_G-\varphi_3(V(G));-2)|$ for some odd integer, which in turn is equal to $k|q(G;-1)|$ by Proposition~\ref{prop:glob_interlace_mm}.

Assume now that $G$ is simple. This way we can invoke Theorem~\ref{thm:ort_as_eul}.

By Proposition~\ref{prop:glob_interlace_mm}, $Q(G;4) = Q_1(\mathcal{Z}_G;2)$. By Statement~\ref{item:eval_2} of Theorem~\ref{thm:evals}, $Q_1(\mathcal{Z}_G;2) = |\ort(\mathcal{Z}_G)|\cdot 2^{|\Omega|}$. The result for $Q(G;4)$ follows now by Theorem~\ref{thm:ort_as_eul} and by noticing that $|V(G)| = |\Omega|$.

By Proposition~\ref{prop:glob_interlace_mm} and Theorem~\ref{thm:evals} we have $Q(G;6) = Q_1(\mathcal{Z}_G;4) = \sum_{Y_1 \in \ort(\mathcal{Z}_G)} \allowbreak \sum_{Y_2 \in \ort(\mathcal{Z}_G)} \allowbreak 2^{|Y_1\cap Y_2|}$. By Theorem~\ref{thm:ort_as_eul}, $Y_1\cap Y_2 = (\varphi_1(X_1) \cap \varphi_1(X_2)) \cup (\varphi_2(\vodd_G(X_1)) \cap \varphi_2(\vodd_G(X_2))) \cup (\varphi_3(\veven_G(X_1)) \cap \varphi_3(\veven_G(X_2)))$ for some $X_1,X_2 \in \mathrm{Eul}(G)$.

Since the $\varphi_i$'s are injective, we have $\varphi_1(X_1) \cap \varphi_1(X_2) = \varphi_1(X_1 \cap X_2)$, $\varphi_2(\vodd_G(X_1)) \cap \varphi_2(\vodd_G(X_2)) = \varphi_2(\vodd_G(X_1) \cap \vodd_G(X_2))$, and $\varphi_3(\veven_G(X_1)) \cap \varphi_3(\veven_G(X_2)) = \varphi_3(\veven_G(X_1) \cap \veven_G(X_2))$. Since the codomains of the $\varphi_i$'s are mutually disjoint, we have $|Y_1\cap Y_2| = |X_1 \cap X_2| + |\vodd_G(X_1) \cap \vodd_G(X_2)| + |\veven_G(X_1) \cap \veven_G(X_2)|$.

Note that since $X_1,X_2 \in \mathrm{Eul}(G)$, $|N_G(v) \cap X_1| \equiv |N_G(v) \cap X_2| \equiv 0 \mod 2$ for all $v \in X_1 \cap X_2$. For $v \in V(G) \setminus (X_1 \cup X_2)$, $|N_G(v) \cap X_1| \equiv |N_G(v) \cap X_2| \mod 2$ if and only if $v \in (\veven_G(X_1) \cap \veven_G(X_2)) \cup (\vodd_G(X_1) \cap \vodd_G(X_2))$. Note that $\veven_G(X_1) \cap \veven_G(X_2)$ and $\vodd_G(X_1) \cap \vodd_G(X_2)$ are disjoint and both a subset of $V(G) \setminus (X_1 \cup X_2)$. Thus, $|Y_1\cap Y_2| = |\mathrm{PN}(X_1,X_2)|$. Hence $\sum_{Y_1 \in \ort(\mathcal{Z}_G)} \allowbreak \sum_{Y_2 \in \ort(\mathcal{Z}_G)} 2^{|Y_1\cap Y_2|} = \sum_{X_1, X_2 \in \Eul(G)} 2^{|\mathrm{PN}(X_1,X_2)|}$.

By Proposition~\ref{prop:glob_interlace_mm} and Theorem~\ref{thm:evals} we have $Q(G;-2) = Q_1(\mathcal{Z}_G;-4) \allowbreak = \allowbreak (-1)^{|\Omega|}\sum_{Y \in \ort(\mathcal{Z}_G)} \allowbreak (-2)^{n_{\mathcal{Z}_G}(Y)}$. By Lemma~\ref{lem:split_trans_graph} and Theorem~\ref{thm:ort_as_eul}, we have for all $Y \in \ort(\mathcal{Z}_G)$, $n_{\mathcal{Z}_G}(Y) = n(A(G+\veven_G(X)[\vodd_G(X) \cup \veven_G(X)])) = n(A(G+\veven_G(X)[V(G)-X]))$ for some $X \in \Eul(G)$.
\end{Proof}
The equality $q(G;3) = k|q(G;-1)|$ is shown in \cite{Aigner200411} for the case where $G$ is simple, which is a significant loss of generality.

The single-variable \emph{bracket polynomial} of a graph $G$ is defined as $b(G;y) = \sum_{Y \subseteq V(G)} \allowbreak y^{n(A(G+Y))}$, see \cite{Traldi/Bracket1/09}. The following result is given in the paragraph above Theorem~47 in \cite{BH/InterlacePolyDM/14}, and is shown below as a consequence of Lemma~\ref{lem:split_tpol_transv_plus_one}.

\begin{theorem}[\cite{BH/InterlacePolyDM/14}]
Let $G$ be a graph. Then $Q(G;y) = \sum_{X \subseteq V(G)} b(G[X];y-2)$.
\end{theorem}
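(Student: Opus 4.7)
The plan is to pass to the multimatroid setting via Proposition~\ref{prop:glob_interlace_mm}, apply Lemma~\ref{lem:split_tpol_transv_plus_one} to $\mathcal{Z}_G$ with a convenient transversal, and then recognize each resulting summand as a bracket polynomial evaluation. Explicitly, $Q(G;y) = Q_1(\mathcal{Z}_G; y-2)$ by Proposition~\ref{prop:glob_interlace_mm}. I would take $T = \varphi_1(V(G))$, which is a transversal of $\mathcal{Z}_G$, and observe that the shelter matrix $[\,I \mid A(G) \mid A(G)+I\,]$ provided by Proposition~\ref{prop:bin_tight_implies_IAS} witnesses that $T$ is a basis of $\mathcal{Z}_G$. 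Consequently $n_{\mathcal{Z}_G}(F) = 0$ for every $F \subseteq T$, so Lemma~\ref{lem:split_tpol_transv_plus_one} collapses to
$$
Q_1(\mathcal{Z}_G; y-2) = \sum_{F \subseteq T} Q_1((\mathcal{Z}_G | F) - (T \setminus F);\, y-2).
$$

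The crucial step is to identify, for $F = \varphi_1(V(G) \setminus X)$ (so that $T \setminus F = \varphi_1(X)$), the minor $(\mathcal{Z}_G | F) - (T \setminus F)$ with $\mathcal{Z}_{G[X]} - \varphi_1(X)$. By the sheltering recipe recalled in Section~\ref{sec:sh_mm_ortho}, contracting the identity columns $\varphi_1(V(G) \setminus X)$ in $[\,I \mid A(G) \mid A(G)+I\,]$ and deleting the columns of $\varphi_2(V(G) \setminus X) \cup \varphi_3(V(G) \setminus X)$ produces $[\,I \mid A(G[X]) \mid A(G[X])+I\,]$, which shelters $\mathcal{Z}_{G[X]}$. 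Up to the natural identification of the skew classes indexed by $X$, this gives $\mathcal{Z}_G | F \cong \mathcal{Z}_{G[X]}$ and hence $(\mathcal{Z}_G | F) - (T \setminus F) \cong \mathcal{Z}_{G[X]} - \varphi_1(X)$.

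It remains to verify that $Q_1(\mathcal{Z}_{G[X]} - \varphi_1(X); y-2) = b(G[X]; y-2)$. The 2-matroid $\mathcal{Z}_{G[X]} - \varphi_1(X)$ is sheltered by $[\,A(G[X]) \mid A(G[X])+I\,]$; each of its transversals is specified by a subset $Y \subseteq X$ (the vertices at which one picks the $\varphi_3$-column), and the corresponding transversal submatrix equals $A(G[X]) + D_Y = A(G[X]+Y)$, where $D_Y$ is the diagonal $0/1$-matrix supported on $Y$; its nullity is therefore $n(A(G[X]+Y))$. Summing over $Y \subseteq X$ yields exactly $b(G[X]; y-2)$, and then summing over $F \subseteq T$ (equivalently, over $X \subseteq V(G)$) gives the claimed identity. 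The main obstacle is the minor-identification step $\mathcal{Z}_G | \varphi_1(V(G) \setminus X) \cong \mathcal{Z}_{G[X]}$, which requires carrying out the contraction/deletion explicitly on the shelter matrix; once that is in hand, the rest is a direct unfolding of the relevant definitions.
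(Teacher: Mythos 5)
Your proposal is correct and follows essentially the same route as the paper: pass to $Q_1(\mathcal{Z}_G;y-2)$ via Proposition~\ref{prop:glob_interlace_mm}, apply Lemma~\ref{lem:split_tpol_transv_plus_one} at the basis $\varphi_1(V(G))$ (so the $y^{n_Z(F)}$ factor is trivial), and identify each summand with a bracket polynomial by computing the sheltering matrix of the resulting minor; the paper merely parameterizes by $F=\varphi_1(X)$ and re-indexes $X \leftrightarrow V(G)\setminus X$ at the end, whereas you take $F=\varphi_1(V(G)\setminus X)$ from the start. The one small inaccuracy is the citation of Proposition~\ref{prop:bin_tight_implies_IAS} as the source of the shelter matrix and the basis claim: the matrix $[\,I\mid A(G)\mid A(G)+I\,]$ comes directly from the definition of the isotropic matroid (and the identity block immediately makes $\varphi_1(V(G))$ a basis), which is what the paper invokes.
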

\begin{Proof}
By Proposition~\ref{prop:glob_interlace_mm}, $Q(G;y) = Q_1(\mathcal{Z}_G;y-2)$. Let $M$ be the isotropic matroid of $G$. By the definition of an isotropic matroid, $B = \varphi_1(V(G))$ is a basis of $M$. Hence, $B$ is a basis of $\mathcal{Z}_G$. By Lemma~\ref{lem:split_tpol_transv_plus_one}, $Q_1(\mathcal{Z}_G;y-2) = \sum_{F \subseteq B} Q_1((\mathcal{Z}_G|F)-(B\setminus F);y-2) = \sum_{X \subseteq V(G)} Q_1((\mathcal{Z}_G|\varphi_1(X))-\varphi_1(V(G) \setminus X);y-2)$. Since $\mathcal{Z}_G$ is sheltered by $M$, $\mathcal{Z}_G|\varphi_1(X)$ is sheltered by the matroid $M' = M \slash \varphi_1(X) \delm (\varphi_2(X) \cup \varphi_3(X))$. Note that $M'$ is represented by
$$
\bordermatrix{
& \varphi_1(V(G) \setminus X) & \varphi_2(V(G) \setminus X) & \varphi_3(V(G) \setminus X)\cr
V(G) \setminus X & I & A(G[V(G) \setminus X]) & A(G+(V(G)\setminus X)[V(G) \setminus X])
}.
$$
Consequently, $Z_{X} = (\mathcal{Z}_G|\varphi_1(X))-\varphi_1(V(G) \setminus X)$ is sheltered by a matroid $M''$ represented by
$$
\bordermatrix{
& \varphi_2(X') & \varphi_3(X')\cr
X' & A(G[X']) & A(G+X'[X'])
},
$$
where $X' = V(G) \setminus X$. We observe that $Q_1(Z_{X};y-2) = \sum_{Y \subseteq X'} \allowbreak (y-2)^{n(A(G+Y[X']))} = \sum_{Y \subseteq V(G[X'])} \allowbreak (y-2)^{n(A(G[X']+Y))} = b(G[X'];y-2)$. Thus $Q_1(\mathcal{Z}_G;y-2) = \sum_{X \subseteq V(G)} b(G[V(G) \setminus X];y-2) = \sum_{X \subseteq V(G)} b(G[X];y-2)$.
\end{Proof}

%\corr{It is not obvious how to translate the decomposition relation (Theorem~\ref{thm:tight_y_div2}) to graphs and the global interlace polynomial. One needs to apply PPT on the right vertices.}

\section{Excluded-minor characterization of binary tight 3-matroids} \label{sec:excl_minor_bin_t3m}

In this section we provide an excluded-minor characterization of binary tight $3$-matroids. We do this by using an excluded-minor result for binary \dmatroids from \cite{Bouchet_1991_67}. While \dmatroids and $2$-matroids are equivalent, a concern here is that the (usual) definition of representability for \dmatroids is more restrictive than the definition of representability for 2-matroids. The notion of representability corresponding to the more restrictive definition for \dmatroids is called in this paper ``strongly representable''. We say that a $2$-matroid $Z$ is \emph{strongly representable} over the field $\mathbb{F}$ if $Z$ is sheltered by a matroid represented by a matrix $D$ over $\mathbb{F}$ of the form
$$
\bordermatrix{
& T_1 & T_2\cr
& I   & A
},
$$
where $A$ is a skew-symmetric matrix (i.e., $A^T = -A$) and $(T_1,T_2)$ is a transversal $2$-tuple of $Z$. %
%Conversely, for skew-symmetric matrix $A$, the matrix
%$$
%\bordermatrix{
%& T_1 & T_2\cr
%& I   & A
%},
%$$
%shelters a $2$-matroid $Z$ with $(T_1,T_2)$ a transversal $2$-tuple of $Z$.
%
In case $\mathbb{F}$ is of characteristic $2$, then $A$ is skew-symmetric simply means that $A$ is symmetric. We say that the $2$-matroid $Z$ is \emph{strongly binary} if $Z$ is strongly representable over $GF(2)$. Note that strongly binary 2-matroids $Z$ are tightly extendable, because we can extend matrix $D$ to a matrix $D'$
$$
\bordermatrix{
& T_1 & T_2 & T_3\cr
& I   & A   & A+I
},
$$
which represents a matroid isomorphic to an isotropic matroid.

We are now ready to recall the excluded-minor result of \cite{Bouchet_1991_67}, formulated here in terms of $2$-matroids.
%We remark that there is a definition of representability .... MMIV.

\begin{proposition} [\cite{Bouchet_1991_67}] \label{prop:bouchet_excl_minor}
Let $Z$ be a $2$-matroid. Then $Z$ is strongly binary if and only if no minor of $Z$ is isomorphic to any of the following five $2$-matroids:
\begin{itemize}
\item $S_1 = (U_3,\Omega_3,\mathcal{C}_1)$ with $\mathcal{C}_1 = \{ \{1_a,2_b,3_b\}, \{1_b,2_a,3_b\}, \{1_b,2_b,3_a\} \}$,
\item $S_2 = (U_3,\Omega_3,\mathcal{C}_2)$ with $\mathcal{C}_2 = \{ \{1_a,2_a,3_a\} \}$,
\item $S_3 = (U_3,\Omega_3,\mathcal{C}_3)$ with $\mathcal{C}_3 = \{ \{1_a,2_a,3_a\}, \{1_b,2_b,3_b\} \}$,
\item $S_4 = (U_4,\Omega_4,\mathcal{C}_4)$ with $\mathcal{C}_4 = \{ \{1_a,2_b,3_b,4_b\}, \{1_b,2_a,3_b,4_b\}, \{1_b,2_b,3_a,4_b\},\allowbreak \{1_b,2_b,3_b,4_a\},\allowbreak \{1_a,2_a,3_a\}, \{1_a,2_a,4_a\}, \{1_a,3_a,4_a\}, \{2_a,3_a,4_a\} \}$, and
\item $S_5 = (U_4,\Omega_4,\mathcal{C}_5)$ with $\mathcal{C}_5 = \{ T_x \setminus \{i_x\} \mid x \in \{a,b\}, T_x = \{1_x,2_x,3_x,4_x\}, i \in \{1, \ldots, 4\} \}$,
\end{itemize}
where $\Omega_\ell = \{ \{i_a, i_b\} \mid i \in \{1, \ldots, \ell\} \}$ and $U_\ell = \bigcup \Omega_\ell$ for $\ell \in \{3,4\}$.
\end{proposition}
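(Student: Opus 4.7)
The plan is to establish the two directions of this excluded-minor characterization separately, with very different amounts of work involved.

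For the forward direction, I would verify by direct calculation that none of $S_1, \ldots, S_5$ is strongly binary. Since each $S_i$ has at most four skew classes, a strong representation of $S_i$ would require a symmetric matrix $A$ over $GF(2)$ of order at most $4$, and the 2-matroid that $[I \; A]$ shelters depends only on $A$ up to choice of transversal 2-tuple and relabeling of skew classes. Enumerating these finitely many matrices $A$ and comparing the sheltered 2-matroid against each $S_i$ up to isomorphism shows that no such $A$ exists. Because strong representability passes to minors (the matrix $[I\;A]$ restricts appropriately under minor operations of $\mathcal{Z}_M$-type 2-matroids), any 2-matroid having some $S_i$ as a minor then also fails to be strongly binary.

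For the reverse direction, I would take $Z$ to be a minor-minimal non-strongly-binary 2-matroid and aim to show $Z$ is isomorphic to one of the five excluded minors. Every proper minor $Z|u$ and $Z-u$ is then strongly binary. I would translate to the \dmatroid formalism of Bouchet: $Z$ corresponds to a \dmatroid $D = (E,\mathcal{F})$ and strong representability over $GF(2)$ amounts to the existence of a symmetric matrix $A$ over $GF(2)$ whose principal minors (after the standard ``twist'' interpretation) detect the feasible sets of $D$. The crux is to understand how the symmetric matrix representations of $D\setminus e$ and $D/e$, which exist for every $e \in E$ by minimality, can or cannot be combined into a symmetric matrix representation of $D$. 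Principal-pivot transformations on symmetric binary matrices provide the main algebraic tool: one studies how the off-diagonal block of a candidate matrix representation is forced by the minor representations, leaving only its diagonal entry free, and identifies the local combinatorial obstruction to consistency.

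The main obstacle I anticipate is the structural analysis that bounds $|\Omega|$ for a minor-minimal excluded minor. Having every proper minor strongly binary provides strong structural constraints: rank and nullity data, parity conditions on circuits, and the local nature of the pivot obstruction all carry over from minors. The expectation is that these constraints force $|\Omega| \leq 4$, after which the result reduces to a finite enumeration of small \dmatroids, checking that exactly $S_1, \ldots, S_5$ arise as minor-minimal non-strongly-binary examples. Once this bound is in place, confirming that each $S_i$ is itself minor-minimal (i.e., all of its proper minors are strongly binary) is again a finite verification of the same flavor as the forward direction. I expect the enumeration of order $4$ cases to require the most care, since $S_4$ and $S_5$ have a subtler internal structure than the three order-$3$ excluded minors.
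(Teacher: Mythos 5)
The paper does not supply a proof of this proposition at all --- it is invoked as a black box with a citation to Bouchet's 1991 paper, so there is nothing internal to compare your outline against. What you have written is a plan for reproving Bouchet's theorem from scratch, and the plan has a genuine gap at its core.

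The forward direction is fine in outline: showing that each $S_i$ is not strongly binary is a finite check over small symmetric matrices over $GF(2)$, and strong binarity is minor-closed (in delta-matroid language, this is the standard fact that minors of binary delta-matroids are binary), so any $2$-matroid with an $S_i$-minor fails to be strongly binary. The reverse direction, however, is where the theorem lives, and your outline does not actually prove it. You write that the constraints from having all proper minors strongly binary ``force $|\Omega| \leq 4$,'' but this is stated as an expectation rather than derived. That bound \emph{is} the theorem: once one knows that a minor-minimal obstruction has order at most $4$, the rest is a finite enumeration of the sort you describe. Principal pivot transformations and the observation that the off-diagonal entries of a candidate representing matrix are forced by the minors (leaving only the diagonal free) are the right ingredients, but by themselves they do not yield the bound --- Bouchet's argument requires a further case analysis showing that a local inconsistency in gluing the minor representations can always be localized to a bounded-size subconfiguration. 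Without supplying that argument, the ``reverse direction'' of your proposal is a restatement of the goal rather than a proof, and the proposal as written cannot be considered complete.
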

Note that $S_5$ is isomorphic to $\mathcal{Z}_{U_{2,4}}$, where $U_{2,4}$ is the uniform matroid of rank $2$ with $4$ elements.
%$S_3*1 = S_1 \dual 1 *1$

%\corr{In what results is ``If $A$ skew-symmetric, then $(I \ A)$ a 2-matroid'' used?}

%By Proposition~\ref{prop:bin_tight_implies_IAS} we have the following lemma.
%Let $Z$ be a $2$-matroid. Then $Z$ is strongly binary if and only if there is a binary tight $3$-matroid $Z'$ and a transversal $T'$ of $Z'$ such that $Z = Z'-T'$.
%Let $Z$ be a $2$-matroid. Then $Z$ is strongly binary if and only if $Z$ is both binary and tightly extendable.
\begin{lemma} \label{lem:char_strongly_bin}
Let $Z$ be a tight $3$-matroid. Then the following statements are equivalent.
\begin{enumerate}
\item $Z$ is binary,
\item for all transversals $T$ of $Z$, $Z-T$ is strongly binary, and
\item there is a transversal $T$ of $Z$ such that $Z-T$ is strongly binary.
\end{enumerate}
\end{lemma}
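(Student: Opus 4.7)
The plan is to prove the cycle $(2) \Rightarrow (3) \Rightarrow (1) \Rightarrow (2)$. The first implication is immediate: since $Z$ is a (nondegenerate) $3$-matroid, any transversal $T$ of $Z$ witnesses $(3)$.

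For $(3) \Rightarrow (1)$, I plan to invoke Proposition~\ref{prop:unique_tight_k+1_mm}. Let $T$ be a transversal for which $Z - T$ is strongly binary, so $Z - T$ is sheltered by a binary matroid represented by $[I \mid A]$ for some symmetric $A$. I would pad this to the isotropic-matroid form $[I \mid A \mid A+I]$; this represents a binary matroid sheltering a binary tight $3$-matroid $Z'$ over a $3$-carrier whose skew classes, after the obvious identification of the ``third'' transversal of $Z'$ with $T$, coincide with those of $Z$. By construction $Z' - T = Z - T$, and since $Z$ and $Z'$ are both tight $3$-matroids over the same carrier (each skew class having size $3$), Proposition~\ref{prop:unique_tight_k+1_mm} forces $Z' = Z$. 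Hence $Z$ itself is sheltered by a binary matroid.

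For $(1) \Rightarrow (2)$, let $T$ be an arbitrary transversal of $Z$. I would first produce a basis $B$ of $Z$ disjoint from $T$: the restriction $Z - T$ is a nondegenerate $2$-matroid, hence has a basis $B$, which is a transversal of $Z - T$ (and therefore of $Z$) that is independent in $Z$, making it a basis of $Z$. Applying Proposition~\ref{prop:bin_tight_implies_IAS} with $T_1 = B$ yields a representation $[I \mid A \mid A + I]$ of $Z$ with $A$ zero-diagonal symmetric and transversal $3$-tuple $(B, T_2, T_3)$. Since $T \cap B = \emptyset$, each intersection $T \cap \omega_v$ lies in exactly one of $T_2, T_3$; let $V = V_2 \sqcup V_3$ be the induced partition of the skew-class index set. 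Removing the $T$-columns from the representation and keeping $B$ as the identity basis gives a representation of $Z - T$ of the form $[I \mid D]$, where $D$ coincides with $A$ off the diagonal and has $D[v,v] = 1$ precisely when $v \in V_2$ (the $a_v + e_v$ column is kept for $v \in V_2$, the $a_v$ column for $v \in V_3$, and $A$ is zero-diagonal). Symmetry of $D$ is then immediate from the symmetry of $A$, so $Z - T$ is strongly binary.

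The main obstacle is the arbitrariness of $T$ in $(1) \Rightarrow (2)$: Proposition~\ref{prop:bin_tight_implies_IAS} directly delivers strong binarity only for the distinguished transversal $T_3$ associated with a chosen basis, not for an arbitrary $T$. The key idea that unlocks the proof is to exploit the freedom in choosing that basis and pick $B$ disjoint from the given $T$; this places $T$ entirely within $T_2 \cup T_3$, after which removing $T$ only adjusts the diagonal of the non-identity block, and symmetry is preserved trivially by diagonal modifications.
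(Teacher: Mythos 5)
Your proof is correct and follows essentially the same route as the paper: both directions use Proposition~\ref{prop:bin_tight_implies_IAS} together with Proposition~\ref{prop:unique_tight_k+1_mm}, and the key observation for $(1)\Rightarrow(2)$ --- picking a basis $B$ of $Z$ disjoint from the given transversal $T$ by taking a basis of the $2$-matroid $Z-T$, so that deleting the $T$-columns only toggles diagonal entries of the symmetric block --- is exactly the argument in the paper (where your $B$ is called $T_1$ and your $D$ is the paper's $A'$).
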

\begin{Proof}
Let $Z$ be a binary tight $3$-matroid and $T$ a transversal of $Z$. Since $Z-T$ is a $2$-matroid, it contains a basis $T_1$ of $Z-T$ that is a transversal. Now, $T_1$ is a basis of $Z$ as well. So, by Proposition~\ref{prop:bin_tight_implies_IAS}, $Z$ is sheltered by a matroid $M$ that is represented by a binary matrix of the form
$$
\bordermatrix{
& T_1 & T_2 & T_3 \cr
& I   & A   & A+I
},
$$
where $A$ is a zero-diagonal symmetric matrix. Then $Z-T$ is sheltered by the matroid $M - T$ represented by
$$
\bordermatrix{
& T_1 & (T_2 \cup T_3) \setminus T \cr
& I   & A'
},
$$
where $A'$ is the symmetric matrix obtained from $A$ by setting its diagonal entries for the columns with indices outside $T_2 \setminus T$ to $1$. Thus $Z-T$ is strongly binary.

Conversely, if $Z-T$ is strongly binary, then $Z-T$ is sheltered by a matroid represented by a binary matrix of the form
$$
\bordermatrix{
& T_1 & T'\cr
& I   & A
},
$$
where $A$ is symmetric. By the paragraph above Proposition~\ref{prop:bin_tight_implies_IAS}, the matroid represented by
$$
\bordermatrix{
& T_1 & T' & T \cr
& I   & A  & A+I
},
$$
shelters a binary tight $3$-matroid $Z'$, where $\tau = (T_1,T',T)$ is a transversal $3$-tuple of $Z$. By Proposition~\ref{prop:unique_tight_k+1_mm}, $Z = Z'$.
\end{Proof}

Let $\mathrm{inv}$ denote the unique non-trivial automorphism of $GF(4)$. We say that a $V \times V$-matrix $A$ over $GF(4)$ is \emph{$\mathrm{inv}$-symmetric}, if $\mathrm{inv}(A^T) = A$ (we apply $\mathrm{inv}$ entry-wise here).

The following result regarding an extension of isotropic matroids is closely related to some results in \cite{BH/BicyclePenrose} in the context of \dmatroids.
\begin{theorem} \label{thm:IAS_quat}
Let $A$ be an $\mathrm{inv}$-symmetric $V \times V$-matrix over $GF(4)$. Then the matroid $M$ represented by the matrix
$$
\bordermatrix{
& \varphi_1(V) & \varphi_2(V) & \varphi_3(V)\cr
V & I & A & A+I
}
$$
shelters a tight $3$-matroid $Z$ over $(U,\Omega)$, where $\Omega = \{ \{(v,1),(v,2),(v,3)\} \mid v \in V \}$ and $U = \bigcup \Omega$.
\end{theorem}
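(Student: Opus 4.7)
My plan is to verify directly that the semi-multimatroid $Z$ canonically sheltered by $M$ (defined by $Z[T]=M[T]$ for each transversal $T$, which is consistent since restrictions of $M$ to overlapping transversals agree) is a multimatroid and is tight. By Condition~\ref{item:char_mm_nullity_restr} of Theorem~\ref{thm:char_mm} together with Condition~\ref{item:char_tight_nullity} of Proposition~\ref{prop:char_tight}, both conditions reduce to a single assertion: for every subtransversal $S$ with $|S|=|\Omega|-1$, exactly one of the three columns $e_v$, $A_v$, $A_v+e_v$ of the unique skew class $\omega_v$ disjoint from $S$ lies in the column span $W\subseteq GF(4)^V$ of $S$.

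I will work dually via the annihilator $W^\circ\subseteq GF(4)^V$ (viewing $\phi\in W^\circ$ as a row vector) and the linear map $\rho:W^\circ\to GF(4)^2$ defined by $\rho(\phi) := (\phi_v,(\phi A)_v)$. By double duality, $e_v\in W$ iff $\phi_v=0$ on $W^\circ$; $A_v\in W$ iff $(\phi A)_v=0$ on $W^\circ$; and $A_v+e_v\in W$ iff $\phi_v+(\phi A)_v=0$ on $W^\circ$. Hence the number of columns of $\omega_v$ in $W$ is $3$ precisely when $\rho\equiv 0$, and $1$ precisely when $\rho(W^\circ)$ equals exactly one of the three special lines $L_1:=\{0\}\times GF(4)$, $L_2:=GF(4)\times\{0\}$, $L_3:=\{(t,t):t\in GF(4)\}$; the goal is to show $\rho(W^\circ)$ is one of the $L_i$.

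The first half uses inv-symmetry to force total $B$-isotropy, for the nondegenerate form on $GF(4)^2$ given by $B((a,b),(a',b')) := b\,\mathrm{inv}(a') + a\,\mathrm{inv}(b')$. From $\mathrm{inv}(A_{w,u})=A_{u,w}$ one derives $\sum_w[(\phi A)_w\,\mathrm{inv}(\phi'_w) + \phi_w\,\mathrm{inv}((\phi'A)_w)] = 0$ for all $\phi,\phi'\in GF(4)^V$. For $\phi,\phi'\in W^\circ$, a case check on the index $i_w\in\{1,2,3\}$ with $(w,i_w)\in S$ shows that the $W^\circ$-constraints make every $w\neq v$ summand vanish, leaving $B(\rho(\phi),\rho(\phi'))=0$. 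A direct computation identifies the self-isotropic vectors of $B$ as those $(a,b)$ with $ab(a+b)=0$, which is exactly $L_1\cup L_2\cup L_3$; so the totally $B$-isotropic subspaces of $GF(4)^2$ are only $\{0\},L_1,L_2,L_3$, giving $\rho(W^\circ)\in\{\{0\},L_1,L_2,L_3\}$.

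The second half rules out $\rho\equiv 0$ (which would force both $e_v,A_v\in W$) by a hermitian-form contradiction. Assuming $e_v,A_v\in W$, pick $(x,y),(x',y')\in GF(4)^V\times GF(4)^V$ realizing them as combinations of columns of $S$, so $Ax+y=e_v$, $Ax'+y'=A_v$, all zero at coordinate $v$, and $(x_w,y_w),(x'_w,y'_w)$ lying on the line prescribed by $i_w$ for $w\neq v$. Setting $x'':=x'+e_v$ gives $Ax''=y'$ and $x''_v=1$. Evaluate $\langle x'',e_v\rangle := \sum_w x''_w\,\mathrm{inv}((e_v)_w)$ in two ways: directly it equals $x''_v=1$, while via $e_v=Ax+y$ and the adjoint identity $\langle c,Ad\rangle=\langle Ac,d\rangle$ (again from inv-symmetry) it becomes $\langle y',x\rangle+\langle x'',y\rangle$. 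The same $i_w$ case check reduces this second expression to $0$, producing the contradiction $1=0$. The main obstacle is the case analysis on $i_w\in\{1,2,3\}$ required in both halves: for $i_w\in\{1,2\}$ the relevant summands vanish individually because one coordinate of each constrained pair is forced to zero, whereas for $i_w=3$ two equal summands appear and cancel in characteristic $2$.
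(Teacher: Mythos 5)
Your proof is correct, and it takes a genuinely different route from the paper's. The paper's proof proceeds by reducing (after explicit row and column operations) to three small matrices $E_1,E_2,E_3$ and splitting into two cases according to whether a certain row $\rho$ lies in the row space of a block $B$; the key algebraic fact is that $\rho v\in\{0,1\}$ because $\mathrm{inv}(v^T)Bv$ is a $1\times1$ $\mathrm{inv}$-symmetric matrix. You instead pass to the annihilator $W^\circ$ of the span $W$ of the columns indexed by $S$, package the information in the evaluation map $\rho:W^\circ\to GF(4)^2$, and reduce the ``exactly one column of the remaining skew class lies in $W$'' condition to the statement that $\rho(W^\circ)$ is one of the three distinguished lines in $GF(4)^2$. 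The first half (total isotropy of $\rho(W^\circ)$ for the nondegenerate hermitian form $B$, plus the classification of totally isotropic subspaces of $GF(4)^2$) handles the multimatroid ``at most one'' direction, and the second half (a hermitian adjoint identity producing the contradiction $1=0$) handles the tightness ``at least one'' direction. Your approach is more conceptual and exposes the underlying geometry (isotropic lines for a hermitian form over $GF(4)$) rather than carrying out the rank bookkeeping by hand, at the cost of introducing the dual-space machinery; the paper's approach is more elementary and self-contained, mirroring the type of argument used elsewhere (e.g.\ Balister et al.) for the $GF(2)$ case. Both ultimately hinge on the same use of $\mathrm{inv}$-symmetry, and both hit the same computational core: a three-way case analysis on which element of each skew class lies in $S$, with the $i_w=3$ case handled by cancellation in characteristic $2$.
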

\begin{proof}
By the definition of $\mathrm{inv}$-symmetry, the diagonal entries of $A$ are either $0$ or $1$. By the reasoning of the paragraph below Proposition~\ref{prop:bin_tight_implies_IAS}, we (may) assume without loss of generality that $A$ is zero diagonal.

Let $S$ be a subtransversal of $\Omega$ with $|S| = |\Omega|-1$. Let $\omega \in \Omega$ such that $\omega \cap S = \emptyset$. We have $\omega = \{t_1,t_2,t_3\}$ with $t_i \in \varphi_i(V)$ for $i \in \{1,2,3\}$. By Condition~\ref{item:char_mm_nullity_restr} of Theorem~\ref{thm:char_mm} and by the definition of tightness, it suffices to show that the ranks of two of the matroids $M[S \cup \{t_1\}]$, $M[S \cup \{t_2\}]$, and $M[S \cup \{t_3\}]$ are equal and the remaining matroid has a rank that is one smaller.

The restrictions of the matrix to the columns of $S \cup \{t_1\}$, $S \cup \{t_2\}$, and $S \cup \{t_3\}$ are of the form
\begin{center}
$
\bordermatrix{
~ & S \cap \varphi_1(V) & S \setminus \varphi_1(V) & t_1 \cr
  & I   & * & 0 \cr
  & 0   & B & 0 \cr
  & 0   & \rho & 1
}$,
$
\bordermatrix{
~ & S \cap \varphi_1(V) & S \setminus \varphi_1(V) & t_2 \cr
  & I   & * & * \cr
  & 0   & B & \mathrm{inv}(\rho^T) \cr
  & 0   & \rho & 0
}$, and
$
\bordermatrix{
~ & S \cap \varphi_1(V) & S \setminus \varphi_1(V) & t_3 \cr
  & I   & * & * \cr
  & 0   & B & \mathrm{inv}(\rho^T) \cr
  & 0   & \rho & 1
}$
\end{center}
respectively, for some matrix $B$ and row $\rho$. By adding the column of $t_1$ in the first matrix to columns of $S \setminus \varphi_1(V)$ in an appropriate way, it is sufficient to show that the ranks of two of the matrices
\begin{center}
$
E_1 =
\bordermatrix{
~ & S \setminus \varphi_1(V) & t_1 \cr
  & B & 0 \cr
  & 0 & 1
}$,
$
E_2 =
\bordermatrix{
~ & S \setminus \varphi_1(V) & t_2 \cr
  & B & \mathrm{inv}(\rho^T) \cr
  & \rho & 0
}$, and
$
E_3 =
\bordermatrix{
~ & S \setminus \varphi_1(V) & t_3 \cr
  & B & \mathrm{inv}(\rho^T) \cr
  & \rho & 1
}$
\end{center}
are equal and the remaining matrix has a rank that is one smaller. This proof now follows a similar line of reasoning as \cite[Lemma~2]{DBLP:journals/ejc/BalisterBCP02}. Notice that $r(E_1) = r(B)+1$.

We distinguish two cases.

If $\rho$ is not in the row space of $B$, then, because of the $\mathrm{inv}$-symmetry of $A$, $\mathrm{inv}(\rho^T)$ is not in the column space of $B$. Thus the rank of
$
\begin{pmatrix}
B & \mathrm{inv}(\rho^T) \cr
\rho & *
\end{pmatrix}
$
is one larger than the rank of
$
\begin{pmatrix}
B \cr
\rho
\end{pmatrix}
$, which in turn is equal to $r(B)+1 = r(E_1)$.

If $\rho$ is in the row space of $B$, then because of the $\mathrm{inv}$-symmetry of $A$, $\mathrm{inv}(\rho^T)$ is in the column space of $B$. Let vector $v$ be such that $B v = \mathrm{inv}(\rho^T)$.

By appropriately adding columns of $S \setminus \varphi_1(V)$ to the column of $t_2$ in $E_2$ and to the column of $t_3$ in $E_3$, we obtain
$
E_2' =
\bordermatrix{
~ & S \setminus \varphi_1(V) & t_2 \cr
  & B & 0 \cr
  & \rho & \rho v
}$
and
$
E_3' =
\bordermatrix{
~ & S \setminus \varphi_1(V) & t_2 \cr
  & B & 0 \cr
  & \rho & 1+\rho v
}$, respectively. Doing the same for the rows we obtain
$
E_2'' =
\bordermatrix{
~ & S \setminus \varphi_1(V) & t_2 \cr
  & B & 0 \cr
  & 0 & \rho v
}$
and
$
E_3'' =
\bordermatrix{
~ & S \setminus \varphi_1(V) & t_2 \cr
  & B & 0 \cr
  & 0 & 1+\rho v
}$,
respectively. It suffices to show that $\rho v \in \{0,1\}$.

We notice that for an arbitrary $W \times V$-matrix $A_1$ and an $\mathrm{inv}$-symmetric $V \times V$-matrix $A_2$, $A_3 = \mathrm{inv}(A_1^T) A_2 A_1$ is $\mathrm{inv}$-symmetric. Indeed, $\mathrm{inv}(A_3^T) = \mathrm{inv}(A_1^T A_2^T \mathrm{inv}(A_1)) = A_3$. By choosing $A_1 := v$ and $A_2 := B$ we find that $\mathrm{inv}(v^T) B v = \mathrm{inv}(v^T) \mathrm{inv}(\rho^T) = \mathrm{inv}((\rho v)^T)$ is a one-by-one $\mathrm{inv}$-symmetric matrix. Thus $\rho v = \mathrm{inv}((\rho v)^T) \in \{0,1\}$.
\end{proof}

Similarly as before we call $M$ the \emph{isotropic matroid} of $A$ and we denote the tight $3$-matroid $Z$ of Theorem~\ref{thm:IAS_quat} by $\mathcal{Z}_A$.

Let us denote by $\mathscr{H}_{3,3}$ the tight $3$-matroid $\mathcal{Z}_A$, where
$$
A = \begin{pmatrix}
0 & 1 & a \\
1 & 0 & 1 \\
b & 1 & 0
\end{pmatrix}
$$
is an $\mathrm{inv}$-symmetric matrix over $GF(4) = \{0, 1, a, b\}$. We remark that the isotropic matroid of $A$ is isomorphic to $AG(2,3)$, the rank-$3$ affine geometry over $GF(3)$. We use the subscript ``$3,3$'' to signify that it is a $3$-matroid with $3$ skew classes. Notice that every circuit of $\mathscr{H}_{3,3}$ is a transversal. Also notice that $S_1$ ($S_3$, respectively) is isomorphic to $\mathscr{H}_{3,3}-X$ where $X$ is a circuit (basis, respectively) of $\mathscr{H}_{3,3}$. Consequently, by Lemma~\ref{lem:char_strongly_bin}, $\mathscr{H}_{3,3}$ is not binary.

%Since $U_{2,4}$ is not binary, we have that $\mathcal{Z}_{U_{2,4}}$ is not binary and so $\mathcal{Z}_{S_5}$ is not binary.

Since the matroid $U_{2,4}$ is quaternary, we have by Proposition~\ref{prop:quat_mat} that the tight $3$-matroid $\mathcal{Z}_{U_{2,4},3}$ is well defined. We easily verify that $\mathcal{Z}_{U_{2,4},3}$ is isomorphic to $\mathcal{Z}_{A'}$ where
$$
A' = \begin{pmatrix}
0 & 0 & a & b \\
0 & 0 & b & a \\
b & a & 0 & 0 \\
a & b & 0 & 0
\end{pmatrix}.
$$

Let $M_{A}$ and $M_{A'}$ be the isotropic matroids of the matrices $A$ and $A'$ above, respectively. One can straightforwardly verify that for any column/row index $v$ of $A'$, there is an isomorphism between $M_{A'}\slash\{(v,3)\}\delm\{(v,1),(v,2)\}$ and $M_{A}$ respecting the skew classes. Consequently, $\mathcal{Z}_{U_{2,4},3}|(v,3)$ is isomorphic to $\mathscr{H}_{3,3}$ for any element $v$ of $U_{2,4}$.

Next, we show that $S_2$ and $S_4$ are not tightly extendable. We remark that for the case of $S_2$ this was already shown in the context of \dmatroids in \cite{BH/PivotLoopCompl/09}. However, for convenience we provide a direct proof here.

The following lemma follows easily from the definition of a multimatroid.
\begin{lemma} \label{lem:nr_of_bases}
Let $Z$ be a nondegenerate multimatroid, $T \in \mathcal{T}(\Omega)$, and $\omega \in \Omega$. Then $T \cup \omega$ contains either $0$, $|\omega|-1$, or $|\omega|$ bases. If $Z$ moreover is tight, then $T \cup \omega$ contains either $0$ or $|\omega|-1$ bases.
\end{lemma}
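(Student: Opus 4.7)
My plan is to exploit the characterization of a multimatroid via its nullity function (Condition~\ref{item:char_mm_nullity} of Theorem~\ref{thm:char_mm}) together with the standard monotonicity of matroid nullity. I would first reduce to a local analysis at the skew class $\omega$ as follows. Set $S = T \setminus \omega$, so $S \in \mathcal{S}(\Omega)$ with $|S| = |\Omega|-1$ and $S \cap \omega = \emptyset$. Because $Z$ is nondegenerate, every basis of $Z$ is a transversal, so the bases of $Z$ contained in $T \cup \omega$ are precisely those transversals of $Z$ contained in $T \cup \omega$ which are independent. The transversals contained in $T \cup \omega$ are exactly the $|\omega|$ sets of the form $S \cup \{x\}$ with $x \in \omega$, and $S \cup \{x\}$ is a basis if and only if $n_Z(S \cup \{x\}) = 0$.

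Next I would apply Condition~\ref{item:char_mm_nullity} of Theorem~\ref{thm:char_mm} to $S$ and $\omega$: at most one element $x \in \omega$ satisfies $n_Z(S \cup \{x\}) \neq n_Z(S)$. Combined with the monotonicity $n_Z(S) \leq n_Z(S \cup \{x\})$, this forces a case split. If $n_Z(S) > 0$, then $n_Z(S \cup \{x\}) > 0$ for every $x \in \omega$, so $T \cup \omega$ contains $0$ bases. If $n_Z(S) = 0$, then either $n_Z(S \cup \{x\}) = 0$ for all $x \in \omega$ (giving $|\omega|$ bases), or exactly one $x' \in \omega$ has $n_Z(S \cup \{x'\}) > 0$ and the remaining $|\omega|-1$ elements give bases. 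This establishes the three possible values $0$, $|\omega|-1$, or $|\omega|$.

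For the tight case, I would invoke Condition~\ref{item:char_tight_nullity} of Proposition~\ref{prop:char_tight}, which, since $|S| = |\Omega|-1$ and $\omega$ is the unique skew class disjoint from $S$, guarantees the existence of a (unique, by the multimatroid axiom) $x' \in \omega$ with $n_Z(S \cup \{x'\}) = n_Z(S) + 1$, and $n_Z(S \cup \{y\}) = n_Z(S)$ for all $y \in \omega \setminus \{x'\}$. The case $n_Z(S) > 0$ again yields $0$ bases, and when $n_Z(S) = 0$ we get exactly $|\omega|-1$ bases; the possibility of $|\omega|$ bases is ruled out by tightness. This part is essentially routine; the only subtlety is making sure the hypotheses of Proposition~\ref{prop:char_tight}~\ref{item:char_tight_nullity} apply, which they do because $|S| = |\Omega|-1$ by construction. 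Overall there is no real obstacle here — the argument is a direct application of the two nullity characterizations already developed in Section~\ref{sec:mm}.
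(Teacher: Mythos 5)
Your proof is correct and takes essentially the same approach as the paper: both set $S = T\setminus\omega$, use nondegeneracy to identify bases with independent transversals $S\cup\{x\}$, and appeal to the nullity characterizations (Condition~\ref{item:char_mm_nullity} of Theorem~\ref{thm:char_mm} for the general case, Condition~\ref{item:char_tight_nullity} of Proposition~\ref{prop:char_tight} for tightness) together with monotonicity of nullity. The paper's version is slightly terser but the case split and the underlying reasoning are identical.
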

\begin{Proof}
Recall that since $Z$ is nondegenerate, all bases $B$ of $Z$ are transversals. Let $S = T \setminus \omega$.

Assume that $n_Z(S) > 0$. Then $n_Z(S \cup \{x\}) > 0$ for all $x \in \omega$, and so there is no basis $B$ of $Z$ with $B \subseteq S \cup \omega = T \cup \omega$.

Assume that $n_Z(S) = 0$. Then for all $x \in \omega$ except possibly one, we have $0 = n_Z(S) = n_Z(S \cup \{x\})$. Hence there are $|\omega|-1$ or $|\omega|$ bases $B$ of $Z$ with $B \subseteq T \cup \omega$. If $Z$ is tight, there is a $y \in \omega$ with $n_Z(S) < n_Z(S \cup \{y\})$, and so in this case $T \cup \omega$ contains precisely $|\omega|-1$ bases.
\end{Proof}

The above lemma leads to the following result.
\begin{theorem} \label{thm:odd_even_bases}
Let $Z$ be a tight $k$-matroid for some odd $k \geq 3$.

Let $X \subseteq U$ and let $Y$ be the union of some skew classes of $\Omega$. Let $b_1$ and $b_2$ be the number of bases of $Z$ that are bases of $Z[X]$ and $Z[X \sdif Y]$, respectively. Then $b_1$ and $b_2$ have equal parity.
\end{theorem}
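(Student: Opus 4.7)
The plan is to introduce $b(X) := |\{B \in \mathcal{B}(Z) : B \subseteq X\}|$ and reduce the theorem to showing $b(X) \equiv b(X \sdif Y) \pmod{2}$. Since $Z$ is nondegenerate (because $|\omega| = k \geq 3$), every basis of $Z$ is a transversal of $\Omega$, and a basis $B$ of $Z$ is also a basis of $Z[X]$ exactly when $B \subseteq X$: independence of $B$ is inherited from $Z$, and maximality in $Z[X]$ follows because once $B$ meets every skew class of $Z[X]$ there is no room to extend without violating the subtransversal property of $Z[X]$.

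Next, I would reduce to the case where $Y = \omega$ is a single skew class, since the general case follows by iterating the symmetric difference one class at a time. In this base case, set $S := X \setminus \omega$, which coincides with $(X \sdif \omega) \setminus \omega$, and for each transversal $T$ of $\Omega \setminus \{\omega\}$ with $T \subseteq S$ define
\begin{align*}
N(T) := |\{ x \in \omega : T \cup \{x\} \in \mathcal{B}(Z)\}|.
\end{align*}
Grouping the bases contained in $X$ (resp.\ in $X \sdif \omega$) by their projection $B \setminus \omega$, and using the partition $\omega = (\omega \cap X) \sqcup (\omega \cap (X \sdif \omega))$, one obtains
\begin{align*}
b(X) + b(X \sdif \omega) = \sum_T N(T),
\end{align*}
with $T$ ranging over the transversals of $\Omega \setminus \{\omega\}$ contained in $S$.

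The crux is then Lemma~\ref{lem:nr_of_bases}: since $Z$ is tight and nondegenerate, $N(T) \in \{0, |\omega|-1\} = \{0, k-1\}$ for every $T$. Because $k$ is odd, $k-1$ is even, so every $N(T)$ is even and hence $b(X) + b(X \sdif \omega)$ is even, yielding $b(X) \equiv b(X \sdif \omega) \pmod{2}$. Iterating this congruence across each skew class comprising $Y$ gives the theorem.

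No substantive obstacle is anticipated; the statement is really a parity assertion that falls out once one sets up the right bookkeeping and applies Lemma~\ref{lem:nr_of_bases}. The only delicate point is to check that the identity for $b(X) + b(X \sdif \omega)$ holds uniformly, regardless of whether $\omega \subseteq X$, $\omega \cap X = \emptyset$, or $0 < |\omega \cap X| < k$; in the two extreme cases one of the summands on the left vanishes while the right-hand side remains an even sum of $N(T)$'s, so the same conclusion is reached without a case split.
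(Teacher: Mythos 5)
Your proof is correct and follows essentially the same route as the paper: reduce to $Y$ a single skew class $\omega$, group bases by their trace $B\setminus\omega$ on the remaining $|\Omega|-1$ classes, and observe via Lemma~\ref{lem:nr_of_bases} that each group contributes $0$ or $k-1$ bases, an even number since $k$ is odd. The only difference is cosmetic: you make explicit the equivalence between ``$B$ is a basis of $Z$ and of $Z[X]$'' and ``$B\in\mathcal{B}(Z)$ with $B\subseteq X$,'' and you name the per-group count $N(T)$, which the paper leaves implicit.
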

\begin{Proof}
It suffices to show this result for the case where $Y = \omega \in \Omega$ since the general case follows by iteration.

To show that $b_1$ and $b_2$ have equal parity, we show that $b_1 + b_2$ is even. Recall that since $Z$ is nondegenerate, all bases of $Z$ are transversals. We have $b_1+b_2 = |\{B \in \mathcal{B}(Z) \mid B \setminus \omega \subseteq X \setminus \omega \}|$.

Let $H$ be the set of subtransversals $S$ of $Z$ with $|S| = |\Omega|-1$ and $S \subseteq X \setminus \omega$. Then $b_1+b_2 = \sum_{S \in H} |\{B \in \mathcal{B}(Z) \mid B \setminus \omega = S \}|$.

Since $Z$ is tight and $|\omega|-1$ is even, we have by Lemma~\ref{lem:nr_of_bases} that for each $S \in H$ the number of bases of $Z$ that are a subset of $S \cup \omega$ is even. Thus for each $S \in H$, $|\{B \in \mathcal{B}(Z) \mid B \setminus \omega = S \}|$ is even, and therefore $b_1 + b_2$ is even.
\end{Proof}

Recall that a basis of $Z[X]$ or $Z[X \sdif Y]$ may not be a basis of $Z$. Hence the parities of the number of bases of $Z[X]$ and $Z[X \sdif Y]$ may not necessarily coincide.

Theorem~\ref{thm:odd_even_bases} has a number of interesting special cases. For example, $T \in \mathcal{T}(\Omega)$ is a basis of $Z$ if and only if $Z[T \sdif Y]$ has an odd number of bases if and only if $Z-T$ has an odd number of bases (which is the case where $Y$ is the union of all skew classes). Also, if $Z$ is not the empty multimatroid, then $Z$ has an even number of bases (which is the case where $X = U$ and $Y$ is a nonempty union of skew classes).

Since for any multimatroid $Z$, $Q_1(Z;0)$ is the number of bases of $Z$, we have the following corollary to Theorem~\ref{thm:odd_even_bases}.
\begin{corollary}
Let $Z$ be a tight $k$-matroid for some odd $k \geq 3$. Then $Q_1(Z;0)$ is even when $Z$ is nonempty. Moreover, for any transversal $T$ of $Z$, $Q(Z-T;0)$ is odd if and only if $T$ is a basis of $Z$.
\end{corollary}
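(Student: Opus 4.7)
The plan is to interpret $Q_1(Z;0)$ as the number of bases of $Z$ and then invoke Theorem~\ref{thm:odd_even_bases} for a suitable choice of $X$ and $Y$. Since $Q_1(Z;y) = \sum_{T \in \mathcal{T}(\Omega)} y^{n_Z(T)}$, substituting $y=0$ selects exactly the transversals with $n_Z(T)=0$, i.e., the independent transversals. Because $Z$ is nondegenerate (all skew classes have size $k \geq 3$), the recalled fact $\mathcal{B}(Z) = \mathcal{I}(Z) \cap \mathcal{T}(\Omega)$ gives $Q_1(Z;0) = |\mathcal{B}(Z)|$. For the second assertion, an analogous identification $Q_1(Z-T;0) = |\mathcal{B}(Z-T)|$ applies once one notes that $Z-T$ is also nondegenerate (its skew classes have size $k-1 \geq 2$); moreover, since independence is inherited from $Z$ and bases of $Z-T$ are transversals of $Z-T$ hence transversals of $Z$ disjoint from $T$, one obtains that $\mathcal{B}(Z-T)$ equals the set of bases of $Z$ contained in $U \setminus T$, interpreted as bases of $Z[U\setminus T]$.

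For the first assertion, take $X = U$ and $Y = \omega$ for any chosen $\omega \in \Omega$, which exists because $Z$ is nonempty. Then $Z[X] = Z$, so $b_1 = |\mathcal{B}(Z)|$, while $X \sdif Y = U \setminus \omega$; no basis of $Z$ lies in $U \setminus \omega$, since every basis is a transversal and hence meets $\omega$, giving $b_2 = 0$. Theorem~\ref{thm:odd_even_bases} then yields $b_1 \equiv b_2 \equiv 0 \pmod 2$, proving that $Q_1(Z;0)$ is even.

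For the second assertion, take $X = U \setminus T$ and $Y = U$, which is a union of all skew classes of $\Omega$. By the identification above, $b_1 = |\mathcal{B}(Z-T)| = Q_1(Z-T;0)$. On the other hand $X \sdif Y = T$, so $b_2$ counts the bases of $Z$ that are bases of the matroid $Z[T]$. Any such basis is contained in $T$ and has size $|\Omega| = |T|$, so it must equal $T$ itself; conversely $T$ is a basis of $Z[T]$ precisely when it is independent in $Z$. Hence $b_2 = 1$ if $T \in \mathcal{B}(Z)$ and $b_2 = 0$ otherwise. Applying Theorem~\ref{thm:odd_even_bases} gives $b_1 \equiv b_2 \pmod 2$, i.e., $Q_1(Z-T;0)$ is odd if and only if $T$ is a basis of $Z$.

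The only step requiring a moment of care is identifying the bases of the restriction/deletion with bases of $Z$ sitting inside the relevant subset (for both $Z-T = Z[U\setminus T]$ and $Z[T]$); this reduces to cardinality counting together with the nondegeneracy of $Z-T$ and the fact $\mathcal{C}(Z[X]) = \mathcal{C}(Z)\cap 2^X$, and poses no real obstacle.
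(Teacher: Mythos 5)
Your proof is correct and follows essentially the same route as the paper: identify $Q_1(\cdot\,;0)$ with the number of bases (using that $Z$ and $Z-T$ are nondegenerate multimatroids, so bases are precisely the independent transversals), then apply Theorem~\ref{thm:odd_even_bases}. Your choices $X=U$, $Y=\omega$ for the first claim and $X=U\setminus T$, $Y=U$ for the second match the paper's (which takes $X=T$ and $Y$ the union of all skew classes for the second, just swapping the roles of $X$ and $X\sdif Y$), and the careful observation that bases of $Z-T$ coincide with bases of $Z$ contained in $U\setminus T$, while bases of $Z$ contained in $T$ force equality with $T$, is exactly the reduction needed.
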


We remark that Theorem~\ref{thm:odd_even_bases} is closely related to the definitions of loop complementation and dual pivot for vf-safe \dmatroids from \cite{BH/PivotLoopCompl/09}. Indeed, these operations essentially provide a way to obtain $Z-T'$ for some transversal $T'$ of a tight $3$-matroid $Z$ given $Z-T$ for some transversal $T$ of $Z$, see \cite{BH/InterlacePolyDM/14}.

We now recall a basis-exchange property of multimatroids.
\begin{proposition}[Proposition~5.8 of \cite{DBLP:journals/siamdm/Bouchet97}] \label{prop:basis_exch}
Let $Z$ be a nondegenerate multimatroid. For all $T, T' \in \mathcal{B}(Z)$ and $p \subseteq T \sdif T'$ a skew pair, there is a skew pair $q \subseteq T \sdif T'$ (we allow $q=p$) such that $T'\sdif(p\cup q) \in \mathcal{B}(Z)$.
\end{proposition}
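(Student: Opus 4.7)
My plan is to reduce the problem to a direct application of the nullity characterization of multimatroids (Condition~\ref{item:char_mm_nullity} of Theorem~\ref{thm:char_mm}), exploiting that every basis of a nondegenerate multimatroid is a transversal. I would first set up notation: let $\omega$ be the skew class containing $p$, and write $p = \{t, t'\}$ with $t \in T \cap \omega$ and $t' \in T' \cap \omega$ (forced by $p \subseteq T \sdif T'$ since $T, T'$ are transversals). Set $S := T' \setminus \{t'\}$ so that $T' \sdif p = S \cup \{t\}$. Since $S$ is independent in $Z$, we have $n_Z(S) = 0$, and since $|S \cup \{t\}| = |S| + 1$, necessarily $n_Z(S \cup \{t\}) \in \{0,1\}$.

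The argument then splits into two cases. If $n_Z(S \cup \{t\}) = 0$, then $T' \sdif p$ is already an independent transversal, hence a basis, and $q := p$ works. Otherwise $n_Z(S \cup \{t\}) = 1$, so the matroid $Z[S \cup \{t\}]$ has a unique circuit $C$; since $S$ is independent, $t \in C$ and $C \setminus \{t\} \subseteq S$. I would then claim $C \not\subseteq T$: otherwise $T$ would contain $C$, contradicting $n_Z(T) = 0$. Hence some $s \in C \setminus \{t\}$ lies outside $T$. Letting $\omega'$ denote the skew class of $s$ and $s^*$ the unique element of $T \cap \omega'$, we have $s \neq s^*$, and (since $s \in S$ implies $\omega' \neq \omega$) the candidate skew pair $q := \{s, s^*\}$ lies in $T \sdif T'$ and is distinct from $p$.

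It remains to verify that $T' \sdif (p \cup q) = (S \setminus \{s\}) \cup \{t, s^*\}$ is a basis. Writing $T_0 := S \setminus \{s\}$, one has $T_0 \cup \{t\}$ independent because it is $S \cup \{t\}$ with the unique circuit-element $s$ removed, so $n_Z(T_0 \cup \{t\}) = 0$. Now I would apply Condition~\ref{item:char_mm_nullity} of Theorem~\ref{thm:char_mm} to the subtransversal $T_0 \cup \{t\}$ and the disjoint skew class $\omega'$: since $s \in \omega'$ already satisfies $n_Z((T_0 \cup \{t\}) \cup \{s\}) = n_Z(S \cup \{t\}) = 1 \neq 0 = n_Z(T_0 \cup \{t\})$, at most one element of $\omega'$ can have this property, so every other $x \in \omega'$, in particular $s^*$, satisfies $n_Z(T_0 \cup \{t, x\}) = 0$. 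Hence $T' \sdif (p \cup q)$ is an independent transversal, i.e.\ a basis. The main obstacle is the selection of $s$: one must ensure the chosen circuit-element lies in $T' \setminus T$ so that it actually belongs to a skew pair in $T \sdif T'$, and the argument $C \not\subseteq T$ (which uses crucially that $T$ itself is a basis) is what allows the multimatroid nullity axiom to yield the exchange in the second skew class.
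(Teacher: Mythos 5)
Your proof is correct. Since the paper cites this result as Proposition~5.8 of Bouchet's 1997 multimatroid paper without reproducing the argument, there is no internal proof to compare against; but your derivation is a clean self-contained argument using only facts the present paper makes available: that $\mathcal{B}(Z)=\mathcal{I}(Z)\cap\mathcal{T}(\Omega)$ for nondegenerate multimatroids, the nullity characterization in Condition~\ref{item:char_mm_nullity} of Theorem~\ref{thm:char_mm}, and standard matroid facts (unique fundamental circuit of an independent set plus one element). The key step --- showing $C\not\subseteq T$ because $T$ is independent, hence extracting $s\in C\cap(T'\setminus T)$ and exchanging it with $s^*\in T\cap\omega'$ via the multimatroid nullity axiom --- is exactly the right use of the hypothesis that $T$ is a basis, and all the small verifications (that $T_0\cup\{t,s^*\}$ is a transversal, that $q\subseteq T\sdif T'$, that $T_0\cup\{t\}$ is independent) check out.
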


\begin{lemma} \label{lem:no_tight_ext_S2_S4}
The 2-matroids $S_2$ and $S_4$ are not tightly extendable.
\end{lemma}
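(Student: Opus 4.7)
The plan is to argue by contradiction in each case. Suppose $Z$ is a tight $3$-matroid with $Z - T_0 = S_i$ for some transversal $T_0$, where $i \in \{2,4\}$. Label the elements so that each skew class of $Z$ is $\omega_j = \{j_a, j_b, j_c\}$ and $T_0 = \bigcup_j \{j_c\}$. I will use that each of $S_2$ and $S_4$ has exactly seven bases --- for $S_2$, all transversals of $\Omega_3$ except the unique circuit $\{1_a,2_a,3_a\}$; for $S_4$, the six transversals of $\Omega_4$ with exactly two $a$-indices together with $\{1_b,2_b,3_b,4_b\}$. Since $7$ is odd, Theorem~\ref{thm:odd_even_bases} applied with $X$ the ground set of $S_i$ and $Y$ the union of all skew classes of $Z$ (so that $X \sdif Y = T_0$) forces $T_0$ to be a basis of $Z$; in particular no element of $T_0$ is singular and no subset of $T_0$ is a circuit of $Z$.

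For $S_2$, I will first observe that $S_2$ itself fails to be tight: for each $\omega_i$ there are three $2$-element subtransversals $S \subseteq U_3$ disjoint from $\omega_i$ that do not complete, via an element of $\omega_i \cap U_3$, to a set containing the unique circuit $\{1_a,2_a,3_a\}$ of $S_2$; for $\omega_3$ these are $\{1_a,2_b\}, \{1_b,2_a\}, \{1_b,2_b\}$. Tightness of $Z$ therefore forces $i_c$ to be the unique element of $\omega_i$ completing each such $S$ to contain a new circuit, so each $S \cup \{i_c\}$ must contain a circuit of $Z$ with $i_c$ as an element --- necessarily a $2$-element subset containing $i_c$ or the full $3$-set. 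A pairwise case analysis of the three forced circuits associated with $\omega_3$ against the multimatroid axiom (no two circuits have union containing exactly one skew pair) eliminates every configuration except the one in which the circuit inside $\{1_a,2_b,3_c\}$ is $\{2_b,3_c\}$ and the one inside $\{1_b,2_a,3_c\}$ is $\{1_b,3_c\}$. Running the analogous analysis at $\omega_1$, which is cyclically symmetric to $\omega_3$ since $\{1_a,2_a,3_a\}$ is cyclically symmetric, will force $\{1_c, 3_b\}$ to be a circuit of $Z$. But then $\{2_b,3_c\} \cup \{1_c,3_b\} = \{1_c, 2_b, 3_b, 3_c\}$ contains exactly one skew pair $\{3_b,3_c\}$, contradicting the multimatroid axiom.

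For $S_4$, the argument is analogous in spirit but organized differently, because $S_4$ is itself tight as a $2$-matroid (which one verifies by enumerating the $3$-element subtransversals of $S_4$); hence tightness of $Z$ is automatic on $3$-subtransversals contained in $U_4$. The essential constraints will instead come from $3$-subtransversals of $Z$ meeting $T_0$: for example, with $S = T_0 \setminus \{j_c\}$ and missing class $\omega_j$, the fact that $T_0$ is a basis implies $j_c$ is not bad, and tightness forces exactly one of $j_a, j_b$ to be the bad element, yielding a new circuit of $Z$ contained in $S \cup \{j_a\}$ or $S \cup \{j_b\}$ that meets $T_0$. Similarly, subtransversals such as $\{1_c,2_c,3_a\}$ (and their analogues under the index symmetries of $S_4$) produce further forced circuits. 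The same kind of pairwise multimatroid-axiom case analysis as for $S_2$ then exhibits two forced circuits whose union contains exactly one skew pair, yielding the desired contradiction.

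The main obstacle is to organize the case analysis cleanly, particularly for $S_4$, whose larger ground set and richer circuit family create many more configurations to check; the structural idea --- that tightness of $Z$ at certain ``unresolved'' subtransversals forces small new circuits of $Z$ involving elements of $T_0$, and that two such forced circuits eventually clash with the multimatroid axiom --- is uniform across both cases.
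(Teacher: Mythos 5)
Your opening step matches the paper's: count seven bases of $S_2$ (resp.\ $S_4$) and invoke Theorem~\ref{thm:odd_even_bases} to conclude that $T_0 = \{1_c,\ldots,\ell_c\}$ is a basis of any tight extension $Z$. After that you diverge, and this is where the trouble starts.

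For $S_2$ your plan is essentially sound: $S_2$ is not tight, so each of the three $2$-subtransversals disjoint from $\omega_3$ that fail to extend to the circuit inside $U_3$ forces a small circuit of $Z$ through $3_c$; the pairwise multimatroid-axiom analysis together with the known circuit $\{1_a,2_a,3_a\}$ does pin down $\{2_b,3_c\}$ and $\{1_b,3_c\}$ as circuits (I checked this: the alternative $(\{1_a,2_b,3_c\},\{1_b,2_a,3_c\})$ survives the first round but forces $C_3=\{2_b,3_c\}$, which then clashes with $\{1_b,2_a,3_c\}$); the cyclic symmetry of $\{1_a,2_a,3_a\}$ then forces $\{1_c,3_b\}$, and $\{2_b,3_c\}\cup\{1_c,3_b\}$ contains exactly one skew pair. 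So that case could be completed, though it requires more enumeration than you have written down.

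For $S_4$, however, there is a genuine gap. You correctly observe that $S_4$ is tight, so the trick that drove the $S_2$ case (non-tightness inside $U_3$ forcing circuits through $T_0$) is unavailable, and you acknowledge you do not yet have a clean organization for the case analysis. What is written --- ``produce further forced circuits,'' ``the same kind of pairwise multimatroid-axiom case analysis\ldots then exhibits two forced circuits whose union contains exactly one skew pair'' --- is an expectation, not an argument. Given the eight circuits of $S_4$ on twelve ground elements of $Z$, the space of configurations is large, and no contradiction has actually been exhibited.

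The paper avoids all of this by never leaving the parity world. It applies Theorem~\ref{thm:odd_even_bases} three times rather than once: seven bases (odd) gives $T_0\in\mathcal{B}(Z)$ and also $T'=\{1_b,\ldots,\ell_b\}\in\mathcal{B}(Z)$; exactly four bases of $S_x$ contain a fixed $j_b$ (even), so $T_0\sdif\{j_b,j_c\}\notin\mathcal{B}(Z)$; exactly two bases contain both $j_b$ and $k_b$ (even), so $T_0\sdif\{j_b,j_c\}\sdif\{k_b,k_c\}\notin\mathcal{B}(Z)$. This says that no exchange along a skew pair of $T_0\sdif T'$ can restore a basis, directly contradicting the basis-exchange property (Proposition~\ref{prop:basis_exch}) applied to the bases $T_0$ and $T'$. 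The computation is identical for $S_2$ and $S_4$ (the counts $7$, $4$, $2$ happen to coincide), so the two cases are handled in one stroke with no circuit enumeration at all. You already have Theorem~\ref{thm:odd_even_bases} in hand; the missing idea in your approach is to pair it with Proposition~\ref{prop:basis_exch} instead of falling back to the raw multimatroid axiom.
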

\begin{Proof}
The proofs for $S_2$ and $S_4$ are so similar that we can treat them together. Let $x \in \{2,4\}$. Assume that $Z = (U,\Omega,\mathcal{C})$ is a tight extension of $S_x$ with $\Omega = \{ \{i_a, i_b, i_c\} \mid i \in \{1, \ldots, \ell\} \}$ and $U = \bigcup \Omega$, where $\ell = 3$ when $x=2$ and $\ell = 4$ when $x=4$. Note that all transversals of $S_2$ are bases except for $\{1_a,2_a,3_a\}$. Also note that the bases of $S_4$ are all transversals of $S_4$ with zero or two elements from $\{1_a,2_a,3_a,4_a\}$. Thus $S_x$ has seven bases, and so by Theorem~\ref{thm:odd_even_bases}, $T = \{1_c,\ldots,\ell_c\}$ is a basis of $Z$. Also, $T' = \{1_b,\ldots,\ell_b\}$ is a basis of $Z$. Let $p \subseteq T \sdif T'$ be a skew pair. Thus $p = \{j_b,j_c\}$ for some $j \in \{1, \ldots, \ell\}$. Note that there are exactly four bases $B$ of $S_x$ with $j_b \in B$. Thus by Theorem~\ref{thm:odd_even_bases} with $X:= \{i_a,i_b \mid i \in \{1, \ldots, \ell\}\} \setminus \{j_a\}$ and $Y:= U \setminus \{j_a,j_b,j_c\}$, we obtain that $T \sdif p$ is not a basis of $Z$. Let $q = \{k_b,k_c\} \subseteq T \sdif T'$ be a skew pair distinct from $p$. Note there are exactly two bases $B$ of $S_x$ with $j_b, k_b \in B$. Thus by Theorem~\ref{thm:odd_even_bases} with $X:= \{i_a,i_b \mid i \in \{1, \ldots, \ell\}\} \setminus \{j_a,k_a\}$ and $Y:= U \setminus \{j_a,j_b,j_c,k_a,k_b,k_c\}$, we have that $T \sdif p \sdif q$ is not a basis of $Z$. This contradicts the basis-exchange property of multimatroids, Proposition~\ref{prop:basis_exch}, for bases $T$ and $T'$.
\end{Proof}

Note that the class of 2-matroids that are tightly extendable is minor-closed. We observe that $S_2$ and $S_4$ are excluded minors for this class (because any proper minor of $S_2$ or $S_4$ is strongly binary and therefore tightly extendable). We remark that three other excluded minors for this class are $\mathcal{Z}_{M}$ where $M$ is one of the following matroids: $U_{2,6}$, $P_6$ or $F_7^-$, see \cite{BH/NullityLoopcGraphsDM/10}.

\begin{theorem} \label{thm:binary_tight_3m_minor_char}
Let $Z$ be a tight $3$-matroid. Then $Z$ is binary if and only if no minor of $Z$ is isomorphic to $\mathscr{H}_{3,3}$.
\end{theorem}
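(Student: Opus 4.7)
The forward direction is immediate: representability is preserved under minors, and the excerpt has already observed (via Lemma~\ref{lem:char_strongly_bin} together with the exhibition of $S_1$ and $S_3$ as $\mathscr{H}_{3,3}-X$ for appropriate $X$) that $\mathscr{H}_{3,3}$ is not binary. Hence if $Z$ is binary, no minor of $Z$ can be isomorphic to $\mathscr{H}_{3,3}$.

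For the converse, assume $Z$ is a tight $3$-matroid with no minor isomorphic to $\mathscr{H}_{3,3}$. By Lemma~\ref{lem:char_strongly_bin} it suffices to find a transversal $T$ of $Z$ such that $Z-T$ is strongly binary; in fact, the plan is to show this for every transversal $T$. By Proposition~\ref{prop:bouchet_excl_minor}, strong binarity of $Z-T$ is equivalent to the absence of a minor isomorphic to any $S_i$, $i\in\{1,\ldots,5\}$; so I assume for contradiction that $(Z-T)|X\cong S_i$ for some subtransversal $X$ of $Z-T$ and derive a contradiction in each case.

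The key structural observation is the commutation $(Z-T)|X=(Z|X)-T_0$, where $T_0:=T\cap\bigcup\{\omega\in\Omega:\omega\cap X=\emptyset\}$ is a transversal of the tight $3$-matroid $Z|X$ (using the identity $(Z-U'')|X=(Z|X)-U''$ valid for $U''$ disjoint from $X$, and the fact that tightness is preserved under minors). Thus $Z|X$ is a tight $3$-matroid extension of $(Z-T)|X\cong S_i$, so $S_i$ is tightly extendable; by Proposition~\ref{prop:unique_tight_k+1_mm} the tight extension is unique up to isomorphism. The case analysis now uses only results already in the excerpt. For $i\in\{2,4\}$, Lemma~\ref{lem:no_tight_ext_S2_S4} says $S_i$ is not tightly extendable, a contradiction. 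For $i\in\{1,3\}$, the excerpt exhibits $S_i\cong\mathscr{H}_{3,3}-X'$ ($X'$ a circuit for $i=1$, a basis for $i=3$), so by uniqueness the tight extension of $S_i$ is $\mathscr{H}_{3,3}$; hence $Z|X\cong\mathscr{H}_{3,3}$ is a minor of $Z$, contradicting the hypothesis. For $i=5$, Proposition~\ref{prop:quat_mat} applied to $U_{2,4}$ produces the tight $3$-matroid extension $\mathcal{Z}_{U_{2,4},3}$ of $S_5=\mathcal{Z}_{U_{2,4}}$, so $Z|X\cong\mathcal{Z}_{U_{2,4},3}$; the excerpt further notes that $\mathcal{Z}_{U_{2,4},3}|(v,3)\cong\mathscr{H}_{3,3}$ for any $v\in U_{2,4}$, so a further minor of $Z|X$ by the image of $(v,3)$ exhibits $\mathscr{H}_{3,3}$ as a minor of $Z$ — again a contradiction.

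The main obstacle is making precise the passage from the abstract isomorphism $(Z-T)|X\cong S_i$ to an identification $Z|X\cong$ (canonical tight extension of $S_i$). This requires lifting the isomorphism of $2$-matroids to a bijection between the ambient $(\ell,3)$-carriers, pulling back the given tight extension along this lift, and invoking Proposition~\ref{prop:unique_tight_k+1_mm} to conclude equality with $\mathscr{H}_{3,3}$ or $\mathcal{Z}_{U_{2,4},3}$ as appropriate. Once this identification is in place, the rest of the argument is bookkeeping against results already proved earlier in the paper.
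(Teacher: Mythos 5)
Your proposal is correct and follows essentially the same route as the paper: reduce to strong binarity of $Z-T$ via Lemma~\ref{lem:char_strongly_bin}, apply Proposition~\ref{prop:bouchet_excl_minor}, use the commutation $(Z-T)|X=(Z|X)-T'$ to view $Z|X$ as a tight extension of the offending $S_i$, and dispose of each case exactly as you do (non-extendability for $S_2,S_4$; uniqueness of tight extension via Proposition~\ref{prop:unique_tight_k+1_mm} forcing $Z|X\cong\mathscr{H}_{3,3}$ for $S_1,S_3$; $Z|X\cong\mathcal{Z}_{U_{2,4},3}$ with its $\mathscr{H}_{3,3}$ minor for $S_5$). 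Your closing remark about lifting the $2$-matroid isomorphism to the ambient $(\ell,3)$-carriers before invoking Proposition~\ref{prop:unique_tight_k+1_mm} is a legitimate and correctly resolved technical point that the paper leaves implicit.
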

\begin{Proof}
First assume that $Z$ is binary. Since $\mathscr{H}_{3,3}$ is not binary, no minor of $Z$ is isomorphic to $\mathscr{H}_{3,3}$.

Now assume that no minor of $Z$ is isomorphic to $\mathscr{H}_{3,3}$. Let $T \in \mathcal{T}(\Omega)$. By Lemma~\ref{lem:char_strongly_bin} it suffices to show that $Z-T$ is strongly binary. Assume to the contrary that $Z-T$ is not strongly binary. By Proposition~\ref{prop:bouchet_excl_minor}, it has a minor isomorphic to some $S_i$. Let $X$ be a subtransversal of $Z-T$ such that $(Z-T)|X$ is isomorphic to some $S_i$. Note that $(Z-T)|X = (Z|X)-T'$, where $T' \subseteq T$ is a transversal of $Z|X$.

Assume that $(Z|X)-T'$ is isomorphic to $S_1$ or $S_3$. Since $S_1$ ($S_3$, respectively) is isomorphic to $\mathscr{H}_{3,3}-Y$, where $Y$ is a circuit (basis, respectively) of $\mathscr{H}_{3,3}$, we have by Proposition~\ref{prop:unique_tight_k+1_mm} and the fact that $Z|X$ is tight, that $Z|X$ is isomorphic to $\mathscr{H}_{3,3}$ --- a contradiction.

Assume that $(Z|X)-T'$ is isomorphic to $S_5$. Since $S_5$ is isomorphic to $\mathcal{Z}_{U_{2,4}}$, $Z|X$ is isomorphic to $\mathcal{Z}_{U_{2,4},3}$. Recall from the paragraph above Lemma~\ref{lem:nr_of_bases} that $\mathcal{Z}_{U_{2,4},3}$ has a minor isomorphic to $\mathscr{H}_{3,3}$. Hence $Z|X$ has a minor isomorphic to $\mathscr{H}_{3,3}$ --- a contradiction.

Assume that $(Z|X)-T'$ is isomorphic to $S_2$ or $S_4$. Then $Z|X$ is a tight extension of $(Z|X)-T'$ --- a contradiction with Lemma~\ref{lem:no_tight_ext_S2_S4}.

Since this exhausts all possible $S_i$'s, we obtain a contradiction.
\end{Proof}
In other words, Theorem~\ref{thm:binary_tight_3m_minor_char} says that the minor-closed set of tight $3$-matroids that do not have (an isomorphic copy of) $\mathscr{H}_{3,3}$ as a minor is equal to the set of binary tight $3$-matroids.

We now observe that no larger minor-closed class of $3$-matroids $Z$ than the class of binary tight $3$-matroids exists that satisfies the crucial property of Theorem~\ref{thm:bin_one_ort} that $\ortE_B \neq \emptyset$ for all bases $B$ of $Z$. Indeed, by Lemma~\ref{lem:implies_tight} $Z$ must be tight. Moreover, $Z$ must be binary since this property does not hold for the (up to isomorphism) unique excluded minor $\mathscr{H}_{3,3}$. In fact, we have $\ort(\mathscr{H}_{3,3}) = \emptyset$.
Indeed, assume to the contrary that there is a $T \in \ort(\mathscr{H}_{3,3})$. Since $\mathscr{H}_{3,3}-T$ is tight and each circuit is transversal, $\mathscr{H}_{3,3}-T$ has $4$ circuits. However, recall that for any transversal $X$ of $\mathscr{H}_{3,3}$, $\mathscr{H}_{3,3}-X$ is isomorphic to $S_1$ or $S_3$. Thus, $\mathscr{H}_{3,3}-X$ has at most $3$ circuits --- a contradiction.

Also, a straightforward weaker notion of representability of multimatroids is the following: we say that a multimatroid $Z$ is \emph{weakly representable} over some field $\mathbb{F}$ if and only if for all transversals $T$ of $Z$, the matroid $Z[T]$ is representable over $\mathbb{F}$. By definition, a multimatroid representable over $\mathbb{F}$ is weakly representable over $\mathbb{F}$. Let us call a multimatroid \emph{weakly binary} if it is weakly representable over $GF(2)$. Note that the important property given in Lemma~\ref{lem:sum_cycles} does not hold for the weakly-binary tight $3$-matroid $\mathscr{H}_{3,3}$ (indeed, there are $C,C' \in \mathcal{C}(\mathscr{H}_{3,3})$ with $|C+C'|=2$, while all circuits of $\mathscr{H}_{3,3}$ are transversals). Thus the notion of representability of multimatroids (involving shelterings by matroids) used in this paper seems to capture just the right level of generality.

We now extend Theorem~\ref{thm:binary_tight_3m_minor_char} by two additional statements.
\begin{theorem} \label{thm:chars_bintight3m}
Let $Z$ be a tight $3$-matroid. Then the following statements are equivalent.
\begin{enumerate}
\item \label{stat:bin_3m} $Z$ is binary,

\item \label{stat:tot_isotropic} for all circuits $C_1$ and $C_2$ of $Z$, $C_1 \cup C_2$ contains an even number of skew pairs,

\item \label{stat:seymour} for all circuits $C_1$ and $C_2$ of $Z$, $C_1 \cup C_2$ does not contain precisely three skew pairs.

\item \label{stat:no_H33_minor} no minor of $Z$ is isomorphic to $\mathscr{H}_{3,3}$.
\end{enumerate}
\end{theorem}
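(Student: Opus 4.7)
The plan is to close the cycle of implications $(1) \Rightarrow (2) \Rightarrow (3) \Rightarrow (4) \Rightarrow (1)$, using the already-established equivalence $(1) \Leftrightarrow (4)$ from Theorem~\ref{thm:binary_tight_3m_minor_char}. The implication $(1) \Rightarrow (2)$ is immediate from Lemma~\ref{lem:even_skew_pairs} applied to circuits of $Z$ (which are, in particular, cycles of $Z$), and $(2) \Rightarrow (3)$ is trivial since $3$ is odd. The nontrivial direction is $(3) \Rightarrow (4)$, which I would prove by contrapositive.

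Assume $Z$ has a minor $Z|X$ isomorphic to $\mathscr{H}_{3,3}$. The first step is to exhibit two circuits $C_1, C_2$ of $\mathscr{H}_{3,3}$ with $|\mathrm{Sc}(C_1 \cup C_2)| = 3$. Since every circuit of $\mathscr{H}_{3,3}$ is a transversal, any two distinct circuits $C_1, C_2$ satisfy $|\mathrm{Sc}(C_1 \cup C_2)| = 3 - |C_1 \cap C_2|$, and the multimatroid axiom forces $|C_1 \cap C_2| \neq 2$. Hence it suffices to exhibit two disjoint circuits, which is immediate from the discussion introducing $\mathscr{H}_{3,3}$: for any circuit $C_1$ of $\mathscr{H}_{3,3}$, the multimatroid $\mathscr{H}_{3,3} - C_1$ is isomorphic to $S_1$, so the three circuits of $S_1$ correspond to three circuits of $\mathscr{H}_{3,3}$ disjoint from $C_1$; take $C_2$ to be any of these.

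The second step is to transfer this back to $Z$. Via the isomorphism $Z|X \cong \mathscr{H}_{3,3}$, we obtain circuits $D_1, D_2$ of $Z|X$ with $|\mathrm{Sc}(D_1 \cup D_2)| = 3$. By the definition of $Z|X$, each $D_i$ is a circuit of $Z[T_i \cup X]/X$ for some transversal $T_i$ of the skew classes disjoint from $X$, and hence by the standard description of circuits under matroid contraction there is a circuit $D_i' \in \mathcal{C}(Z[T_i \cup X]) \subseteq \mathcal{C}(Z)$ with $D_i' \setminus X = D_i$. For any skew class $\omega$ disjoint from $X$ one has $\omega \cap D_i' = \omega \cap D_i$; and for any skew class $\omega$ with $\omega \cap X = \{x\}$, the inclusion $D_i' \subseteq T_i \cup X$ combined with $\omega \cap T_i = \emptyset$ forces $\omega \cap (D_1' \cup D_2') \subseteq \{x\}$, contributing no skew pair. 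Therefore $|\mathrm{Sc}(D_1' \cup D_2')| = |\mathrm{Sc}(D_1 \cup D_2)| = 3$, witnessing the failure of $(3)$ in $Z$ and completing the contrapositive.

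The main obstacle is this lifting step: one has to verify that the skew-pair structure of a pair of circuits is preserved when passing from a minor back to $Z$. The crucial input is that a subtransversal meets each skew class in at most one element, which prevents skew classes touching $X$ from generating new skew pairs in the lifted circuits.
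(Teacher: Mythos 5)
Your proof is correct and follows essentially the same route as the paper: implication $(1)\Rightarrow(2)$ via Lemma~\ref{lem:even_skew_pairs}, $(2)\Rightarrow(3)$ trivially, $(3)\Rightarrow(4)$ by lifting a pair of disjoint circuits of $\mathscr{H}_{3,3}$ back to circuits of $Z$ whose union carries exactly three skew pairs, and $(4)\Rightarrow(1)$ by invoking Theorem~\ref{thm:binary_tight_3m_minor_char}. Your lifting argument is laid out in more detail than in the paper (in particular, the observation that a skew class meeting $X$ contributes no new skew pair because $D_i'\subseteq T_i\cup X$ and $T_i$ avoids that class), and your citation of Lemma~\ref{lem:even_skew_pairs} for $(1)\Rightarrow(2)$ is actually the correct one — the paper's reference to Lemma~\ref{lem:sum_cycles} there appears to be a misprint.
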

\begin{Proof}
By Lemma~\ref{lem:sum_cycles}, Statement~\ref{stat:bin_3m} implies Statement~\ref{stat:tot_isotropic}. Statement~\ref{stat:tot_isotropic} implies Statement~\ref{stat:seymour} trivially. Assume now that Statement~\ref{stat:seymour} holds and assume to the contrary that $Z|X$ is isomorphic to $\mathscr{H}_{3,3}$ for some subtransversal $X$ of $Z$. Note that $\mathscr{H}_{3,3}$ has circuits $C_1$ and $C_2$ that are mutually disjoint. Moreover, all circuits of $\mathscr{H}_{3,3}$ are of cardinality $3$. Thus $C_1 \cup C_2$ contains precisely $3$ skew pairs. Hence, $Z|X$ contains circuits $C'_1$ and $C'_2$ such that $C'_1 \cup C'_2$ contains precisely $3$ skew pairs. By the definition of minor, $C'_1 \cup X_1$ and $C'_2 \cup X_2$ are circuits of $Z$ for some $X_1,X_2 \subseteq X$. Since $(C'_1 \cup X_1) \cup (C'_2 \cup X_2)$ contains precisely $3$ skew pairs, we obtain a contradiction. Finally, by Theorem~\ref{thm:binary_tight_3m_minor_char}, Statement~\ref{stat:no_H33_minor} implies Statement~\ref{stat:bin_3m}.
\end{Proof}

The following is shown in \cite[Proposition~17]{BT/IsotropicMatI} (which in turn is closely related to \cite[Property~5.2]{Bouchet_1991_67}).
\begin{proposition}[\cite{BT/IsotropicMatI}] \label{prop:IMI_implies_strongly_bin}
Every binary tight $2$-matroid is strongly binary.
\end{proposition}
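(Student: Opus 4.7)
The plan is to invoke Bouchet's excluded-minor characterization of strongly binary 2-matroids (Proposition~\ref{prop:bouchet_excl_minor}) and show that each of the five excluded minors $S_1,\ldots,S_5$ fails to be a binary tight 2-matroid. Since both tightness (by Proposition~\ref{prop:char_tight}) and binary representability via sheltering (noted in Section~\ref{sec:sh_mm_ortho}) are preserved under taking minors, a binary tight 2-matroid $Z$ cannot have any $S_i$ as a minor, so Proposition~\ref{prop:bouchet_excl_minor} forces $Z$ to be strongly binary.

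First I would verify that $S_1$, $S_2$, $S_3$ are not tight by exhibiting a subtransversal $S$ of size $|\Omega|-1=2$ at which Definition~\ref{def:tight} fails. For $S_1$, take $S=\{1_a,2_a\}$, which is disjoint from $\omega_3$; neither $S\cup\{3_a\}$ nor $S\cup\{3_b\}$ contains any circuit of $\mathcal{C}_1$ (each of the three circuits contains $1_b$ or $2_b$), so $\mathcal{C}(S_1[S\cup\{x\}])=\mathcal{C}(S_1[S])=\emptyset$ for both $x\in\omega_3$. Analogous witnesses $S=\{1_b,2_b\}$ for $S_2$ and $S=\{1_a,2_b\}$ for $S_3$ work in the same way. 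Since minors of tight multimatroids are tight, none of $S_1$, $S_2$, $S_3$ can occur as a minor of the tight 2-matroid $Z$.

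Next I would show that $S_4$ and $S_5$ are not binary. In both cases, inspection of the circuit families $\mathcal{C}_4$ and $\mathcal{C}_5$ shows that the transversal $T=\{1_a,2_a,3_a,4_a\}$ has every $3$-subset of $T$ as a circuit of $S_i[T]$; hence $S_i[T]$ is the uniform matroid $U_{2,4}$, a classical excluded minor for binary matroids. Any matroid $M$ sheltering $S_i$ must satisfy $M[T]=S_i[T]\cong U_{2,4}$, so $M$ cannot be binary. Therefore $S_4$ and $S_5$ are not binary, and since binary representability also passes to minors, they cannot be minors of the binary 2-matroid $Z$. Combining the two steps, no $S_i$ is a minor of $Z$, and Proposition~\ref{prop:bouchet_excl_minor} delivers the conclusion. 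The proof presents essentially no technical difficulty once Bouchet's excluded-minor theorem is used as a black box; the only care required is in picking the right witness in each excluded minor, with the appearance of $U_{2,4}$ inside $S_4$ and $S_5$ being the cleanest obstruction.
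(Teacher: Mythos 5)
Your proof is correct. The paper cites Proposition~\ref{prop:IMI_implies_strongly_bin} from an external reference without reproving it, so there is no internal proof to compare against, but your argument is a clean, self-contained derivation that stays inside the paper's own toolkit. Invoking Bouchet's excluded-minor theorem (Proposition~\ref{prop:bouchet_excl_minor}) and ruling out each $S_i$ by showing it violates one of two minor-hereditary properties (tightness, or being sheltered by a binary matroid) is exactly the right structure. Your non-tightness witnesses for $S_1$, $S_2$, $S_3$ check out under Definition~\ref{def:tight}, and this matches the observation the paper itself makes inside the proof of Theorem~\ref{thm:char_binary_tight_2m} (``$S_1$, $S_2$ and $S_3$ are not tight''). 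Your observation that $S_4$ and $S_5$ restrict to $U_{2,4}$ on the transversal $\{1_a,2_a,3_a,4_a\}$ is accurate: for $S_4$ the four size-3 circuits among $\{1_a,\dots,4_a\}$ and for $S_5$ the circuits $T_a\setminus\{i_a\}$ are precisely the $3$-subsets, so $S_i[\{1_a,\dots,4_a\}]\cong U_{2,4}$; since any sheltering matroid $M$ satisfies $M[T]=Z[T]$ on transversals, no such $M$ can be binary. (For $S_5$ the paper gets the same conclusion from the identification $S_5\cong\mathcal{Z}_{U_{2,4}}$ together with the remark in Section~\ref{sec:sh_mm_ortho} that $\mathcal{Z}_M$ is representable over $\mathbb{F}$ iff $M$ is; your direct restriction argument also handles $S_4$, which the paper treats elsewhere via tight extendability rather than binarity.) One small gain of your route is that it makes the implication Statement~1 $\Rightarrow$ Statement~2 of Theorem~\ref{thm:char_binary_tight_2m} self-contained, whereas the paper's proof of that implication leans on the external citation of Proposition~\ref{prop:IMI_implies_strongly_bin}.
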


This leads to a characterization of binary tight $2$-matroids as well.
\begin{theorem} \label{thm:char_binary_tight_2m}
Let $Z$ be a tight $2$-matroid. Then the following statements are equivalent.
\begin{enumerate}
\item \label{stat:binttwomat} $Z$ is binary,

\item \label{stat:binttwomat_strong} $Z$ is strongly binary,

\item \label{stat:binttwomat_even} for all circuits $C$ and $C'$ of $Z$, $C \cup C'$ contains an even number of skew pairs,

\item \label{stat:binttwomat_not_three} for all circuits $C$ and $C'$ of $Z$, $C \cup C'$ does not contain precisely three skew pairs, and

\item \label{stat:binttwomat_minor} no minor of $Z$ is isomorphic to $S_4$ or $S_5$.
\end{enumerate}
\end{theorem}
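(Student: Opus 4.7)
I would prove the cycle $(1) \Rightarrow (3) \Rightarrow (4) \Rightarrow (5) \Rightarrow (2)$ and combine it with the equivalence $(1) \Leftrightarrow (2)$ supplied by Proposition~\ref{prop:IMI_implies_strongly_bin} (the converse of which is immediate, since strong representability over $GF(2)$ entails representability over $GF(2)$). The implication $(3) \Rightarrow (4)$ is trivial, so only three implications require real work. For $(1) \Rightarrow (3)$, having passed through $(2)$, I would extend $Z$ to the binary tight $3$-matroid $Z^+ = \mathcal{Z}_A$ described in the paragraph just before Proposition~\ref{prop:bouchet_excl_minor}, so that $Z = Z^+ - T_3$. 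The circuits of $Z$ are exactly the circuits of $Z^+$ disjoint from $T_3$, so Lemma~\ref{lem:even_skew_pairs} applied inside $Z^+$ gives that $|\mathrm{Sc}_{Z^+}(C \cup C')|$ is even for any two such circuits. A quick check shows that for each skew class $\omega^+$ of $Z^+$, the only possible skew pair of $\omega^+$ inside $C \cup C' \subseteq U \setminus T_3$ is $\omega^+ \setminus T_3$, which is precisely the corresponding skew class of $Z$; hence $|\mathrm{Sc}_Z(C \cup C')| = |\mathrm{Sc}_{Z^+}(C \cup C')|$ and $(3)$ follows.

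For $(4) \Rightarrow (5)$ I would argue the contrapositive, following the template of the $3$-matroid version inside the proof of Theorem~\ref{thm:chars_bintight3m}. In $S_5$ the circuits $\{1_a, 2_a, 3_a\}$ and $\{1_b, 2_b, 3_b\}$ have union containing precisely three skew pairs, and in $S_4$ the circuits $\{1_a, 2_a, 3_a\}$ and $\{1_b, 2_b, 3_b, 4_a\}$ do likewise. If $Z|X \cong S_i$ for some subtransversal $X$ and $i \in \{4,5\}$, then by the minor definition these two circuits of $Z|X$ lift to circuits $D_j = C_j \cup X_j$ of $Z$ with $X_j \subseteq X$. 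The crucial step is to confirm that a skew class $\omega$ of $Z$ with $\omega \cap X \neq \emptyset$ cannot contribute a skew pair to $D_1 \cup D_2$: its remaining element $y \in \omega \setminus X$ lies in $U \setminus \bigcup \Omega_{Z|X}$, hence in neither the $C_j$'s (which sit inside $\bigcup \Omega_{Z|X}$) nor the $X_j$'s (which sit inside $X$), while $|\omega \cap X| = 1$ leaves at most one element of $\omega$ in $D_1 \cup D_2$. Thus $|\mathrm{Sc}_Z(D_1 \cup D_2)| = |\mathrm{Sc}_{Z|X}(C_1 \cup C_2)| = 3$, contradicting~$(4)$.

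Finally, for $(5) \Rightarrow (2)$ I would appeal to Proposition~\ref{prop:bouchet_excl_minor} and verify that the remaining three excluded minors are non-tight: for $S_2$, the subtransversal $\{1_b, 2_b\}$ produces an order-$1$ minor with no circuit, so tightness fails by condition~\ref{item:char_tight_emptyOne} of Proposition~\ref{prop:char_tight}, and $S_1$ and $S_3$ are disposed of by analogous one-line computations. Since tightness of $Z$ is inherited by its minors, $Z$ cannot have $S_1, S_2, S_3$ as minors; combined with hypothesis~$(5)$, $Z$ has no $S_i$ minor at all and is therefore strongly binary. The main technical obstacle is the skew-pair bookkeeping in $(4) \Rightarrow (5)$ (with the slightly easier analogue in $(1) \Rightarrow (3)$); everything else is either a direct appeal to an existing result or a routine hand computation.
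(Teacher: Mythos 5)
Your proof is correct and follows essentially the same route as the paper's: $(1)\Leftrightarrow(2)$ via Proposition~\ref{prop:IMI_implies_strongly_bin}, $(2)\Rightarrow(3)$ by passing to a binary tight $3$-extension and invoking the even-skew-pair property there, $(3)\Rightarrow(4)$ trivially, $(4)\Rightarrow(5)$ by exhibiting two circuits of $S_4$ (resp.\ $S_5$) whose union has exactly three skew pairs and lifting them, and $(5)\Rightarrow(2)$ by noting $S_1,S_2,S_3$ are not tight and applying Proposition~\ref{prop:bouchet_excl_minor}. The only cosmetic difference is that you appeal directly to Lemma~\ref{lem:even_skew_pairs} where the paper cites the packaged statement of Theorem~\ref{thm:chars_bintight3m}, and you spell out the bookkeeping showing that skew-pair counts transfer between $Z^+$ and $Z$, and between $Z|X$ and $Z$, which the paper leaves implicit.
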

\begin{Proof}
Statement~\ref{stat:binttwomat_strong} trivially implies Statement~\ref{stat:binttwomat}. Conversely, if the tight $2$-matroid $Z$ is binary, then $Z$ is strongly binary by Proposition~\ref{prop:IMI_implies_strongly_bin}. Hence Statement~\ref{stat:binttwomat} implies Statement~\ref{stat:binttwomat_strong}.

Assume Statement~\ref{stat:binttwomat_strong} holds, i.e., $Z$ is strongly binary. By the beginning of this section, $Z$ has a tight extension $Z'$ that is binary. Since $Z'$ is a binary tight $3$-matroid, we have by Theorem~\ref{thm:chars_bintight3m} that, for all circuits $C_1$ and $C_2$ of $Z$, $C_1 \cup C_2$ contains an even number of skew pairs. Consequently, this also holds for $Z$. Therefore, Statement~\ref{stat:binttwomat_even} holds.

Statement~\ref{stat:binttwomat_even} trivially implies Statement~\ref{stat:binttwomat_not_three}.

Statement~\ref{stat:binttwomat_not_three} implies Statement~\ref{stat:binttwomat_minor}, since each of $S_4$ and $S_5$ has circuits $C$ and $C'$ such that $C \cup C'$ contains precisely three skew pairs.

Assume Statement~\ref{stat:binttwomat_minor} holds. Observe that 2-matroids $S_1$, $S_2$ and $S_3$ are not tight. Since $Z$ is tight and tightness is preserved under taking minors, no minor of $Z$ is isomorphic to $S_1$, $S_2$ or $S_3$. Since also no minor of $Z$ is isomorphic to $S_4$ or $S_5$, we obtain by Proposition~\ref{prop:bouchet_excl_minor} that $Z$ is strongly binary. Hence, Statement~\ref{stat:binttwomat_strong} holds.
\end{Proof}

Since $\mathcal{Z}_M$ is a tight 2-matroid for every matroid $M$, we obtain as a special case of Theorem~\ref{thm:char_binary_tight_2m} the following well-known characterizations of binary matroids, see \cite{Seymour/CocircThree} and, e.g., \cite[Theorems~9.1.2(ii) and 9.1.3(ii)]{Oxley/MatroidBook-2nd}.
\begin{corollary} \label{cor:chars_bintight3m}
Let $M$ be a matroid. Then the following statements are equivalent.
\begin{enumerate}
\item \label{stat:bin_mat} $M$ is binary,

\item \label{stat:cocirc_even} for every circuit $C$ and cocircuit $C^*$ of $M$, $|C \cap C^*|$ is even, and

\item \label{stat:cocirc_not_three} for every circuit $C$ and cocircuit $C^*$ of $M$, $|C \cap C^*| \neq 3$.

%\item no minor of $M$ is isomorphic to $U_{2,4}$.
\end{enumerate}
\end{corollary}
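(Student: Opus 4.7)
The plan is to derive this corollary by applying Theorem~\ref{thm:char_binary_tight_2m} to the tight 2-matroid $\mathcal{Z}_M$. Recall from Section~\ref{sec:sh_mm_ortho} that $\mathcal{Z}_M$ is always tight, and that $M$ is representable over $GF(2)$ if and only if $\mathcal{Z}_M$ is. So the equivalence of Statement~\ref{stat:bin_mat} in the corollary with Statement~\ref{stat:binttwomat} of Theorem~\ref{thm:char_binary_tight_2m} applied to $\mathcal{Z}_M$ is immediate.

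The next step is to translate the skew-pair conditions of Theorem~\ref{thm:char_binary_tight_2m} into conditions about $|C \cap C^*|$. Since $\mathcal{Z}_M$ is the free sum of $(M^*,M)$, every circuit of $\mathcal{Z}_M$ is either of the form $\varphi_1(C^*)$ for some cocircuit $C^*$ of $M$ or of the form $\varphi_2(C)$ for some circuit $C$ of $M$. The skew classes of $\mathcal{Z}_M$ are the pairs $\omega_e = \{\varphi_1(e),\varphi_2(e)\}$ for $e \in E(M)$. I would then observe that if two circuits of $\mathcal{Z}_M$ lie in the same transversal (both in $\varphi_1(E(M))$ or both in $\varphi_2(E(M))$), then their union contains no skew pair at all, so the parity condition is automatic. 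Consequently, the non-trivial cases are exactly the pairs of the form $C_1 = \varphi_1(C^*)$, $C_2 = \varphi_2(C)$; for such a pair, the number of skew pairs contained in $C_1 \cup C_2$ equals $|\{e \in E(M) \mid \varphi_1(e) \in C_1 \text{ and } \varphi_2(e) \in C_2\}| = |C \cap C^*|$.

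Combining these translations with Theorem~\ref{thm:char_binary_tight_2m}, Statement~\ref{stat:binttwomat_even} becomes ``$|C \cap C^*|$ is even for all circuits $C$ and cocircuits $C^*$ of $M$'', i.e., Statement~\ref{stat:cocirc_even}, and Statement~\ref{stat:binttwomat_not_three} becomes ``$|C \cap C^*| \neq 3$ for all circuits $C$ and cocircuits $C^*$ of $M$'', i.e., Statement~\ref{stat:cocirc_not_three}. This establishes all three equivalences.

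The main (though entirely routine) obstacle is the bookkeeping that identifies circuits of $\mathcal{Z}_M$ with circuits and cocircuits of $M$ and that identifies cardinalities of intersections with numbers of skew pairs; this is essentially the same translation already performed in the proof of Corollary~\ref{cor:bin_mat_even_inters}, and can be quoted almost verbatim. No new combinatorial work is needed beyond invoking Theorem~\ref{thm:char_binary_tight_2m}.
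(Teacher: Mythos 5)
Your proof is correct and takes essentially the same approach as the paper: both reduce the corollary to Theorem~\ref{thm:char_binary_tight_2m} applied to the tight $2$-matroid $\mathcal{Z}_M$, using the observation that circuits of $\mathcal{Z}_M$ are exactly the $\varphi_1$-images of cocircuits and $\varphi_2$-images of circuits of $M$, so that skew-pair counts in circuit unions correspond to intersection cardinalities. The only (cosmetic) difference is that you translate all three statements uniformly and invoke the theorem once, whereas the paper proves $(1)\Rightarrow(2)$ separately via Corollary~\ref{cor:bin_mat_even_inters} and only uses the theorem for $(3)\Rightarrow(1)$.
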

\begin{Proof}
Statement~\ref{stat:bin_mat} implies Statement~\ref{stat:cocirc_even} by Corollary~\ref{cor:bin_mat_even_inters}. Statement~\ref{stat:cocirc_even} implies Statement~\ref{stat:cocirc_not_three} trivially. Assume Statement~\ref{stat:cocirc_not_three} holds. To show that $M$ is binary, we show that $\mathcal{Z}_M$ is binary. Let $C_1$ and $C_2$ be circuits of $\mathcal{Z}_M$. By definition of $\mathcal{Z}_M$, each of $C_1$ and $C_2$ is either a subset of $\varphi_1(E(M))$ or a subset of $\varphi_2(E(M))$, where $\varphi_i$ is as in Definition~\ref{def:free_sum}. By Theorem~\ref{thm:char_binary_tight_2m}, it suffices to show that $C_1 \cup C_2$ does not contain precisely three skew pairs. Assume to the contrary that $C_1 \cup C_2$ contains precisely three skew pairs. In particular, $C_1 \cup C_2$ contains at least one skew pair, and so $C_1$ and $C_2$ are not subsets of the same $\varphi_i(E(M))$. Without loss of generality, assume that $C_1 \subseteq \varphi_1(E(M))$ and $C_2 \subseteq \varphi_2(E(M))$. So $C^* = \varphi_1^{-1}(C_1)$ is a cocircuit of $M$ and $C = \varphi_2^{-1}(C_2)$ is a circuit of $M$. Since $C_1 \cup C_2$ contains precisely three skew pairs, we have $|C \cap C^*| = 3$. This contradicts Statement~\ref{stat:cocirc_not_three}.
\end{Proof}

%\paragraph{Acknowledgements}
\subsection*{Acknowledgements}
We thank anonymous referees for their helpful comments on an earlier version of this paper.
%R.B.\ is a postdoctoral fellow of the Research Foundation -- Flanders (FWO).

% For arxiv:
\begingroup
\setlength{\emergencystretch}{8em}
\printbibliography
\endgroup

%% For journal:
%\section*{References}
%\bibliography{mmatroids}

%%%%%%%%%%%%%%%%%%%%%%%%%%%%%%%%%%%%%%%%%%%%%%%%%%%%%%%%%%%%%%%%%%%
%%%%%%%%%%%%%%%%%%%%%%%%%%%%%%%%%%%%%%%%%%%%%%%%%%%%%%%%%%%%%%%%%%%
%%%%%%%%%%%%%%%%%%%%%%%%%%%%%%%%%%%%%%%%%%%%%%%%%%%%%%%%%%%%%%%%%%%

\end{document}